\documentclass[reqno,english]{amsart}
\usepackage{amsfonts,amsmath,latexsym,verbatim,amscd,mathrsfs,color,array}
\usepackage[colorlinks=true]{hyperref}
\usepackage{amssymb,amsthm,graphicx,color}
\usepackage[hmargin=3cm, vmargin=3cm]{geometry}
\usepackage{bbm}
\usepackage{amssymb}
\usepackage{pdfsync}
\usepackage{epstopdf}
\usepackage{cite}
\usepackage{graphicx}
\usepackage{multirow}
\usepackage[font=bf,aboveskip=15pt]{caption}
\usepackage[toc,page]{appendix}
\usepackage[colorlinks=true]{hyperref}
\raggedbottom
\usepackage{bookmark}
\DeclareFontShape{OT1}{cmr}{bx}{sc}{<-> cmbcsc10}{}
\setcounter{tocdepth}{6}
\setcounter{secnumdepth}{6}

\usepackage{float}
\usepackage{ulem}
\usepackage{syntonly}
\usepackage{mathtools}
\usepackage{bm}
\usepackage{amsfonts,amsmath,latexsym,verbatim,amscd,mathrsfs,color,array}
\usepackage[colorlinks=true]{hyperref}

\usepackage{amsmath,amssymb,amsthm,amsfonts,graphicx,color}
\usepackage{amssymb}
\usepackage{pdfsync}
\usepackage{epstopdf}
\usepackage{upgreek}






\newcommand{\cuad}{{\sqcap\kern-.68em\sqcup}}

\newcommand{\be}{\begin{equation}}
\newcommand{\ee}{\end{equation}}

\newtheorem{Lemma}{Lemma}[section]
\newtheorem{Proposition}{Proposition}[section]
\newtheorem{Theorem}{Theorem}

\newtheorem{remark}{Remark}[section]
\newtheorem{exmp}{Example}[section]
\newcommand{\bremark}{\begin{remark} \em}
	\newcommand{\eremark}{\end{remark} }

\numberwithin{equation}{section}

\begin{document}
\title{ Sub-solutions and a point-wise Hopf's Lemma for Fractional p-Laplacian}

\author[Z. Li]{ Zaizheng Li}
\address{\noindent 
	Department of Mathematics, Hebei Normal University, Shijiazhuang 050024, China; Department of Mathematical Sciences, Yeshiva University}
\email{lizaizheng@amss.ac.cn}

\author[Q. Zhang]{Qidi Zhang}
\address{\noindent
	Department of Mathematics,
	University of British Columbia, Vancouver, B.C., V6T 1Z2, Canada}
\email{qidi@math.ubc.ca}
\begin{abstract}
	We prove a Hopf's lemma in the point-wise sense. The essential technique is to prove $(-\Delta)^s_p u(x)$ is uniformly bounded in the unit ball $B_1\subset\mathbb{R}^n$, where $u(x)=(1-|x|^2)^s_{+}$.  Also we study the global H\"older continuity of bounded positive solutions for $(-\Delta)^s_p u(x)=f(x,u).$
	
	{\noindent\emph{\textbf{Keywords:}}
		Fractional p-Laplacian operator; Sub-solution; Hopf's lemma; H\"older regularity}
\end{abstract}
\maketitle

\tableofcontents	
\section{Introduction and  main results}
Fractional p-Laplacian	operator $(-\Delta)^s_p$ is a non-local operator, which is of the form
\begin{equation*}
\begin{aligned}
(-\Delta)^s_pu(x)&=C_{n,s,p}P.V.\int_{\mathbb{R}^n}\frac{|u(x)-u(y)|^{p-2}[u(x)-u(y)]}{|x-y|^{n+sp}}\mathrm{d}y\\
&=C_{n,s,p}\lim_{\epsilon\rightarrow0}\int_{\mathbb{R}^n\backslash B_\epsilon(x)}\frac{|u(x)-u(y)|^{p-2}[u(x)-u(y)]}{|x-y|^{n+sp}}\mathrm{d}y,
\end{aligned}
\end{equation*}
where $p\geq2,$ $s\in(0,1),$ $C_{n,s,p}$ is a constant and P.V. is the Cauchy's principal value.
The fractional p-Laplacian operator is an extension version of fractional Laplacian ($p=2$).	
In order that the integral on the right hand is well defined, we require  $$u\in C^{1,1}_{loc}\cap \mathcal{L}_{sp},$$ where $$\mathcal{L}_{sp}:=\left\{u\in L^{p-1}_{loc}\Big\vert\int_{\mathbb{R}^n} \frac{|1+u(x)|^{p-1}}{1+|x|^{n+sp}}<\infty\right\}.$$

In recent years, the non-local operators arise from many fields, such as game theory, finance, L\'evy processes, and optimization, see \cite{alberti1998, Sire2008, Bogdan_2010, bjorland2012, hitoshi2010, chen2020, Chen2018symmetry} and  references therein. In the special case $p=2,$  Luis Caffarelli and Luis Silvestre in \cite{caffarelli2007extension} introduce an extension method which turns the non-local operator into a local one in higher dimensions. Luis Silvestre  in\cite{silvestre2007regularity}, Ros-oton and Serra in\cite{ros2014dirichlet} discuss about the regularity of solutions for equations involving the fractional Laplacian. Wenxiong Chen, Congming Li and Yan Li in\cite{chen2017direct}, Wenxiong Chen, Congming Li and Shijie Qi in\cite{Chen2017direct2} develop  direct methods of moving planes and moving spheres. There are some explicit solutions for $p=2,$ for example, one can find $(-\Delta)^su(x)=const$ for $x\in B_1,$ where \begin{equation*}
u(x)=(1-|x|^2)^s_{+}=\begin{cases}
(1-|x|^2)^s,& |x|<1;\\
0,& |x|\geq1.
\end{cases}
\end{equation*} in  \cite{getoor1961first} and $(-\Delta)^s(x_n)^s_{+}=0,$ \begin{equation*}
(x_n)^s_{+}=\begin{cases}
x_n^s,& x_n>0;\\
0,& x_n\leq0.
\end{cases}
\end{equation*} in the upper half space  $\mathbb{R}^+_n$ in \cite{iannizzotto2014global}. We refer the reader to \cite{chen2020book, di2011hitchhiker} and references therein for more related results.

When $p>2$, $(-\Delta)^s_p$ is a non-local and nonlinear operator. In this case, $(-\Delta)^s_p (x_n)^s_{+}=0$ in the upper half space  $\mathbb{R}^+_n$ still holds, see \cite{iannizzotto2014global}. Our first result is the following sub-solutions result.
\begin{Theorem}[Sub-solutions]\label{thm 1}	
	Let $s\in(0,1),$ $p>2,$  $n\in\mathbb{N}^*,$ $u(x)=(1-|x|^2)^s_+,$ then $(-\Delta)^s_p u(x)$ is uniformly bounded in the unit ball $B_1\subset\mathbb{R}^n$.
\end{Theorem}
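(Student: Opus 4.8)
The plan is to compare $u$ with the half-space model one obtains by ``freezing'' the geometry of $B_1$ at the evaluation point. Fix $x\in B_1$ and put $d:=1-|x|^2>0$. Since $1-|y|^2=\ell(y)-|y-x|^2$ with $\ell(y):=d-2x\cdot(y-x)$ affine, one has $u(y)=(\ell(y)-|y-x|^2)_+^s$, and I introduce the model $w(y):=(\ell(y))_+^s$. As $\ell$ is affine with $\ell(x)=d>0$, the set $\{\ell>0\}$ is a half-space and $w$ equals, up to a positive multiplicative constant and a rigid motion, the function $(x_n)^s_+$; hence by the dilation- and rigid-motion-invariance of $(-\Delta)^s_p$ together with the identity $(-\Delta)^s_p(x_n)^s_+=0$ recalled in the introduction, $(-\Delta)^s_p w(x)=0$. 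Moreover $w\in\mathcal L_{sp}$ and $w$ is smooth near $x$, so this is a genuine principal value, and
\[
(-\Delta)^s_p u(x)=(-\Delta)^s_p u(x)-(-\Delta)^s_p w(x)=C_{n,s,p}\int_{\mathbb R^n}\frac{g(A(y))-g(B(y))}{|x-y|^{n+sp}}\,dy,
\]
where $g(t):=|t|^{p-2}t$, $A:=u(x)-u(y)$, $B:=w(x)-w(y)$; no principal value is needed any longer, because the facts $u(x)=w(x)=d^s$ and $\nabla u(x)=\nabla w(x)=-2sd^{s-1}x$ make the singular leading parts of $g(A)$ and $g(B)$ cancel near $y=x$. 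In particular $A-B=w(y)-u(y)=:D(y)\ge0$, with $0\le D(y)\le|y-x|^{2s}$ for all $y$ (the $s$-Hölder bound for $t\mapsto t^s_+$), and using $|g(a)-g(b)|\le(p-1)(|a|+|b|)^{p-2}|a-b|$ it suffices to bound
\[
\int_{\mathbb R^n}\frac{\bigl(|A(y)|+|B(y)|\bigr)^{p-2}D(y)}{|x-y|^{n+sp}}\,dy
\]
by a constant independent of $x$.

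For $|x|\le\tfrac12$ this is the standard interior estimate (and $w$ is not even needed): on $B_{1/4}(x)\subset\subset B_1$ the function $u$ is smooth with absolutely bounded $C^{1,1}$ norm, the principal value over $B_{1/4}(x)$ is controlled by Taylor expansion and the oddness of $t\mapsto g(\nabla u(x)\cdot t)$ (the remainder integrand being $O(|y-x|^{p-n-sp})$, integrable since $p(1-s)>0$), and the outer part is $\le 2^{p-1}\int_{|y-x|\ge 1/4}|x-y|^{-n-sp}\,dy<\infty$. So assume $|x|\ge\tfrac12$, hence $0<d\le\tfrac34$, and split $\mathbb R^n=\mathcal A\cup\mathcal B\cup\mathcal C$ with $\mathcal A=\{|y-x|\le d/4\}$, $\mathcal B=\{d/4\le|y-x|\le 1\}$, $\mathcal C=\{|y-x|\ge 1\}$. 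On $\mathcal C$ one has $u(x),u(y)\le1$, $w(y)\le C(1+|y|)^s\le C|y-x|^s$ and $D(y)\le w(y)$, so the integrand is $\le C|y-x|^{s(p-1)-n-sp}=C|y-x|^{-n-s}$ and the contribution is finite, uniformly in $x$. On $\mathcal A$ both $1-|y|^2$ and $\ell(y)$ stay comparable to $d$, so $u$ and $w$ are smooth there with $|\nabla u|,|\nabla w|\le Cd^{s-1}$; since $\nabla u(x)=\nabla w(x)$, Taylor gives $|A|,|B|\le Cd^{s-1}|y-x|$, and the mean value theorem for $t\mapsto t^s$ gives $D(y)\le Cd^{s-1}|y-x|^2$; hence the integrand is $\le Cd^{(s-1)(p-1)}|y-x|^{p-n-sp}$, and integrating over $\mathcal A$ yields $Cd^{(s-1)(p-1)+p(1-s)}=Cd^{1-s}\le C$.

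The region $\mathcal B$ is the core of the matter. I would write it as a union of dyadic shells $S_\rho=\{\rho/2\le|y-x|\le\rho\}$, $d/4\le\rho\le1$ (so $d\le 4\rho$). On $S_\rho$ the affine function $\ell$ ranges over an interval of length $\sim\rho$ contained in $[-2\rho,6\rho]$, so $|A|+|B|\le 2d^s+2(\ell)_+^s\le C\rho^s$ and, more sharply,
\[
\bigl(|A|+|B|\bigr)^{p-2}\le C\bigl(\rho^{s(p-2)}+(\ell)_+^{s(p-2)}\bigr);
\]
retaining the $(\ell)_+$-dependence here is essential, because the crude bound $\le C\rho^{s(p-2)}$ is too lossy exactly where $\ell$, hence $D$, is of size $(\ell)_+^s$. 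I then split $S_\rho$ according to $\ell$: on $\{0<\ell\le2\rho^2\}$ one has $D\le\ell^s$ and $\ell^{s(p-2)}\le C\rho^{s(p-2)}$, while on $\{\ell>2\rho^2\}$ one has $\ell-|y-x|^2\ge\ell/2$ and hence $D\le C\ell^{s-1}\rho^2$. Integrating in $y$ by slicing along the level sets of $\ell$ --- whose intersections with $S_\rho$ have $\mathcal H^{n-1}$-measure $\le C\rho^{n-1}$, and $|\nabla\ell|\ge1$ --- one obtains in both cases
\[
\int_{S_\rho}\bigl(\rho^{s(p-2)}+(\ell)_+^{s(p-2)}\bigr)D(y)\,dy\le C\bigl(\rho^{n+1+sp}+\rho^{n+1+s(p-1)}\bigr)\le C\rho^{n+1+s(p-1)},
\]
so that $\int_{S_\rho}(|A|+|B|)^{p-2}D\,|x-y|^{-n-sp}\,dy\le C\rho^{1-s}$. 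Summing the geometric series over the dyadic scales $\rho\in\{1,\tfrac12,\dots\}\cap[d/4,1]$ --- which converges, with sum bounded independently of $d$, since $1-s>0$ --- bounds the contribution of $\mathcal B$ by an absolute constant. Collecting $\mathcal A,\mathcal B,\mathcal C$ together with the interior case gives $\sup_{B_1}|(-\Delta)^s_p u|<\infty$.

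The hard part is region $\mathcal B$ near $\partial B_1$, where $u$ is merely $s$-Hölder so that no single Taylor estimate applies. One must simultaneously: (i) subtract the flat model $w$ (whose fractional $p$-Laplacian vanishes) in order to cancel the non-integrable part of the kernel --- the cancellation $\nabla u(x)=\nabla w(x)$ is what makes this possible, and it also supplies the extra factor $|y-x|^{p-2}$ that saves region $\mathcal A$; (ii) use the refined sizes $D\le\ell^s_+$, $D\le|y-x|^{2s}$, and $D\le C\ell^{s-1}|y-x|^2$ (when $\ell\ge2|y-x|^2$) of the defect $w-u$; (iii) keep the factor $(\ell)_+^{s(p-2)}$ in $(|A|+|B|)^{p-2}$, without which the bound already fails for $p>2$; and (iv) exploit that the bad set $\{0<\ell\lesssim|y-x|^2\}$ is a thin slab, via the coarea/slicing bound with $|\nabla\ell|\sim1$. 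Everything else is bookkeeping of exponents.
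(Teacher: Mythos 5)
Your proposal is correct, but it takes a genuinely different route from the paper. The paper proceeds by direct expansion: for $n=1$ it decomposes $(-\Delta)^s_p u(x)$ near $|x|=1$ into the pieces $I_1,\dots,I_6$, isolates the coefficient of the singular factor $(1-x)^{-s}$, and shows that this coefficient vanishes via the integral identity (\ref{identity}); for $n\ge 2$ a fourteen-step reduction brings everything back to the same identity. You instead freeze the geometry at the evaluation point and compare $u$ with the calibrated half-space barrier $w=(\ell)_+^s$, whose fractional $p$-Laplacian vanishes at $x$ by the known fact $(-\Delta)^s_p(x_n)^s_+=0$ from \cite{iannizzotto2014global} (quoted in the introduction), and then estimate the absolutely convergent integral of $g(A)-g(B)$ using the first-order matching $u(x)=w(x)$, $\nabla u(x)=\nabla w(x)$, the defect bounds $D\le \ell_+^s$, $D\le|y-x|^{2s}$, $D\le C\ell^{s-1}|y-x|^2$, and the dyadic-shell/coarea argument; the exponent bookkeeping checks out ($Cd^{1-s}$ on $\{|y-x|\le d/4\}$, $C\rho^{1-s}$ per shell with $\sum_\rho \rho^{1-s}\le C$ since $s<1$, and $|y-x|^{-n-s}$ at infinity), and your scheme has the additional merit of treating all dimensions $n\ge 1$ simultaneously. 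The trade-off is that you outsource precisely the cancellation the paper proves by hand: identity (\ref{identity}) is nothing but the one-dimensional avatar of $(-\Delta)^s_p(x_n)^s_+=0$, so the paper's argument is self-contained and, moreover, exhibits the exact singular coefficient (which is what the authors emphasize), whereas yours leans on the cited half-space result; in exchange, your comparison-with-a-zero-mode argument is much shorter, explains structurally why the uniform bound holds (first-order matching plus an $s$-H\"older defect), and avoids the lengthy $n\ge2$ case analysis entirely.
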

	This is really the first time we are able to prove the uniform boundedness of $(-\Delta)^s_p u(x),$ where $ u(x)=(1-|x|^2)^s_{+}$, and this property plays an essential role in the proof of Hopf's lemma. It is well known that the Hopf's lemma is one of the most useful tool in the theory of partial differential equations. For $p=2,$ the proof is based on Fourier transform and hypergeometric functions. But Fourier transform does not work anymore due to the nonlinearity when $p>2$, and we could not find out any hypergeometric function to exploit in this case. In fact, before this theorem, people even do not know whether $(-\Delta)^s_p u(x)$ is uniformly bounded. Compared with the case $p=2,$ $(-\Delta)^s_p u(x)$ is not constant anymore when $p>2$ by numerical calculation (see Proposition \ref{fulu}). Our proof is based on rigorous analysis on the singular term, then we figure out the exact coefficient of the singular term and we prove the coefficient is  $0$.
		
Our second result is the following point-wise Hopf's lemma. 
\begin{Theorem}[Hopf]\label{thm 2}
	Let $\Omega$ be a bounded domain with the uniform interior ball condition. For  $u(x)\in C^{1,1}_{loc}(\Omega)\cap \mathcal{L}_{sp}(\mathbb{R}^n),$ $p>2,$ $s\in(0,1)$. Assume $u$ is lower semicontinuous in $\overline{\Omega}$ and pointwisely satisfies
	\begin{equation}
	\begin{cases}
	(-\Delta)^s_p u+c(x)u\geq 0, & x\in \Omega,\\
	u>0, & x\in \Omega,\\
	u=0, & x\in \mathbb{R}^n\setminus\Omega,
	\end{cases}
	\end{equation}
where $c(x)\geq
	0$ is bounded. Then there exists a constant $C=C(\Omega, u)>0$, such that \[\liminf_{x\rightarrow \partial\Omega}\frac{u(x)}{\left[dist(x,\partial\Omega)\right]^s}\geq C.\] 	   	
\end{Theorem}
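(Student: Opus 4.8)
The plan is a barrier argument in which Theorem~\ref{thm 1} supplies the only ingredient that is not classical. Throughout write $d(x):=\mathrm{dist}(x,\partial\Omega)$.

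\emph{Reduction to interior tangent balls and the barrier.} By the uniform interior ball condition there is $R>0$ such that every $x_0\in\partial\Omega$ has an associated $y_0\in\Omega$ with $B_R(y_0)\subset\Omega$ and $x_0\in\partial B_R(y_0)$; and for $x$ close enough to $\partial\Omega$, with $x_0$ its nearest boundary point, one has $x\in B_R(y_0)$ and $d(x)=R-|x-y_0|$. Hence it suffices to produce a constant $C>0$, \emph{independent of} $x_0$, with $u(x)\ge C\,(R-|x-y_0|)^s$ on $B_R(y_0)$; after a translation assume $y_0=0$ and set $\varphi(x)=(R^2-|x|^2)^s_+$. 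Since $\varphi(x)=R^{2s}w(x/R)$ with $w(z)=(1-|z|^2)^s_+$, the homogeneity of degree $p-1$ of $(-\Delta)^s_p$ in its argument together with its scaling give $(-\Delta)^s_p\varphi(x)=R^{s(p-2)}(-\Delta)^s_p w(x/R)$, so Theorem~\ref{thm 1} yields
\[
\bigl|(-\Delta)^s_p\varphi(x)\bigr|\ \le\ M\ :=\ R^{s(p-2)}\sup_{B_1}\bigl|(-\Delta)^s_p w\bigr|\ <\ \infty,\qquad x\in B_R.
\]
Finiteness of $M$ is exactly what Theorem~\ref{thm 1} buys us; a priori the singular part of the defining integral could make $(-\Delta)^s_p\varphi$ blow up near $\partial B_R$, and the argument below would then collapse.

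\emph{Bulk positivity and the set-up of the comparison.} For $\rho\in(0,R)$ the set $K_\rho:=\{x\in\Omega:d(x)\ge\rho\}$ is compact, $u>0$ on it, and $u$ is lower semicontinuous, so $m(\rho):=\inf_{K_\rho}u>0$ depends only on $\Omega$ and $u$; since $d(x)\ge R-|x|$ on $B_R\subset\Omega$ we have $B_{R-\rho}(0)\subset K_\rho$ and $\overline{B_{R/4}(0)}\subset K_{3R/4}$. Now, keeping $R$ fixed, choose $\rho\in(0,R)$ and $\eta>0$ small (to be specified), and claim $u\ge\eta\varphi$ on $B_R$. If not, $v:=u-\eta\varphi$ is lower semicontinuous on $\mathbb R^n$ and equals $u\ge0$ off $B_R$, so it attains a negative global minimum at some $\bar x\in B_R$; imposing $\eta R^{2s}<m(\rho)$ forces $v>0$ on $\overline{B_{R-\rho}(0)}$, hence $\bar x$ lies in the thin shell $B_R\setminus\overline{B_{R-\rho}(0)}$, where $\varphi(\bar x)\le(2R\rho)^s$ and $0<u(\bar x)<\eta\varphi(\bar x)$. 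Near $\bar x$ the function $v$ is $C^{1,1}$ with a minimum, so $\nabla v(\bar x)=0$ and $(-\Delta)^s_p(\eta\varphi)(\bar x)-(-\Delta)^s_p u(\bar x)$ is given by an absolutely convergent integral whose integrand is pointwise $\ge0$: indeed $r\mapsto|r|^{p-2}r$ is nondecreasing and $\eta\varphi(\bar x)-\eta\varphi(y)\ge u(\bar x)-u(y)$ for every $y$ by minimality.

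\emph{Conclusion of the comparison.} Discard everything except the contribution of $y\in B_{R/4}(0)$, where $u(y)\ge m(3R/4)$ and $\varphi(y)\ge(15/16)^sR^{2s}$; then (for $\rho$ small) the two increments $\eta\varphi(\bar x)-\eta\varphi(y)$ and $u(\bar x)-u(y)$ are both negative, the first has modulus $\gtrsim\eta R^{2s}$, and they differ by at least $m(3R/4)-\eta R^{2s}$. The convexity inequality $t_2^{p-1}-t_1^{p-1}\ge(p-1)t_1^{p-2}(t_2-t_1)$ for $0<t_1\le t_2$, applied to the moduli, then gives
\[
(-\Delta)^s_p(\eta\varphi)(\bar x)-(-\Delta)^s_p u(\bar x)\ \ge\ c_0\,\eta^{p-2}R^{s(p-4)}\bigl(m(3R/4)-\eta R^{2s}\bigr)
\]
for some $c_0=c_0(n,s,p)>0$. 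On the other hand the equation gives $(-\Delta)^s_p u(\bar x)\ge-c(\bar x)u(\bar x)>-\|c\|_\infty\eta\varphi(\bar x)\ge-\|c\|_\infty\eta(2R\rho)^s$, while $(-\Delta)^s_p(\eta\varphi)(\bar x)=\eta^{p-1}(-\Delta)^s_p\varphi(\bar x)\le\eta^{p-1}M$. Combining the three inequalities,
\[
c_0\,\eta^{p-2}R^{s(p-4)}\bigl(m(3R/4)-\eta R^{2s}\bigr)\ <\ \eta^{p-1}M+\|c\|_\infty\eta(2R\rho)^s .
\]
With $R$ fixed the left side is $\asymp\eta^{p-2}$, while the right side is $O(\eta^{p-1})+O(\rho^s)\,\eta$; choosing $\rho$ small and then $\eta$ small makes it false. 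Hence $u\ge\eta\varphi$ on $B_R$ with $\eta$ depending only on $\Omega,u,n,s,p$ and $\|c\|_\infty$, not on $x_0$. Finally, on $B_R(0)$, $\eta\varphi(x)=\eta(R-|x|)^s(R+|x|)^s\ge\eta R^s(R-|x|)^s$, and at $x$ near a boundary point $x_0$ one has $R-|x-y_0|=d(x)$, so $u(x)\ge\eta R^s d(x)^s$ and $\liminf_{x\to\partial\Omega}u(x)/d(x)^s\ge\eta R^s=:C>0$.

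\emph{Main obstacle.} The only delicate point is the previous paragraph. The naive comparison—at $\bar x$, $(-\Delta)^s_p u(\bar x)\le(-\Delta)^s_p(\eta\varphi)(\bar x)$—is useless, since $(-\Delta)^s_p\varphi$ need not be $\le0$. One must extract the strictly positive defect produced by the bulk values of $u$ and make it dominate both the zeroth-order term $c(x)u$ (which is what forces the use of a thin shell, so that $\varphi(\bar x)$ is small) and the term $\eta^{p-1}M$ (finite only by Theorem~\ref{thm 1}); because $r\mapsto|r|^{p-2}r$ degenerates at the origin for $p>2$, this defect scales like $\eta^{p-2}$ rather than $\eta$, so the smallnesses of $\rho$ and of $\eta$ must be fixed in that order.
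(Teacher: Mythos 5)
Your overall strategy is the same as the paper's: rescale the barrier $(R^2-|x|^2)^s_+$, use Theorem \ref{thm 1} plus the scalings $(-\Delta)^s_p(\lambda v)=\lambda^{p-1}(-\Delta)^s_p v$ and $v(\cdot/R)$ to get a finite bound $M$, and then beat the barrier error and the $c(x)u$ term by a strictly positive defect generated by the bulk values of $u$ (the paper realizes this through the auxiliary function $u_-=u\chi_D+\epsilon\psi_\varrho$ and Proposition \ref{bijiaoyuanli}; you re-prove the comparison by hand at the minimum point, which is fine). The gap is in your quantitative lower bound for the defect. At the minimum $\bar x$ you only keep the contribution of $y\in B_{R/4}$ and then \emph{linearize} at the smaller increment $t_2=\eta(\varphi(y)-\varphi(\bar x))\approx\eta R^{2s}$, which yields a defect of size $c_0\,\eta^{p-2}R^{s(p-4)}\bigl(m(3R/4)-\eta R^{2s}\bigr)$. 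Your final inequality then has to be falsified against $M\eta^{p-1}+\|c\|_\infty\eta(2R\rho)^s$, and you claim "choose $\rho$ small, then $\eta$ small" does it. Dividing by $\eta^{p-2}$, the $c$-term becomes $\|c\|_\infty(2R\rho)^s\eta^{3-p}$: for $2<p<3$ it vanishes as $\eta\to0$ and for $p=3$ it is handled by the prior choice of $\rho$, but for $p>3$ it \emph{blows up} as $\eta\to0$ with $\rho$ fixed, so no contradiction is reached; your defect $\asymp\eta^{p-2}$ is superlinear in $\eta$ while the zeroth-order error is linear in $\eta$. Reversing the order (choose $\eta$ first, then $\rho\lesssim\eta^{(p-3)/s}$) is not available either, because the localization of $\bar x$ in the thin shell needs $\eta R^{2s}<m(\rho)$, and nothing is known a priori about how fast $m(\rho)$ may decay as $\rho\to0$ — that decay rate is exactly what the theorem is supposed to control. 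So, as written, the argument closes only for $2<p\le 3$ (or when $c\equiv 0$), not for all $p>2$.

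The repair is to not linearize at the degenerate point: on $B_{R/4}$ bound the integrand directly by the difference of $(p-1)$-th powers,
\[
\bigl(u(y)-u(\bar x)\bigr)^{p-1}-\bigl(\eta\varphi(y)-\eta\varphi(\bar x)\bigr)^{p-1}\;\ge\;\bigl(m(3R/4)-\eta(2R\rho)^s\bigr)^{p-1}-\bigl(\eta R^{2s}\bigr)^{p-1}\;\ge\;\tfrac12\,m(3R/4)^{p-1}
\]
for $\eta$ small, which gives a defect bounded below by a constant \emph{independent of} $\eta$; this dominates both $M\eta^{p-1}$ and $\|c\|_\infty\eta(2R\rho)^s$ once $\eta$ is small, for every $p>2$, and it also makes the thin-shell localization (and hence the factor $\rho^s$) unnecessary. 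This is precisely the mechanism in the paper: Lemma \ref{dandiao}, $G(t)-G(s)\le 2^{2-p}G(t-s)$ for $t<s$, converts the bulk contribution of $u$ over $D$ into the fixed term $-C_{n,s,p}2^{2-p}C_D^{p-1}C(\varrho)$, against which $\frac{C_0\epsilon^{p-1}}{\varrho^{sp}}+\|c\|_\infty\epsilon$ is absorbed by taking $\epsilon$ small. With that single change your proof becomes correct and is essentially the paper's argument recast as a contradiction at a minimum point.
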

This is the first time to prove a point-wise Hopf's lemma for the fractional p-Laplacian. There are  some previous results about the Hopf's lemma. Wenxiong Chen and Congming Li in \cite{Chen2018Maximum} prove a boundary estimate for $(-\Delta)^s_p$, which is a key part in the moving plane method, and the boundary estimate plays the role of Hopf's lemma to some degree. Leandro M. Del pezzo and Alexander Quaas in \cite{del2017hopf}, Wenxiong Chen, Congming Li and Shijie Qi in \cite{chen2018hopf} prove a Hopf's lemma for $u\in\mathcal{W}^{s,p}(\Omega)$, where
$$\mathcal{W}^{s,p}(\Omega):=\{u\in L^p_{loc}(\mathbb{R}^n)\big\rvert \exists U\supset\supset \Omega, \quad such~that\\~ \iint\limits_{U\times U}\frac{|u(x)-u(y)|^p}{|x-y|^{n+sp}}\mathrm{d}x\mathrm{d}y<+\infty\}.$$  But $\mathcal{W}^{s,p}(\Omega)$ is different from $ C^{1,1}_{loc}(\Omega)\cap \mathcal{L}_{sp}(\mathbb{R}^n)$, see Example \ref{lizi}.

The third result is the following H\"older regularity of positive solutions for $(-\Delta)^s_p u(x)=f(x,u)$ in any domain(bounded or not).
\begin{Theorem}\label{thm3}
	Let  $\Omega$ be any domain (bounded or not) with the uniform two-sided ball condition, $s\in(0,1)$, $p\geq2$, and  $u\in C^{1,1}_{loc}(\Omega)\cap \mathcal{L}_{sp}(\mathbb{R}^n)$ is a bounded positive solution of 
	\begin{equation}
	\begin{cases}
	(-\Delta)^s_p u=f(x,u), & x\in \Omega,\\
	u=0, & x\in \mathbb{R}^n\setminus\Omega,
	\end{cases}
	\end{equation}
	where $f(x,u)$ is bounded. Then there exists a constant $\nu_0\in(0,s),$ such that 
	$u\in C^{\nu_0}(\mathbb{R}^n)$. Moreover,\[\left[u\right]_{C^{\nu_0}(\mathbb{R}^n)}\leq C(\nu_0)\left[1+\|u\|_{L^\infty(\Omega)}+C\|f\|^{\frac{1}{p-1}}_{L^\infty(\Omega)}\right].\]
\end{Theorem}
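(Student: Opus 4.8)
\section*{Proof proposal for Theorem \ref{thm3}}

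\textbf{Overall strategy.} The plan is to establish the global H\"older bound by comparison with explicit barriers built from the function studied in Theorem \ref{thm 1}, combined with interior regularity estimates in the spirit of De Giorgi--Nash--Moser for the fractional $p$-Laplacian. The key point is that Theorem \ref{thm 1} (together with the upper-half-space identity $(-\Delta)^s_p(x_n)^s_+=0$ recalled in the introduction) provides a function that is comparable to $[\dist(x,\partial\Omega)]^s$ near the boundary and whose fractional $p$-Laplacian is uniformly bounded. Since $\Om$ satisfies the uniform two-sided ball condition, at every boundary point there is an interior ball and an exterior ball of a fixed radius $\rho_0$; translating and rescaling $u(x)=(1-|x|^2)^s_+$ to these balls yields sub/super-solutions that pin down the boundary behavior of $u$.

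\textbf{Step 1: Boundary barriers and the $C^s$-bound up to the boundary.} First I would fix a boundary point $x_0\in\partial\Om$ and use the exterior ball $B_{\rho_0}(y_0)\subset\RR^n\setminus\Om$ tangent at $x_0$. On $\RR^n\setminus B_{\rho_0}(y_0)$ consider the rescaled barrier $w(x)=A\,\big(|x-y_0|^2/\rho_0^2-1\big)^s_+$; by Theorem \ref{thm 1} applied after the affine change of variables, $(-\Delta)^s_p w$ is bounded on the annular region, so for $A$ large (depending only on $\|f\|_{L^\infty}$, $\rho_0$, $n,s,p$) we get $(-\Delta)^s_p w\ge f(x,u)$ in $\Om\cap B_{2\rho_0}(x_0)$ while $w\ge u$ outside; comparison then gives $u(x)\le w(x)\le C\,[\dist(x,\partial\Om)]^s$ near $x_0$. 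Using instead the interior ball and the identity $(-\Delta)^s_p(x_n)^s_+=0$ gives a matching lower barrier, yielding $u(x)\ge c\,[\dist(x,\partial\Om)]^s$; this is essentially Theorem \ref{thm 2}. Combining the two, $|u(x)-u(x_0)|\le C\,|x-x_0|^s$ for $x$ near $\partial\Om$, which is the boundary $C^s$ estimate.

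\textbf{Step 2: Interior estimates and a covering/scaling argument.} Next I would invoke the known interior H\"older regularity for bounded (weak, hence also pointwise after the $C^{1,1}_{loc}$ assumption) solutions of $(-\Delta)^s_p u = g$ with $g\in L^\infty$: there is $\nu_0\in(0,s)$ and $C$ with $[u]_{C^{\nu_0}(B_{1/2})}\le C(\|u\|_{L^\infty(B_1)}+\|u\|_{\mathcal L_{sp}}+\|g\|_{L^\infty}^{1/(p-1)})$, and by scaling the same holds on any ball $B_r(z)\subset\subset\Om$ with the correct $r$-powers. The standard trick is then: for $x,y\in\RR^n$, either both are ``deep inside'' $\Om$ at scale comparable to $r=\min(\dist(x,\partial\Om),\dist(y,\partial\Om))$ and $|x-y|\lesssim r$, in which case the interior estimate on $B_{r/2}$ applies; or $|x-y|\gtrsim r$, in which case one bounds $|u(x)-u(y)|$ by $|u(x)-u(x^\ast)|+|u(x^\ast)-u(y^\ast)|+|u(y^\ast)-u(y)|$ with $x^\ast,y^\ast$ the nearest boundary points, controlling the first and third terms by Step 1 and the middle term trivially (both values are $0$, or both near $\partial\Om$). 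Choosing $\nu_0$ to be the minimum of the interior exponent and $s$ makes the powers of $r$ cancel correctly, and tracking constants gives the stated bound $[u]_{C^{\nu_0}(\RR^n)}\le C(\nu_0)[1+\|u\|_{L^\infty(\Om)}+C\|f\|_{L^\infty(\Om)}^{1/(p-1)}]$ (the $\|u\|_{\mathcal L_{sp}}$ term is absorbed using $\|u\|_{L^\infty}$ and the fact that $u\equiv 0$ outside $\Om$, plus boundedness of $\Om$ is \emph{not} needed here since the barriers are purely local).

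\textbf{Main obstacle.} I expect the delicate point to be Step 1: verifying that the rescaled, translated versions of $(1-|x|^2)^s_+$ are genuine pointwise sub/super-solutions of the \emph{nonlinear} operator on the relevant region, and in particular controlling the nonlocal contribution coming from the values of $u$ far from the ball (the ``tail''), which for $p>2$ does not split linearly. This is exactly where the uniform boundedness from Theorem \ref{thm 1} is indispensable; one must check that after affine scaling the bound degrades only by explicit powers of $\rho_0$, and that adding the bounded term $c(x)u$ or $f(x,u)$ keeps the barrier inequality on the correct side. The second, more bookkeeping-heavy obstacle is making the two exponents (boundary $s$ vs.\ interior $\nu_0$) interact correctly in the covering argument so that the final norm is finite and the claimed dependence on $\|f\|_{L^\infty}$ is linear in $\|f\|_{L^\infty}^{1/(p-1)}$; this is routine but must be done with care about which balls are ``interior at scale $r$'' and which are not.
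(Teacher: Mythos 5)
Your Step 2 (interior estimates plus the covering argument that trades the seminorm bound on balls of radius comparable to $\dist(x,\partial\Omega)$ against the boundary decay when $|x-y|$ is large relative to that distance) is essentially the paper's argument, which follows Section 5.2 of Iannizzotto--Mosconi--Squassina and uses \cite[Theorem 1.4]{brasco2018higher} after first checking that $u\in\mathcal{W}^{s,p}_{loc}(\Omega)$ so that the pointwise solution is a weak solution. The genuine gap is in your Step 1, the boundary decay. First, Theorem \ref{thm 1} concerns $(1-|x|^2)^s_+$, supported \emph{inside} the ball; no affine change of variables turns it into your exterior barrier $w(x)=A\bigl(|x-y_0|^2/\rho_0^2-1\bigr)^s_+$, and in fact for $p\geq 2$ that function grows like $|x|^{2s}$ at infinity, so $|w|^{p-1}\sim|x|^{2s(p-1)}$ with $2s(p-1)\geq sp$, hence $w\notin\mathcal{L}_{sp}$ and $(-\Delta)^s_p w$ is not even well defined as written; a truncation would be needed, and then its fractional $p$-Laplacian on the annulus is a new computation nowhere provided by Theorem \ref{thm 1}. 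Second, even granting a uniform bound on $(-\Delta)^s_p w$, your conclusion ``for $A$ large, $(-\Delta)^s_p(Aw)\geq f$'' does not follow: by the $(p-1)$-homogeneity of the operator, $(-\Delta)^s_p(Aw)=A^{p-1}(-\Delta)^s_p w$, so enlarging $A$ helps only if $(-\Delta)^s_p w$ has a \emph{positive lower bound} on the region, which mere boundedness does not give (note that in the proof of Theorem \ref{thm 2} the sign is obtained not from boundedness alone but from the extra negative contribution of the far region $D$ where $u\geq C_D>0$; no analogous mechanism is available for your supersolution). Also, the lower barrier via Theorem \ref{thm 2} is irrelevant for the H\"older bound (only the upper decay matters, since $u\equiv 0$ outside $\Omega$), and Theorem \ref{thm 2} as stated is for bounded domains with a constant depending on $u$.

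The paper sidesteps all of this: it does not prove the sharp $C^s$ boundary behavior at all, but simply quotes the boundary estimate $|u(x)|\leq c\,[\dist(x,\partial\Omega)]^{\nu_0}$ with some $\nu_0\in(0,s)$ from \cite[Theorem 2]{jin2019} (applied locally thanks to the uniform two-sided ball condition), and then feeds this weaker decay into the tail terms of the interior estimate exactly as you describe in Step 2. So if you want to keep your barrier route you must either construct and estimate a genuine exterior supersolution for the fractional $p$-Laplacian (a nontrivial computation comparable in difficulty to Theorem \ref{thm 1}) or, as the paper does, replace Step 1 by the citation to the known boundary decay with exponent $\nu_0<s$.
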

\begin{remark}
	See \cite{barb2009topics} for more information about the ball condition.
\end{remark}	
We extend the regularity results from bounded domains to unbounded domains. About the H\"older regularity of solutions for $(-\Delta)^s_p u(x)=f(x,u)$, Antonio Iannizzotto, Sunra Mosconi, and Marco Squassina in \cite{iannizzotto2014global} prove the global H\"older regularity of solutions in $\mathcal{W}^{s,p}_0(\Omega)$ in bounded domains. Lorenzo Brasco, Erik Lindgren, and Armin Schikorra in \cite{brasco2018higher} consider the higher H\"older regularity of local weak solutions in bounded domains, and they first give an explicit H\"older exponent. Yan Li and Lingyu Jin in \cite{jin2019} prove certain H\"older continuity up to the boundary in bounded domains. 

This paper is organized as follows. In section 2, we give some preliminary properties. Section 3 is devoted to showing Theorem \ref{thm 1}. Section 4 is contributed to proving Hopf's Theorem \ref{thm 2}. Section 5 is contributed to proving the global H\"older regularity for bounded positive solutions. In section 6, we list our numerical calculation results. The constant $C$ may vary from line to line or even in the same line. 

\section{Preliminaries}
Let's start by introducing some notations and properties we will use in this article.
\begin{Lemma}\label{duichen}
	Set $u(x)=(1-|x|^2)_+^s,$ then $(-\Delta)^s_p u(x)$ is radially symmetric in the unit ball.
\end{Lemma}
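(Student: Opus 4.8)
The plan is to exploit the rotational invariance of the operator and of the profile $u(x)=(1-|x|^2)^s_+$. First I would fix an arbitrary rotation $R\in O(n)$ and show that $(-\Delta)^s_p u(Rx)=(-\Delta)^s_p u(x)$ for every $x\in B_1$ at which the quantities are well defined. The key observation is that $u$ is itself radial, so $u(Ry)=u(y)$ for all $y\in\mathbb{R}^n$; in particular $u(Rx)=u(x)$, and more importantly the integrand that defines $(-\Delta)^s_p$ is controlled solely through the values of $u$ and the distance $|x-y|$.

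Concretely, starting from the principal-value definition
\[
(-\Delta)^s_p u(Rx)=C_{n,s,p}\lim_{\ve\to0}\int_{\mathbb{R}^n\setminus B_\ve(Rx)}\frac{|u(Rx)-u(y)|^{p-2}\bigl[u(Rx)-u(y)\bigr]}{|Rx-y|^{n+sp}}\dd y,
\]
I would perform the change of variables $y=Rz$. Since $R$ is an isometry, the Jacobian is $1$, the ball $B_\ve(Rx)$ pulls back to $B_\ve(x)$, and $|Rx-Rz|=|x-z|$. Using $u(Rx)=u(x)$ and $u(Rz)=u(z)$ (radiality of $u$), the integral becomes exactly the principal-value integral defining $(-\Delta)^s_p u(x)$. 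Hence $(-\Delta)^s_p u(Rx)=(-\Delta)^s_p u(x)$ for all $R\in O(n)$, which is precisely the statement that $(-\Delta)^s_p u$ is radially symmetric in $B_1$. One should also note that the hypothesis $u\in C^{1,1}_{loc}\cap\mathcal{L}_{sp}$ needed to make the principal value meaningful is itself rotation-invariant, so the passage is legitimate at every interior point.

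This argument is essentially elementary and I do not anticipate a genuine obstacle; the only point requiring a little care is the bookkeeping of the principal-value cutoff under the change of variables — one must check that the excised ball transforms correctly ($R^{-1}(B_\ve(Rx))=B_\ve(x)$) so that the limit defining the P.V. is preserved, rather than being replaced by an integral over a rotated neighborhood. Once that is observed, the symmetry of $u$ together with the isometry invariance of the kernel $|x-y|^{-(n+sp)}$ closes the proof immediately.
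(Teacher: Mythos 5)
Your proposal is correct and is essentially the paper's own argument: both prove $(-\Delta)^s_p u(Ax)=(-\Delta)^s_p u(x)$ for an orthogonal $A$ by the change of variables $y\mapsto Ay$ (unit Jacobian), using the radiality of $u$ and the isometry invariance of the kernel $|x-y|^{-(n+sp)}$. Your extra remark that the excised ball $B_\ve$ transforms correctly under the rotation is a welcome point of care, but it does not change the route.
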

\begin{proof}
	Let $G[t]:=|t|^{p-2}t$, and $x\in B_1$ given. Set $A$ is any orthogonal transformation, then
	\begin{equation*}
	\begin{aligned}
	(-\Delta)^s_p u(x)&=C_{n,s,p}P.V.\int_{\mathbb{R}^n}\frac{G[(1-|x|^2)^s-(1-|y|^2)^s]}{|x-y|^{n+sp}}\mathrm{d}y\\
	&=C_{n,s,p}P.V.\int_{\mathbb{R}^n}\frac{G[(1-|Ax|^2)^s-(1-|Ay|^2)^s]}{|A(x-y)|^{n+sp}}\mathrm{d}y\\
	&=C_{n,s,p}P.V.\int_{\mathbb{R}^n}\frac{G[(1-|Ax|^2)^s-(1-|z|^2)^s]}{|Ax-z|^{n+sp}}\mathrm{d}z\\
	&=(-\Delta)^s_p u(Ax).
	\end{aligned}
	\end{equation*}
	which implies the radial symmetry of  $(-\Delta)^s_p u(x).$
\end{proof}
\begin{Lemma}\label{dandiao}
	Set $G(t):=[t]^{p-1}=|t|^{p-2}t$, $p\geq 2$, then $G(t)$ is strictly increasing and $$ G(t)-G(s)\leq 2^{2-p}G(t-s),\quad \forall t<s.$$  
\end{Lemma}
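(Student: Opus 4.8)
The plan is to treat the two assertions separately: the monotonicity is essentially immediate, and the quantitative bound reduces to a one–variable optimisation.

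First, for the strict monotonicity of $G$: on $(0,\infty)$ we have $G(t)=t^{p-1}$, on $(-\infty,0)$ we have $G(t)=-(-t)^{p-1}$, and $G(0)=0$. Since $t\mapsto t^{p-1}$ is strictly increasing on $[0,\infty)$ for $p\ge 2$, $G$ is odd, and $G(t)<0=G(0)<G(\tau)$ whenever $t<0<\tau$, it follows that $G$ is strictly increasing on all of $\mathbb{R}$. (Equivalently, $G$ is continuous and $G'(t)=(p-1)|t|^{p-2}>0$ for $t\ne 0$.)

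Next I would rewrite the inequality in a cleaner form. Because $G$ is odd, for $t<s$ we have $G(t-s)=-G(s-t)=-(s-t)^{p-1}$, so, renaming $a:=s>b:=t$, the claimed bound $G(t)-G(s)\le 2^{2-p}G(t-s)$ is equivalent to
\begin{equation*}
G(a)-G(b)\ \ge\ 2^{2-p}(a-b)^{p-1}\qquad\text{for all }a>b.
\end{equation*}
To prove this, I would fix the gap $\delta:=a-b>0$ and minimise $f(b):=G(b+\delta)-G(b)$ over $b\in\mathbb{R}$. We have $f'(b)=(p-1)\bigl(|b+\delta|^{p-2}-|b|^{p-2}\bigr)$, and since $|b+\delta|^2-|b|^2=\delta(2b+\delta)$, for $p>2$ this is negative for $b<-\delta/2$ and positive for $b>-\delta/2$; hence $f$ attains its global minimum at $b=-\delta/2$, with
\begin{equation*}
f\bigl(-\tfrac{\delta}{2}\bigr)=G\bigl(\tfrac{\delta}{2}\bigr)-G\bigl(-\tfrac{\delta}{2}\bigr)=2\bigl(\tfrac{\delta}{2}\bigr)^{p-1}=2^{2-p}\delta^{p-1}.
\end{equation*}
Therefore $G(a)-G(b)=f(b)\ge 2^{2-p}(a-b)^{p-1}$, as wanted; the degenerate case $p=2$ is trivial since then $f\equiv\delta=2^{2-p}\delta^{p-1}$.

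As an alternative route I would split into the sign cases $0\le b<a$, $b<a\le 0$, and $b<0<a$: in the first two, superadditivity of $t\mapsto t^{p-1}$ on $[0,\infty)$ (valid for $p\ge 2$) gives the stronger bound $G(a)-G(b)\ge(a-b)^{p-1}\ge 2^{2-p}(a-b)^{p-1}$, while in the mixed case $G(a)-G(b)=a^{p-1}+(-b)^{p-1}\ge 2^{2-p}\bigl(a+(-b)\bigr)^{p-1}=2^{2-p}(a-b)^{p-1}$ by convexity of $t\mapsto t^{p-1}$. There is no genuine obstacle in the argument; the only points requiring a little care are the degenerate exponent $p=2$, where $f$ is constant, and checking that the extremal configuration $b=-a=-\delta/2$ produces exactly the constant $2^{2-p}$, so that the inequality is sharp and cannot be improved.
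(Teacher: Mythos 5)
Your proof is correct in both parts, and the constant comes out right. The paper's argument reduces the two-variable inequality in a different way: exploiting the $(p-1)$-homogeneity of $G$, it divides through (treating $s=0$ separately) and minimises the one-variable function $F(\rho)=\frac{1-|\rho|^{p-2}\rho}{|1-\rho|^{p-2}(1-\rho)}$ over the ratio $\rho=t/s\neq 1$, using $F'(\rho)=\frac{(p-1)(1-|\rho|^{p-2})}{|1-\rho|^{p}}$ together with $F(\rho)\to 1$ as $\rho\to\pm\infty$ to conclude that the minimum is $F(-1)=2^{2-p}$. You instead fix the gap $\delta=s-t$ and minimise $b\mapsto G(b+\delta)-G(b)$ over the translate $b$, locating the minimum at the antisymmetric configuration $b=-\delta/2$; this is the same extremal point as the paper's ($t=-s$, i.e.\ $\rho=-1$) in a different parametrisation, and it is arguably cleaner, since it avoids the special case $s=0$ and the singular behaviour of $F$ near $\rho=1$, and it makes the sharpness of $2^{2-p}$ transparent. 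Your alternative case-splitting argument (superadditivity of $t\mapsto t^{p-1}$ on $[0,\infty)$ for same-sign arguments, convexity for the mixed-sign case) has no counterpart in the paper and is the most elementary of the three routes, at the cost of a small amount of bookkeeping; your separate treatment of $p=2$ is also handled correctly.
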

\begin{proof}
	\begin{itemize}
		\item[1)] For $t\neq0,$ by  $G^{\prime}(t)=(p-1)|t|^{p-2}>0,$ so $G(t)$ is strictly increasing.
		\item[2)] We only need to prove \[\frac{G(s)-G(t)}{G(s-t)}\geq 2^{2-p},\quad \forall t<s.\]
		If $s=0,$ it is obviously true. In the following we assume $s\neq0,$ then by direct calculation, $$\frac{G(s)-G(t)}{G(s-t)}=\frac{|s|^{p-2}s-|t|^{p-2}t}{|s-t|^{p-2}(s-t)}=\frac{1-|\frac{t}{s}|^{p-2}\frac{t}{s}}{|1-\frac{t}{s}|^{p-2}(1-\frac{t}{s})}.$$	
		Set \[F(\rho)=\frac{1-|\rho|^{p-2}\rho}{|1-\rho|^{p-2}(1-\rho)},\quad \rho\neq1.\]
		 It is readily to have 
 \[\lim\limits_{\rho\rightarrow \pm\infty}F(\rho)=1, \quad F'(\rho)=\frac{(p-1)(1-|\rho|^{p-2})}{|1-\rho|^p}.\]Hence for $p\geq 2$, $F(\rho)$ attains the minimum value at  $\rho=-1$, i.e. $F(\rho)\geq F(-1)=2^{2-p}.$ 
	\end{itemize} 
\end{proof}
\begin{Proposition}[Comparison principle]\label{bijiaoyuanli}
	Let $\Omega$ be a bounded domain, $u,v\in C^{1,1}_{loc}(\Omega)\cap \mathcal{L}_{sp}(\mathbb{R}^n)$, $u$ is lower semi-continuous in $\overline{\Omega}$ and $v$ is upper semi-continuous in $\overline{\Omega}$. $u, v$ satisfy
	\begin{equation}\label{bijiao}
	\begin{cases}
	(-\Delta)^s_p u(x)+c(x)u(x)\geq (-\Delta)^s_p v(x)+c(x)v(x),&\quad x\in\Omega,\\
	u(x)\geq v(x),& \quad x\in\mathbb{R}^n\setminus\Omega.
	\end{cases}
	\end{equation}
	where $c(x)\geq0$. Then $u(x)\geq v(x)$ in  $\Omega$.
\end{Proposition}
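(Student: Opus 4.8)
The plan is to prove this comparison principle by a contradiction argument combined with a careful use of the pointwise definition of $(-\Delta)^s_p$ at an interior minimum point of $w := u - v$. Suppose for contradiction that $\inf_{\Omega} w < 0$. Since $u$ is lower semicontinuous and $v$ is upper semicontinuous, $w$ is lower semicontinuous on $\overline{\Omega}$, and since $w \geq 0$ on $\mathbb{R}^n \setminus \Omega$ while $\inf_\Omega w < 0$, the infimum of $w$ over $\overline{\Omega}$ is attained at some point $x_0 \in \Omega$, with $w(x_0) < 0$. The idea is that at such a point the operator $(-\Delta)^s_p$ behaves well: I would like to conclude $(-\Delta)^s_p u(x_0) \leq (-\Delta)^s_p v(x_0)$ in a suitable sense, which together with $c(x_0) \geq 0$ and $u(x_0) < v(x_0)$ will contradict the differential inequality in \eqref{bijiao}.

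The key technical step is the pointwise comparison of the two nonlocal operators at $x_0$. Writing $G(t) = |t|^{p-2}t$, we have
\begin{equation*}
(-\Delta)^s_p u(x_0) - (-\Delta)^s_p v(x_0) = C_{n,s,p}\,P.V.\int_{\mathbb{R}^n}\frac{G\big(u(x_0)-u(y)\big) - G\big(v(x_0)-v(y)\big)}{|x_0-y|^{n+sp}}\,\mathrm{d}y.
\end{equation*}
Because $x_0$ is the global minimum point of $w$ over $\mathbb{R}^n$ (using $w \geq 0 \geq w(x_0)$ outside $\Omega$), for every $y$ we have $w(x_0) \leq w(y)$, i.e. $u(x_0) - u(y) \leq v(x_0) - v(y)$. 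Since $G$ is strictly increasing by Lemma \ref{dandiao}, the numerator is $\leq 0$ for every $y$, hence the integral is $\leq 0$ — provided the principal value integral genuinely exists, which it does since $u, v \in C^{1,1}_{loc}(\Omega) \cap \mathcal{L}_{sp}(\mathbb{R}^n)$ and $x_0 \in \Omega$. I would take care near the singularity $y = x_0$: the $C^{1,1}$ regularity at $x_0$ makes the difference quotient integrable in the principal value sense, and the monotonicity sign is preserved in the limit $\epsilon \to 0$. Thus $(-\Delta)^s_p u(x_0) \leq (-\Delta)^s_p v(x_0)$.

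Combining, from \eqref{bijiao} at $x_0$ we get
\begin{equation*}
0 \leq (-\Delta)^s_p u(x_0) - (-\Delta)^s_p v(x_0) \leq c(x_0)\big(v(x_0) - u(x_0)\big) = -c(x_0)\,w(x_0) \leq 0,
\end{equation*}
since $c(x_0) \geq 0$ and $w(x_0) < 0$ forces $-c(x_0) w(x_0) \geq 0$ — wait, this chain is consistent, so I need to extract strictness. The resolution is that if $c(x_0) > 0$ then $-c(x_0) w(x_0) > 0$ is strictly positive while we also need the reverse; more cleanly, rearranging \eqref{bijiao} gives $(-\Delta)^s_p u(x_0) - (-\Delta)^s_p v(x_0) \geq c(x_0)(v(x_0)-u(x_0)) = -c(x_0)w(x_0) \geq 0$, whereas the nonlocal term is $\leq 0$; hence it equals $0$, which forces $w(y) = w(x_0)$ for a.e. $y$, contradicting $w \geq 0 > w(x_0)$ on the positive-measure set $\mathbb{R}^n \setminus \Omega$.

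The main obstacle I anticipate is making the principal-value manipulation rigorous: one must justify that the integrand's pointwise sign $\leq 0$ survives the $P.V.$ limit, and that splitting off the $c(x)u$ term is legitimate, i.e. that both $(-\Delta)^s_p u(x_0)$ and $(-\Delta)^s_p v(x_0)$ are individually finite (guaranteed by $u, v \in C^{1,1}_{loc}(\Omega) \cap \mathcal{L}_{sp}$) so that their difference can be written as a single convergent integral. A secondary point is confirming that the infimum of the lower semicontinuous function $w$ on the compact set $\overline{\Omega}$ is actually attained in the open set $\Omega$ and is negative — this uses the boundary/exterior condition $w \geq 0$ on $\mathbb{R}^n \setminus \Omega$ together with the assumption $\inf_\Omega w < 0$. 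Everything else is a direct consequence of Lemma \ref{dandiao}.
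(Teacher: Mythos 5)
Your proposal is correct and follows essentially the same route as the paper: locate the negative global minimum of $u-v$ at some $x_0\in\Omega$ (using semicontinuity and the exterior inequality), combine the two principal-value integrals, and apply the monotonicity of $G(t)=|t|^{p-2}t$ from Lemma \ref{dandiao} to reach a contradiction with \eqref{bijiao}. The only cosmetic difference is at the end: the paper gets a strict inequality directly from the strictly negative integrand on the positive-measure set $\mathbb{R}^n\setminus\Omega$, while you first force the integral to vanish and then use strict monotonicity of $G$ to deduce $w\equiv w(x_0)$ a.e., contradicting $w\geq 0$ outside $\Omega$ — both are valid.
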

\begin{proof}
	We prove by contradiction.  If there is a point $x_0\in\Omega$, such that $u(x_0)<v(x_0)$. Since $u(x)$ is lower semi-continuous in $\overline{\Omega}$ and  $v(x)$ is upper semi-continuous in $\overline{\Omega}$, $u-v$ attains the minimum value in $\overline{\Omega}$. Without loss of generality, we assume \[0>u(x_0)-v(x_0)=\min\limits_{\Omega}(u-v)=\min\limits_{\mathbb{R}^n}(u-v).\] Then
	\[ u(x_0)-v(x_0)\leq u(y)-v(y),\quad \forall y\in\mathbb{R}^n.\]
	That is \[u(x_0)-u(y)\leq v(x_0)-v(y),\quad \forall y\in\mathbb{R}^n.\]
	By Lemma \ref{dandiao},
	\begin{equation}\label{bu}
	\begin{aligned}
	G\left(u(x_0)-u(y)\right)- G\left(v(x_0)-v(y)\right)&\leq0 ,\quad \forall y\in\mathbb{R}^n,\\
	G\left(u(x_0)-u(y)\right)- G\left(v(x_0)-v(y)\right)&<0 ,\quad \forall y\in\mathbb{R}^n\setminus\Omega.
	\end{aligned}
	\end{equation}
	By the equation \eqref{bijiao} and (\ref{bu}), we have 
	\begin{equation}
	\begin{aligned}
	0&\leq (-\Delta)^s_p u(x_0)- (-\Delta)^s_p v(x_0)+c(x_0)u(x_0)-c(x_0)v(x_0)\\
	&=C_{n,s,p}P.V.\int_{\mathbb{R}^n}\frac{G[u(x_0)-u(y)]}{|x_0-y|^{n+sp}}\mathrm{d}y-C_{n,s,p}P.V.\int_{\mathbb{R}^n}\frac{G[v(x_0)-v(y)]}{|x_0-y|^{n+sp}}\mathrm{d}y+c(x_0)\left(u(x_0)-v(x_0)\right)\\
	&=C_{n,s,p}P.V.\int_{\mathbb{R}^n}\frac{G[u(x_0)-u(y)]-G[v(x_0)-v(y)]}{|x_0-y|^{n+sp}}\mathrm{d}y+c(x_0)\left(u(x_0)-v(x_0)\right)\\
	&< 0.
	\end{aligned}
	\end{equation}
	Which is a contradiction. Thus there is no such $x_0$, and $u(x)\geq v(x)$ in  $\Omega$.
\end{proof}		
\section{Proof of Theorem \ref{thm 1}}	
In fact, for some  $\delta\in(0,1),$ when $|x|<1-\delta$, by \cite[Lemma 5.2]{Chen2018Maximum}, 
\begin{equation*}
\lvert(-\Delta)^s_p u(x)\rvert\leq C\int\limits_{\mathbb{R}^n\backslash B_\delta(x)}\frac{1}{|x-y|^{n+sp}}\mathrm{d}y+C\lvert\nabla u(x)\rvert^{p-2}\int_{B_\delta(x)}\frac{|x-y|^p}{|x-y|^{n+sp}}\mathrm{d}y\leq C.\\
\end{equation*}
That is,  $(-\Delta)^s_p u(x)$ is bounded for $|x|<1-\delta$. Therefore in the following we only need to consider the case when $|x|$ is close to 1.
\subsection{$\mathbf{n=1}$}
In this part, for $u(x)=(1-x^2)^s_{+}$, we will prove that $(-\Delta)^s_p u(x)$ is uniformly bounded in $(-1,1)$. Due to Lemma \ref{duichen}, we only need to consider $x\in(0,1)$. The proof is divided into 3 steps.\\
Step 1.	
Firstly we give a general estimate for $(-\Delta)^s_p u(x)$ when $x$ is close to 1. For simplicity, we omit the constant $C_{n,s,p}.$
\begin{equation*}
\begin{aligned}
(-\Delta)^s_p u(x)&=\int_{-\infty}^{-1}\frac{(1-x^2)^{s(p-1)}}{(x-y)^{1+sp}}\mathrm{d}y+\int_{-1}^{-x}\frac{\left[(1-x^2)^s-(1-y^2)^s\right]^{p-1}}{(x-y)^{1+sp}}\mathrm{d}y+\int_{1}^{\infty}\frac{(1-x^2)^{s(p-1)}}{(y-x)^{1+sp}}\mathrm{d}y\\
&\quad+\lim_{\epsilon\rightarrow 0}\left\{\int_{-x}^{x-\epsilon}\frac{-\left[(1-y^2)^s-(1-x^2)^s\right]^{p-1}}{(x-y)^{1+sp}}\mathrm{d}y+\int_{x+\epsilon}^{1}\frac{\left[(1-x^2)^s-(1-y^2)^s\right]^{p-1}}{(y-x)^{1+sp}}\mathrm{d}y\right\}\\
&=\int_{1+x}^{\infty}\frac{(1-x^2)^{s(p-1)}}{z^{1+sp}}\mathrm{d}z+\int_{2x}^{1+x}\frac{\left[(1-x^2)^s-\left(1-(x-z)^2\right)^s\right]^{p-1}}{z^{1+sp}}\mathrm{d}z+\int_{1-x}^{\infty}\frac{(1-x^2)^{s(p-1)}}{z^{1+sp}}\mathrm{d}z\\
&\quad+\lim_{\epsilon\rightarrow 0}\left\{\int_{\epsilon}^{2x}\frac{-\left[\left(1-(x-z)^2\right)^s-(1-x^2)^s\right]^{p-1}}{z^{1+sp}}\mathrm{d}z+\int_{\epsilon}^{1-x}\frac{\left[(1-x^2)^s-\left(1-(x+z)^2\right)^s\right]^{p-1}}{z^{1+sp}}\mathrm{d}z\right\}\\
&=\int_{1+x}^{\infty}\frac{(1-x^2)^{s(p-1)}}{z^{1+sp}}\mathrm{d}z+\int_{2x}^{1+x}\frac{\left[(1-x^2)^s-\left(1-(x-z)^2\right)^s\right]^{p-1}}{z^{1+sp}}\mathrm{d}z\\
&\quad+\lim_{\epsilon\rightarrow 0}\int_{\epsilon}^{1-x}\frac{\left[(1-x^2)^s-\left(1-(x+z)^2\right)^s\right]^{p-1}-\left[\left(1-(x-z)^2\right)^s-(1-x^2)^s\right]^{p-1}}{z^{1+sp}}\mathrm{d}z\\
&\quad+\int_{1-x}^{1}\frac{(1-x^2)^{s(p-1)}-\left[(1-(x-z)^2)^s-(1-x^2)^s\right]^{p-1}}{z^{1+sp}}\mathrm{d}z\\
&\quad+\int_{1}^{2x}\frac{-\left[(1-(x-z)^2)^s-(1-x^2)^s\right]^{p-1}}{z^{1+sp}}\mathrm{d}z+\int_{1}^{\infty}\frac{(1-x^2)^{s(p-1)}}{z^{1+sp}}\mathrm{d}z\\
&=:I_1+I_2+I_3+I_4+I_5+I_6.
\end{aligned}
\end{equation*}
Where \begin{equation*}
\begin{aligned}
I_1+I_6&= (1-x^2)^{s(p-1)}\left[\int_{1+x}^{\infty}\frac{1}{z^{1+sp}}\mathrm{d}z+\int_{1}^{\infty}\frac{1}{z^{1+sp}}\mathrm{d}z\right]\\
&=(1-x^2)^{s(p-1)}\left[\frac{1}{sp(1+x)^{sp}}+\frac{1}{sp}\right],\\
\end{aligned}
\end{equation*}
and 
\begin{equation*}
\begin{aligned}
|I_2|+|I_5|&\leq \int_{2x}^{1+x}\frac{1}{z^{1+sp}}\mathrm{d}z+\int_{1}^{2x}\frac{1}{z^{1+sp}}\mathrm{d}z=\int_{1}^{1+x}\frac{1}{z^{1+sp}}\mathrm{d}z=\frac{1}{sp}\left[1-\frac{1}{(1+x)^{sp}}\right].
\end{aligned}
\end{equation*}
So $I_1,I_2,I_5,I_6$ are uniformly bounded. Then we only need to consider $I_3, I_4.$
\begin{equation*}
\begin{aligned}
I_3&=\lim_{\epsilon\rightarrow 0}\int_{\epsilon}^{1-x}\frac{\left[(1-x^2)^s-\left(1-(x+z)^2\right)^s\right]^{p-1}-\left[\left(1-(x-z)^2\right)^s-(1-x^2)^s\right]^{p-1}}{z^{1+sp}}\mathrm{d}z\\
&=(1-x^2)^{s(p-1)}\lim_{\epsilon\rightarrow 0}\int_{\epsilon}^{1-x}\frac{\left[1-\left(1-\frac{2xz}{1-x^2}-\frac{z^2}{1-x^2}\right)^s\right]^{p-1}-\left[\left(1+\frac{2xz}{1-x^2}-\frac{z^2}{1-x^2}\right)^s-1\right]^{p-1}}{z^{1+sp}}\mathrm{d}z\\
&=\frac{(2x)^{sp}}{(1-x^2)^s}\lim_{\epsilon\rightarrow 0}\int_{\frac{2x\epsilon}{1-x^2}}^{\frac{2x}{1+x}}\frac{\left[1-\left(1-k-\frac{1-x^2}{4x^2}k^2\right)^s\right]^{p-1}-\left[\left(1+k-\frac{1-x^2}{4x^2}k^2\right)^s-1\right]^{p-1}}{k^{1+sp}}\mathrm{d}k\\
&=:\frac{(2x)^{sp}}{(1-x)^s(1+x)^s}I_3',
\end{aligned}
\end{equation*}
where we use the substitution $$ k=\frac{2x}{1-x^2}z.$$
Similarly, we deal with $I_4,$
\begin{equation*}
\begin{aligned}
I_4&=\int_{1-x}^{1}\frac{(1-x^2)^{s(p-1)}-\left[\left(1-(x-z)^2\right)^s-(1-x^2)^s\right]^{p-1}}{z^{1+sp}}\mathrm{d}z\\
&=(1-x^2)^{s(p-1)}\left[\frac{1}{sp(1-x)^{sp}}-\frac{1}{sp}\right]-\int_{1-x}^{1}\frac{\left[\left(1-(x-z)^2\right)^s-(1-x^2)^s\right]^{p-1}}{z^{1+sp}}\mathrm{d}z\\
&=(1-x^2)^{s(p-1)}\left[\frac{1}{sp(1-x)^{sp}}-\frac{1}{sp}\right]-\frac{(2x)^{sp}}{(1-x^2)^s}\int_{\frac{2x}{1+x}}^{\frac{2x}{1-x^2}}\frac{\left[\left(1+k-\frac{1-x^2}{4x^2}k^2\right)^s-1\right]^{p-1}}{k^{1+sp}}\mathrm{d}k\\
&=\frac{1}{(1-x)^s}\left[\frac{(1+x)^{s(p-1)}}{sp}-\frac{(2x)^{sp}}{(1+x)^s}\int_{\frac{2x}{1+x}}^{\frac{2x}{1-x^2}}\frac{\left[(1+k-\frac{1-x^2}{4x^2}k^2)^s-1\right]^{p-1}}{k^{1+sp}}\mathrm{d}k\right]-\frac{(1-x^2)^{s(p-1)}}{sp}\\
&=:\frac{1}{(1-x)^s}\left[\frac{(1+x)^{s(p-1)}}{sp}-\frac{(2x)^{sp}}{(1+x)^s}I_4'\right]-\frac{(1-x^2)^{s(p-1)}}{sp}.\\
\end{aligned}
\end{equation*}

Step 2. The aim of this part is to simplify $(1-x)^{-s}I_3'$ and $(1-x)^{-s}I_4'.$ To be precise, we will prove 
\begin{equation}
\begin{aligned}
-C\leq(1-x)^{-s}I_3'-(1-x)^{-s}\int_{0}^{1}\frac{\left[1-\left(1-k\right)^s\right]^{p-1}-\left[\left(1+k\right)^s-1\right]^{p-1}}{k^{1+sp}}\mathrm{d}k\leq C,\\
\end{aligned}
\end{equation}
and 
\begin{equation}
\begin{aligned}
-C\leq(1-x)^{-s}I_4'-(1-x)^{-s}\int_{1}^{\infty}\frac{\left[(1+k)^s-1\right]^{p-1}}{k^{1+sp}}\mathrm{d}k\leq C.
\end{aligned}
\end{equation}
Firstly, we cope with the term $(1-x)^{-s}I_3',$
\begin{equation*}
\begin{aligned}
&\quad(1-x)^{-s}I_3'=(1-x)^{-s}\int_{0}^{\frac{2x}{1+x}}\frac{\left[1-\left(1-k-\frac{1-x^2}{4x^2}k^2\right)^s\right]^{p-1}-\left[\left(1+k-\frac{1-x^2}{4x^2}k^2\right)^s-1\right]^{p-1}}{k^{1+sp}}\mathrm{d}k\\
&\leq(1-x)^{-s}\int_{0}^{1}\frac{\left[1-\left(1-k-\frac{1-x^2}{4x^2}k^2\right)^s\right]^{p-1}-\left[\left(1+k-\frac{1-x^2}{4x^2}k^2\right)^s-1\right]^{p-1}}{k^{1+sp}}\mathrm{d}k+C.\\
\end{aligned}
\end{equation*}
This is because 
\begin{equation*}
\begin{aligned}
&\quad\limsup\limits_{x\rightarrow 1}(1-x)^{-s}\Bigg\lvert\int_{\frac{2x}{1+x}}^{1}\frac{\left[1-\left(1-k-\frac{1-x^2}{4x^2}k^2\right)^s\right]^{p-1}-\left[\left(1+k-\frac{1-x^2}{4x^2}k^2\right)^s-1\right]^{p-1}}{k^{1+sp}}\mathrm{d}k\Bigg\rvert\\
&\leq \limsup\limits_{x\rightarrow 1}(1-x)^{-s}\frac{(1+x)^{sp}}{(2x)^{sp}}\frac{1-x}{1+x}=0.
\end{aligned}
\end{equation*}
Next we are going to prove there exists a constant $C>0,$ such that
\begin{equation*}
\begin{aligned}
-C\leq&(1-x)^{-s}\int_{0}^{1}\frac{\left[1-\left(1-k-\frac{1-x^2}{4x^2}k^2\right)^s\right]^{p-1}-\left[\left(1+k-\frac{1-x^2}{4x^2}k^2\right)^s-1\right]^{p-1}}{k^{1+sp}}\mathrm{d}k\\
&-(1-x)^{-s}\int_{0}^{1}\frac{\left[1-\left(1-k\right)^s\right]^{p-1}-\left[\left(1+k\right)^s-1\right]^{p-1}}{k^{1+sp}}\mathrm{d}k\leq C.\\
\end{aligned}
\end{equation*}
Just consider the difference above, 
\begin{equation*}
\begin{aligned}
&\quad(1-x)^{-s}\int_{0}^{1}\frac{\left[1-\left(1-k-\frac{1-x^2}{4x^2}k^2\right)^s\right]^{p-1}-\left[1-\left(1-k\right)^s\right]^{p-1}}{k^{1+sp}}\mathrm{d}k\\
&=(1-x)^{-s}\int_{0}^{\frac{1}{2}}\frac{\left[1-\left(1-k-\frac{1-x^2}{4x^2}k^2\right)^s\right]^{p-1}-\left[1-\left(1-k\right)^s\right]^{p-1}}{k^{1+sp}}\mathrm{d}k\\
&\quad+(1-x)^{-s}\int_{\frac{1}{2}}^{1}\frac{\left[1-\left(1-k-\frac{1-x^2}{4x^2}k^2\right)^s\right]^{p-1}-\left[1-\left(1-k\right)^s\right]^{p-1}}{k^{1+sp}}\mathrm{d}k\\
&\leq \int_{0}^{\frac{1}{2}}\frac{C k^{s(p-2)}\left[\left(1-k\right)^s-\left(1-k-\frac{1-x^2}{4x^2}k^2\right)^s\right]}{k^{1+sp}}\mathrm{d}k\\
&\quad+(1-x)^{-s}\int_{\frac{1}{2}}^{1}\frac{C k^{s(p-2)}\left[\left(1-k\right)^s-\left(1-k-\frac{1-x^2}{4x^2}k^2\right)^s\right]}{k^{1+sp}}\mathrm{d}k\\
&\leq (1-x)^{-s}\int_{0}^{\frac{1}{2}}\frac{C k^{s(p-2)}\left(1-k\right)^s\left[1-\left(1-\frac{1-x^2}{4x^2}\frac{k^2}{1-k}\right)^s\right]}{k^{1+sp}}\mathrm{d}k+(1-x)^{-s}\int_{\frac{1}{2}}^{1}\frac{C k^{s(p-2)}\frac{(1-x^2)^sk^{2s}}{(4x^2)^s}}{k^{1+sp}}\mathrm{d}k\\
&\leq (1-x)^{-s}\int_{0}^{\frac{1}{2}}\frac{C k^{s(p-2)}\left(1-k\right)^s\left[s\frac{1-x^2}{4x^2}\frac{k^2}{1-k}+o\left(\frac{1-x^2}{4x^2}\frac{k^2}{1-k}\right)\right]}{k^{1+sp}}\mathrm{d}k+C\leq C.\\
\end{aligned}
\end{equation*}
And similarly,
\begin{equation*}
\begin{aligned}
&\quad(1-x)^{-s}\int_{0}^{1}\frac{\left[1-\left(1+k-\frac{1-x^2}{4x^2}k^2\right)^s\right]^{p-1}-\left[1-\left(1+k\right)^s\right]^{p-1}}{k^{1+sp}}\mathrm{d}k\\
&\leq (1-x)^{-s}\int_{0}^{\frac{1}{2}}\frac{C k^{s(p-2)}\left(1-k\right)^s\left[s\frac{1-x^2}{4x^2}\frac{k^2}{1+k}+o\left(\frac{1-x^2}{4x^2}\frac{k^2}{1+k}\right)\right]}{k^{1+sp}}\mathrm{d}k+C\leq C.\\
\end{aligned}
\end{equation*}
Secondly, we estimate the term $(1-x)^{-s}I_4',$
\begin{equation*}
\begin{aligned}
I_4'=\int_{\frac{2x}{1+x}}^{\frac{2x}{1-x^2}}\frac{\left[(1+k-\frac{1-x^2}{4x^2}k^2)^s-1\right]^{p-1}}{k^{1+sp}}\mathrm{d}k.
\end{aligned}
\end{equation*}
Next we will prove 
\begin{equation*}
\begin{aligned}
&\quad(1-x)^{-s}I_4'=(1-x)^{-s}\int_{\frac{2x}{1+x}}^{\frac{2x}{1-x^2}}\frac{\left[(1+k-\frac{1-x^2}{4x^2}k^2)^s-1\right]^{p-1}}{k^{1+sp}}\mathrm{d}k\\
&\leq(1-x)^{-s}\int_{1}^{\frac{1}{1-x}}\frac{\left[(1+k-\frac{1-x^2}{4x^2}k^2)^s-1\right]^{p-1}}{k^{1+sp}}\mathrm{d}k+C.\\
\end{aligned}
\end{equation*}
This is due to 
\begin{equation*}
\begin{aligned}
&\quad\limsup\limits_{x\rightarrow 1}(1-x)^{-s}\int_{\frac{2x}{1+x}}^{1}\frac{\left[(1+k-\frac{1-x^2}{4x^2}k^2)^s-1\right]^{p-1}}{k^{1+sp}}\mathrm{d}k\\
&\leq\limsup\limits_{x\rightarrow 1} C(1-x)^{-s}\frac{1-x}{1+x}=0,\\
\end{aligned}
\end{equation*}
and 
\begin{equation*}
\begin{aligned}
&\quad\limsup\limits_{x\rightarrow 1}(1-x)^{-s}\Bigg\lvert\int_{\frac{2x}{1-x^2}}^{\frac{1}{1-x}}\frac{\left[(1+k-\frac{1-x^2}{4x^2}k^2)^s-1\right]^{p-1}}{k^{1+sp}}\mathrm{d}k\Bigg\rvert\\
&\leq\limsup\limits_{x\rightarrow 1} C(1-x)^{-s}\int_{\frac{2x}{1-x^2}}^{\frac{1}{1-x}}\frac{k^{s(p-1)}}{k^{1+sp}}\mathrm{d}k\\
&=\limsup\limits_{x\rightarrow 1} C(1-x)^{-s}\frac{(1-x^2)^{1+s}}{(2x)^{1+s}}\left(\frac{1}{1-x}-\frac{2x}{1-x^2}\right)=0.
\end{aligned}
\end{equation*}
Now we claim that there exists a constant $C>0,$ such that 
\begin{equation*}
\begin{aligned}
-C\leq&(1-x)^{-s}\int_{1}^{\frac{1}{1-x}}\frac{\left[(1+k-\frac{1-x^2}{4x^2}k^2)^s-1\right]^{p-1}}{k^{1+sp}}\mathrm{d}k-(1-x)^{-s}\int_{1}^{\frac{1}{1-x}}\frac{\left[(1+k)^s-1\right]^{p-1}}{k^{1+sp}}\mathrm{d}k\leq C.\\
\end{aligned}
\end{equation*}
Because $x$ is close to $1$, we have
\begin{equation*}
\begin{aligned}
&\quad(1-x)^{-s}\int_{1}^{\frac{1}{1-x}}\frac{\left[\left(1+k\right)^s-1\right]^{p-1}-\left[\left(1+k-\frac{1-x^2}{4x^2}k^2\right)^s-1\right]^{p-1}}{k^{1+sp}}\mathrm{d}k\\
&\leq (1-x)^{-s}\int_{1}^{\frac{1}{1-x}}\frac{Ck^{s(p-2)}\left[\left(1+k\right)^s-\left(1+k-\frac{1-x^2}{4x^2}k^2\right)^s\right]}{k^{1+sp}}\mathrm{d}k\\
&\leq (1-x)^{-s}\int_{1}^{\frac{1}{1-x}}\frac{Ck^{s(p-2)}\left(1+k\right)^s\left[1-\left(1-\frac{1-x^2}{4x^2}\frac{k^2}{1+k}\right)^s\right]}{k^{1+sp}}\mathrm{d}k\\
&\leq (1-x)^{-s}\int_{1}^{\frac{1}{1-x}}\frac{Ck^{s(p-2)}\left(1+k\right)^s\left[s\frac{1-x^2}{4x^2}\frac{k^2}{1+k}+o\left(\frac{1-x^2}{4x^2}\frac{k^2}{1+k}\right)\right]}{k^{1+sp}}\mathrm{d}k\\
&\leq (1-x)^{-s}\int_{1}^{\frac{1}{1-x}}\frac{Ck^{s(p-2)}(1-x)k^{1+s}}{k^{1+sp}}\mathrm{d}k\\
&=(1-x)^{1-s}\int_{1}^{\frac{1}{1-x}}Ck^{-s}\mathrm{d}k
\leq C(1-x)^{1-s}\frac{1}{(1-x)^{1-s}}=C,
\end{aligned}
\end{equation*}
where we use the estimate $$\frac{1-x^2}{4x^2}\frac{k^2}{1+k}\leq\frac{1+x}{4x^2(2-x)}\rightarrow \frac{1}{2} \quad\text{if} \quad  x\rightarrow 1.$$
And there exists a positive constant $C$, such that  
\begin{equation*}
\begin{aligned}
-C\leq(1-x)^{-s}\int_{1}^{\frac{1}{1-x}}\frac{\left[(1+k)^s-1\right]^{p-1}}{k^{1+sp}}\mathrm{d}k-(1-x)^{-s}\int_{1}^{\infty}\frac{\left[(1+k)^s-1\right]^{p-1}}{k^{1+sp}}\mathrm{d}k\leq C.\\
\end{aligned}
\end{equation*}
This is because when $x$ is close to $1$, we have
\begin{equation*}
\begin{aligned}
&(1-x)^{-s}\int_{\frac{1}{1-x}}^{\infty}\frac{\left[(1+k)^s-1\right]^{p-1}}{k^{1+sp}}\mathrm{d}k\\
&\leq (1-x)^{-s}\int_{\frac{1}{1-x}}^{\infty}\frac{k^{s(p-1)}}{k^{1+sp}}\mathrm{d}k\\
&=(1-x)^{-s}\int_{\frac{1}{1-x}}^{\infty}k^{-s-1}\mathrm{d}k\leq C.\\
\end{aligned}
\end{equation*}

Step 3. We will prove all terms are bounded uniformly. By the above simplification, the singular term of $(-\Delta)^s_p u(x)$ is 
$$(1-x)^{-s}\frac{(2x)^{sp}}{(1+x)^s}\left\{\int_{0}^{1}\frac{\left[1-(1-k)^s\right]^{p-1}-\left[(1+k)^s-1\right]^{p-1}}{k^{1+sp}}\mathrm{d}k+\frac{(1+x)^{sp}}{sp(2x)^{sp}}-\int_{1}^{\infty}\frac{\left[(1+k)^s-1\right]^{p-1}}{k^{1+sp}}\mathrm{d}k\right\}.$$ 
Furthermore, there is a constant $C>0,$
\begin{equation*}
\begin{aligned}
-C\leq (1-x)^{-s}\frac{(1+x)^{sp}}{sp(2x)^{sp}}-(1-x)^{-s}\frac{1}{sp}\leq C.
\end{aligned}
\end{equation*}
Because 
\begin{equation*}
\begin{aligned}
(1-x)^{-s}\frac{1}{sp}\left[\frac{(1+x)^{sp}}{(2x)^{sp}}-1\right]&=(1-x)^{-s}\frac{1}{sp}\left[\left(1+\frac{1-x}{2x}\right)^{sp}-1\right]\\
&=(1-x)^{-s}\frac{1}{sp}\left[sp\frac{1-x}{2x}+o(\frac{1-x}{2x})\right]\leq C.
\end{aligned}
\end{equation*}
Hence we just need to prove the following identity:
\begin{equation}\label{identity}
\begin{aligned}
\frac{1}{sp}+\int_{0}^{1}\frac{\left[1-(1-k)^s\right]^{p-1}-\left[(1+k)^s-1\right]^{p-1}}{k^{1+sp}}\mathrm{d}k-\int_{1}^{\infty}\frac{\left[(1+k)^s-1\right]^{p-1}}{k^{1+sp}}\mathrm{d}k=0.
\end{aligned}
\end{equation}
Now we begin to prove the above identity. For any $\epsilon\in(0,1)$ fixed, we have 
\begin{equation*}
\begin{aligned}
&\quad\int_{0}^{1}\frac{\left[1-(1-k)^s\right]^{p-1}-\left[(1+k)^s-1\right]^{p-1}}{k^{1+sp}}\mathrm{d}k-\int_{1}^{\infty}\frac{\left[(1+k)^s-1\right]^{p-1}}{k^{1+sp}}\mathrm{d}k\\
&=\int_{0}^{\epsilon}\frac{\left[1-(1-k)^s\right]^{p-1}-\left[(1+k)^s-1\right]^{p-1}}{k^{1+sp}}\mathrm{d}k+\int_{\epsilon}^{1}\frac{\left[1-(1-k)^s\right]^{p-1}-\left[(1+k)^s-1\right]^{p-1}}{k^{1+sp}}\mathrm{d}k\\
&\quad-\int_{1}^{\infty}\frac{\left[(1+k)^s-1\right]^{p-1}}{k^{1+sp}}\mathrm{d}k\\
&=\int_{0}^{\epsilon}\frac{\left[1-(1-k)^s\right]^{p-1}-\left[(1+k)^s-1\right]^{p-1}}{k^{1+sp}}\mathrm{d}k+\int_{\epsilon}^{1}\frac{\left[1-(1-k)^s\right]^{p-1}}{k^{1+sp}}\mathrm{d}k-\int_{\epsilon}^{\infty}\frac{\left[(1+k)^s-1\right]^{p-1}}{k^{1+sp}}\mathrm{d}k.
\end{aligned}
\end{equation*}
Notice that
\begin{equation*}
\begin{aligned}
\int_{\epsilon}^{1}\frac{\left[1-(1-k)^s\right]^{p-1}}{k^{1+sp}}\mathrm{d}k=\int_{\frac{\epsilon}{1-\epsilon}}^{\infty}\frac{(1+t)^{s-1}\left[(1+t)^s-1\right]^{p-1}}{t^{1+sp}}\mathrm{d}t,
\end{aligned}
\end{equation*}
where we use the change of variable $$t=\frac{k}{1-k}.$$
Then 
\begin{equation*}
\begin{aligned}
&\quad\int_{0}^{1}\frac{\left[1-(1-k)^s\right]^{p-1}-\left[(1+k)^s-1\right]^{p-1}}{k^{1+sp}}\mathrm{d}k-\int_{1}^{\infty}\frac{\left[(1+k)^s-1\right]^{p-1}}{k^{1+sp}}\mathrm{d}k\\
&=\int_{0}^{\epsilon}\frac{\left[1-(1-k)^s\right]^{p-1}-\left[(1+k)^s-1\right]^{p-1}}{k^{1+sp}}\mathrm{d}k+\int_{\frac{\epsilon}{1-\epsilon}}^{\infty}\frac{(1+t)^{s-1}\left[(1+t)^s-1\right]^{p-1}}{t^{1+sp}}\mathrm{d}t\\
&\quad-\int_{\frac{\epsilon}{1-\epsilon}}^{\infty}\frac{\left[(1+t)^s-1\right]^{p-1}}{t^{1+sp}}\mathrm{d}t-\int_{\epsilon}^{\frac{\epsilon}{1-\epsilon}}\frac{\left[(1+k)^s-1\right]^{p-1}}{k^{1+sp}}\mathrm{d}k\\
&=\int_{0}^{\epsilon}\frac{\left[1-(1-k)^s\right]^{p-1}-\left[(1+k)^s-1\right]^{p-1}}{k^{1+sp}}\mathrm{d}k-\int_{\epsilon}^{\frac{\epsilon}{1-\epsilon}}\frac{\left[(1+k)^s-1\right]^{p-1}}{k^{1+sp}}\mathrm{d}k\\
&\quad-\int_{\frac{\epsilon}{1-\epsilon}}^{\infty}\frac{\left[(1+t)^s-1\right]^{p-1}\left[1-(1+t)^{s-1}\right]}{t^{1+sp}}\mathrm{d}t\\
&=\int_{0}^{\epsilon}\frac{\left[1-(1-k)^s\right]^{p-1}-\left[(1+k)^s-1\right]^{p-1}}{k^{1+sp}}\mathrm{d}k-\int_{\epsilon}^{\frac{\epsilon}{1-\epsilon}}\frac{\left[(1+k)^s-1\right]^{p-1}}{k^{1+sp}}\mathrm{d}k-\frac{1}{sp}\frac{\left[(1+t)^s-1\right]^{p}}{ t^{sp}}\Bigg\rvert_{\frac{\epsilon}{1-\epsilon}}^{\infty}\\
&=:H_1+H_2+H_3.
\end{aligned}
\end{equation*}
Now we estimate term by term and let $\epsilon\rightarrow0.$
\begin{equation*}
\begin{aligned}
H_1&=\int_{0}^{\epsilon}\frac{\left[1-(1-k)^s\right]^{p-1}-\left[(1+k)^s-1\right]^{p-1}}{k^{1+sp}}\mathrm{d}k\\
&\leq \int_{0}^{\epsilon}\frac{Ck^{s(p-2)}\left[2-(1-k)^s-(1+k)^s\right]}{k^{1+sp}}\mathrm{d}k\\
&\leq \int_{0}^{\epsilon}\frac{Ck^{s(p-2)}\left[2-\left(1-sk-\frac{s(1-s)}{2}k^2+o(k^2)\right)-\left(1+sk-\frac{s(1-s)}{2}k^2+o(k^2)\right)\right]}{k^{1+sp}}\mathrm{d}k\\
&\leq \int_{0}^{\epsilon}\frac{Ck^{s(p-2)}\left[k^2+o(k^2)\right]}{k^{1+sp}}\mathrm{d}k\rightarrow 0 \quad \text{as} \quad \epsilon\rightarrow 0.\\
%
H_2&=\int_{\epsilon}^{\frac{\epsilon}{1-\epsilon}}\frac{\left[(1+k)^s-1\right]^{p-1}}{k^{1+sp}}\mathrm{d}k=\int_{\epsilon}^{\frac{\epsilon}{1-\epsilon}}\frac{\left[\left(1+sk+o(k)\right)-1\right]^{p-1}}{k^{1+sp}}\mathrm{d}k\leq \int_{\epsilon}^{\frac{\epsilon}{1-\epsilon}}\frac{Ck^{p-1}}{k^{1+sp}}\mathrm{d}k\\
&=\begin{cases}
\int_{\epsilon}^{\frac{\epsilon}{1-\epsilon}}Ck^{p-sp-2}\mathrm{d}k\leq C\left(\frac{\epsilon}{1-\epsilon}\right)^{p-sp-1}\rightarrow 0 \quad \text{as} \quad \epsilon\rightarrow 0, & \text{if}\quad p-sp>1;\\
\int_{\epsilon}^{\frac{\epsilon}{1-\epsilon}}C\frac{1}{k}\mathrm{d}k\leq C\frac{1}{\epsilon}(\frac{\epsilon}{1-\epsilon}-\epsilon)=C\frac{\epsilon}{1-\epsilon}\rightarrow 0 \quad \text{as} \quad \epsilon\rightarrow 0,& \textrm{if}\quad p-sp=1;\\
\int_{\epsilon}^{\frac{\epsilon}{1-\epsilon}}C\frac{1}{k^{2-(p-sp)}}\mathrm{d}k\leq
C\frac{1}{\epsilon^{2-(p-sp)}}(\frac{\epsilon}{1-\epsilon}-\epsilon)=C\frac{\epsilon^{p-sp}}{1-\epsilon}\rightarrow 0 \quad \text{as} \quad \epsilon\rightarrow 0,& \text{if}\quad p-sp<1.
\end{cases}\\
H_3&=-\frac{1}{sp}\frac{\left[(1+t)^s-1\right]^{p}}{ t^{sp}}\Bigg\rvert_{\frac{\epsilon}{1-\epsilon}}^{\infty}=-\frac{1}{sp}+\frac{1}{sp}\frac{\left[1-(1-\epsilon)^s\right]^p}{\epsilon^{sp}}\\
&=-\frac{1}{sp}+\frac{1}{sp}\frac{\left[1-(1-s\epsilon-o(\epsilon))\right]^p}{\epsilon^{sp}}\rightarrow-\frac{1}{sp}  \quad \text{as} \quad \epsilon\rightarrow 0.\\
\end{aligned}
\end{equation*}
Therefore we have proved the identity (\ref{identity}).  Hence we have completed the proof for $n=1$.
\subsection{$\mathbf{n \geq 2}$}
The purpose of this section is to prove that for $u(x)=(1-|x|^2)^s_+$, $(-\Delta)^s_pu(x)$  is uniformly bounded in $B_1$ for higher dimensions. There are totally 14 steps.	

Step 1.  Due to Lemma \ref{duichen}, we assume $x:=(x,0,\dots,0)\in B_1$ is close to $(1,0,\dots,0),$ and  $ y=(y_1,y_2,\dots,y_n)=:(y_1,\bar{y}).$ We omit the constant $C_{n,s,p}$ for simplicity.
\begin{equation*}
\begin{aligned}
&(-\Delta)^s_pu(x)\\
= \ &\lim_{\epsilon\rightarrow0}\int\limits_{\mathbb{R}^n\backslash B_\epsilon(x)}\frac{|u(x)-u(y)|^{p-2}[u(x)-u(y)]}{|x-y|^{n+sp}}\mathrm{d}y\\
= \ &\lim_{\epsilon\rightarrow0}\int\limits_{\{y\in\mathbb{R}^n: |x-y|\geq \epsilon\}}\frac{\left|(1-x^2)^s-(1-|y|^2)^s_+\right|^{p-2}\left[(1-x^2)^s-(1-|y|^2)^s_+\right]}{|x-y|^{n+sp}}\mathrm{d}y\\
= \ &\lim_{\epsilon\rightarrow0}\int\limits_{\{y\in\mathbb{R}^n: |x-y|\geq \epsilon\}}\frac{\left|(1-x^2)^s-(1-y_1^2-|\bar{y}|^2)^s_+\right|^{p-2}\left[(1-x^2)^s-(1-y_1^2-|\bar{y}|^2)^s_+\right]}{\left|(x-y_1)^2+|\bar{y}|^2\right|^{\frac{n+sp}{2}}}\mathrm{d}y.
\end{aligned}
\end{equation*}
Set $\displaystyle z=(z_1,\bar{z}),$ where $\displaystyle z_1=x-y_1, \bar{z}=\bar{y}.$ Then
\begin{equation*}
\begin{aligned}
&(-\Delta)^s_pu(x)\\
= \ &\lim_{\epsilon\rightarrow0}\int\limits_{\{z\in\mathbb{R}^n: |z|\geq \epsilon\}}\frac{\left|(1-x^2)^s-\left(1-(x-z_1)^2-|\bar{z}|^2\right)^s_+\right|^{p-2}\left[(1-x^2)^s-\left(1-(x-z_1)^2-|\bar{z}|^2\right)^s_+\right]}{\left(z_1^2+|\bar{z}|^2\right)^{\frac{n+sp}{2}}}\mathrm{d}z\\
= \ &(1-x^2)^{s(p-1)}\lim_{\epsilon\rightarrow0}\int\limits_{\{z\in\mathbb{R}^n: |z|\geq \epsilon\}}\frac{\left|1-\left(1+\frac{2xz_1}{1-x^2}-\frac{|z|^2}{1-x^2}|\right)^s_+\right|^{p-2}\left[1-\left(1+\frac{2xz_1}{1-x^2}-\frac{|z|^2}{1-x^2}\right)^s_+\right]}{|z|^{n+sp}}\mathrm{d}z.\\
\end{aligned}
\end{equation*}
Let $\displaystyle w=\frac{2xz}{1-x^2},$ where $\displaystyle w_1=\frac{2xz_1}{1-x^2}$ and $\displaystyle \bar{w}=\frac{2x\bar{z}}{1-x^2}$, then 
\begin{equation}\label{zuichuguji}
\begin{aligned}
(-\Delta)^s_pu(x)
&=\frac{(2x)^{sp}}{(1-x^2)^s}\lim_{\epsilon\rightarrow0}\int\limits_{\{w\in\mathbb{R}^n: |w|\geq\frac{2x\epsilon}{1-x^2} \}}\frac{\left|1-\left(1+w_1-\frac{1-x^2}{4x^2}|w|^2\right)^s_+\right|^{p-2}\left[1-\left(1+w_1-\frac{1-x^2}{4x^2}|w|^2\right)^s_+\right]}{|w|^{n+sp}}\mathrm{d}w.
\end{aligned}
\end{equation}
Now we roughly  analyze the integration term.
\begin{equation*}
\begin{aligned}
&\quad\int\limits_{ |w|\geq\frac{2x\epsilon}{1-x^2} }\frac{\left|1-\left(1+w_1-\frac{1-x^2}{4x^2}|w|^2\right)^s_+\right|^{p-2}\left[1-\left(1+w_1-\frac{1-x^2}{4x^2}|w|^2\right)^s_+\right]}{|w|^{n+sp}}\mathrm{d}w\\
= \ &\int_{-\infty}^{\infty}\mathrm{d}w_1\int\limits_{ |\bar{w}|^2\geq\frac{4x^2\epsilon^2}{(1-x^2)^2}-w_1^2}\frac{\left|1-\left(1+w_1-\frac{1-x^2}{4x^2}w_1^2-\frac{1-x^2}{4x^2}|\bar{w}|^2\right)^s_+\right|^{p-2}\left[1-\left(1+w_1-\frac{1-x^2}{4x^2}w_1^2-\frac{1-x^2}{4x^2}|\bar{w}|^2\right)^s_+\right]}{\left(w_1^2+|\bar{w}|^2\right)^{\frac{n+sp}{2}}}\mathrm{d}\bar{w}\\
= \ &C\int_{-\infty}^{\infty}\mathrm{d}w_1\int\limits_{ \rho^2\geq\frac{4x^2\epsilon^2}{(1-x^2)^2}-w_1^2}\frac{\left|1-\left(1+w_1-\frac{1-x^2}{4x^2}w_1^2-\frac{1-x^2}{4x^2}\rho^2\right)^s_+\right|^{p-2}\left[1-\left(1+w_1-\frac{1-x^2}{4x^2}w_1^2-\frac{1-x^2}{4x^2}\rho^2\right)^s_+\right]}{\left(w_1^2+\rho^2\right)^{\frac{n+sp}{2}}}\rho^{n-2}\mathrm{d}\rho\\
= \ &C\int_{-\frac{2x\epsilon}{1-x^2}}^{\frac{2x\epsilon}{1-x^2}}\mathrm{d}w_1\int_{\sqrt{\frac{4x^2\epsilon^2}{(1-x^2)^2}-w_1^2} }^{\infty}\frac{\left|1-\left(1+w_1-\frac{1-x^2}{4x^2}w_1^2-\frac{1-x^2}{4x^2}\rho^2\right)^s_+\right|^{p-2}\left[1-\left(1+w_1-\frac{1-x^2}{4x^2}w_1^2-\frac{1-x^2}{4x^2}\rho^2\right)^s_+\right]}{\left(w_1^2+\rho^2\right)^{\frac{n+sp}{2}}}\rho^{n-2}\mathrm{d}\rho\\
&+\quad C\int\limits_{|w_1|\geq\frac{2x\epsilon}{1-x^2}}\mathrm{d}w_1\int_{0 }^{\infty}\frac{\left|1-\left(1+w_1-\frac{1-x^2}{4x^2}w_1^2-\frac{1-x^2}{4x^2}\rho^2\right)^s_+\right|^{p-2}\left[1-\left(1+w_1-\frac{1-x^2}{4x^2}w_1^2-\frac{1-x^2}{4x^2}\rho^2\right)^s_+\right]}{\left(w_1^2+\rho^2\right)^{\frac{n+sp}{2}}}\rho^{n-2}\mathrm{d}\rho\\
=: \ &J_1+J_2.
\end{aligned}
\end{equation*}

Step 2. In this part, we will prove $\lim\limits_{\epsilon\rightarrow0}J_1=0.$ We still omit the constant for simplicity.
\begin{equation*}
\begin{aligned}
J_1&=\int_{-\frac{2x\epsilon}{1-x^2}}^{\frac{2x\epsilon}{1-x^2}}\mathrm{d}w_1\int_{\sqrt{\frac{4x^2\epsilon^2}{(1-x^2)^2}-w_1^2} }^{\infty}\frac{\left|1-\left(1+w_1-\frac{1-x^2}{4x^2}w_1^2-\frac{1-x^2}{4x^2}\rho^2\right)^s_+\right|^{p-2}\left[1-\left(1+w_1-\frac{1-x^2}{4x^2}w_1^2-\frac{1-x^2}{4x^2}\rho^2\right)^s_+\right]}{\left(w_1^2+\rho^2\right)^{\frac{n+sp}{2}}}\rho^{n-2}\mathrm{d}\rho\\
&=\int_{-\frac{2x\epsilon}{1-x^2}}^{\frac{2x\epsilon}{1-x^2}}\mathrm{d}w_1\int_{\sqrt{\frac{4x^2\epsilon^2}{(1-x^2)^2}-w_1^2} }^{\sqrt{\frac{2x^2}{1-x^2}}}\frac{\left|1-\left(1+w_1-\frac{1-x^2}{4x^2}w_1^2-\frac{1-x^2}{4x^2}\rho^2\right)^s_{+}\right|^{p-2}\left[1-\left(1+w_1-\frac{1-x^2}{4x^2}w_1^2-\frac{1-x^2}{4x^2}\rho^2\right)^s_{+}\right]}{\left(w_1^2+\rho^2\right)^{\frac{n+sp}{2}}}\rho^{n-2}\mathrm{d}\rho\\
&\quad+\int_{-\frac{2x\epsilon}{1-x^2}}^{\frac{2x\epsilon}{1-x^2}}\mathrm{d}w_1\int_{\sqrt{\frac{2x^2}{1-x^2}} }^{\infty}\frac{\left|1-\left(1+w_1-\frac{1-x^2}{4x^2}w_1^2-\frac{1-x^2}{4x^2}\rho^2\right)^s_+\right|^{p-2}\left[1-\left(1+w_1-\frac{1-x^2}{4x^2}w_1^2-\frac{1-x^2}{4x^2}\rho^2\right)^s_+\right]}{\left(w_1^2+\rho^2\right)^{\frac{n+sp}{2}}}\rho^{n-2}\mathrm{d}\rho\\
&=:J_{11}+J_{12}.
\end{aligned}
\end{equation*}
Firstly, we can readily estimate  $J_{12}$,
\begin{equation*}
\begin{aligned}
\lvert J_{12}\rvert&\leq\int_{-\frac{2x\epsilon}{1-x^2}}^{\frac{2x\epsilon}{1-x^2}}\mathrm{d}w_1\int_{\sqrt{\frac{2x^2}{1-x^2}} }^{\infty}\frac{\left[1-\left(1+w_1-\frac{1-x^2}{4x^2}w_1^2-\frac{1-x^2}{4x^2}\rho^2\right)^s_+\right]^{p-1}}{\left(w_1^2+\rho^2\right)^{\frac{n+sp}{2}}}\rho^{n-2}\mathrm{d}\rho\\
&\leq\int_{-\frac{2x\epsilon}{1-x^2}}^{\frac{2x\epsilon}{1-x^2}}\mathrm{d}w_1\int_{\sqrt{\frac{2x^2}{1-x^2}} }^{\infty}\frac{\rho^{n-2}}{\left(w_1^2+\rho^2\right)^{\frac{n+sp}{2}}}\mathrm{d}\rho\\
&\leq\int_{-\frac{2x\epsilon}{1-x^2}}^{\frac{2x\epsilon}{1-x^2}}\mathrm{d}w_1\int_{\sqrt{\frac{2x^2}{1-x^2}} }^{\infty}\frac{1}{\rho^{2+sp}}\mathrm{d}\rho
\leq C(x)\int_{-\frac{2x\epsilon}{1-x^2}}^{\frac{2x\epsilon}{1-x^2}}\mathrm{d}w_1\rightarrow 0 \quad \text{as} \quad \epsilon\rightarrow 0.
\end{aligned}
\end{equation*}
Secondly, we estimate $J_{11}$ by taylor expansion,
\begin{equation*}
\begin{aligned}
&\quad J_{11}\\
&=-\int_{0}^{\frac{2x\epsilon}{1-x^2}}\mathrm{d}w_1\int_{\sqrt{\frac{4x^2\epsilon^2}{(1-x^2)^2}-w_1^2} }^{\sqrt{\frac{4x^2}{(1-x^2)^2}w_1-w_1^2}}\frac{\left[\left(1+w_1-\frac{1-x^2}{4x^2}w_1^2-\frac{1-x^2}{4x^2}\rho^2\right)^s-1\right]^{p-1}}{\left(w_1^2+\rho^2\right)^{\frac{n+sp}{2}}}\rho^{n-2}\mathrm{d}\rho\\
&\quad+\int_{0}^{\frac{2x\epsilon}{1-x^2}}\mathrm{d}w_1\int_{\sqrt{\frac{4x^2}{(1-x^2)^2}w_1-w_1^2} }^{\sqrt{\frac{2x^2}{1-x^2}}}\frac{\left[1-\left(1+w_1-\frac{1-x^2}{4x^2}w_1^2-\frac{1-x^2}{4x^2}\rho^2\right)^s\right]^{p-1}}{\left(w_1^2+\rho^2\right)^{\frac{n+sp}{2}}}\rho^{n-2}\mathrm{d}\rho\\
&\quad+\int_{0}^{\frac{2x\epsilon}{1-x^2}}\mathrm{d}w_1\int_{\sqrt{\frac{4x^2\epsilon^2}{(1-x^2)^2}-w_1^2} }^{\sqrt{\frac{2x^2}{1-x^2}}}\frac{\left[1-\left(1-w_1-\frac{1-x^2}{4x^2}w_1^2-\frac{1-x^2}{4x^2}\rho^2\right)^s\right]^{p-1}}{\left(w_1^2+\rho^2\right)^{\frac{n+sp}{2}}}\rho^{n-2}\mathrm{d}\rho\\
&=\int_{0}^{\frac{2x\epsilon}{1-x^2}}\mathrm{d}w_1\int_{\sqrt{\frac{4x^2\epsilon^2}{(1-x^2)^2}-w_1^2} }^{\sqrt{\frac{4x^2}{(1-x^2)^2}w_1-w_1^2}}\frac{\left[1\!-\!\left(1\!-\!w_1\!-\!\frac{1-\!x^2}{4x^2}w_1^2\!-\!\frac{1\!-\!x^2}{4x^2}\rho^2\right)^s\right]^{p-1}\!-\!\left[\left(1\!+\!w_1\!-\!\frac{1\!-\!x^2}{4x^2}w_1^2\!-\!\frac{1\!-\!x^2}{4x^2}\rho^2\right)^s\!-\!1\right]^{p-1}}{\left(w_1^2\!+\!\rho^2\right)^{\frac{n+sp}{2}}}\rho^{n-2}\mathrm{d}\rho\\
&\quad+\int_{0}^{\frac{2x\epsilon}{1-x^2}}\mathrm{d}w_1\int_{\sqrt{\frac{4x^2}{(1-x^2)^2}w_1-w_1^2} }^{\sqrt{\frac{2x^2}{1-x^2}}}\frac{\left[1-\left(1+w_1-\frac{1-x^2}{4x^2}w_1^2-\frac{1-x^2}{4x^2}\rho^2\right)^s\right]^{p-1}}{\left(w_1^2+\rho^2\right)^{\frac{n+sp}{2}}}\rho^{n-2}\mathrm{d}\rho\\
&\quad+\int_{0}^{\frac{2x\epsilon}{1-x^2}}\mathrm{d}w_1\int_{\sqrt{\frac{4x^2}{(1-x^2)^2}w_1-w_1^2} }^{\sqrt{\frac{2x^2}{1-x^2}}}\frac{\left[1-\left(1-w_1-\frac{1-x^2}{4x^2}w_1^2-\frac{1-x^2}{4x^2}\rho^2\right)^s\right]^{p-1}}{\left(w_1^2+\rho^2\right)^{\frac{n+sp}{2}}}\rho^{n-2}\mathrm{d}\rho\\
&\leq \int_{0}^{\frac{2x\epsilon}{1-x^2}}\mathrm{d}w_1\int_{\sqrt{\frac{4x^2\epsilon^2}{(1-x^2)^2}-w_1^2} }^{\sqrt{\frac{4x^2}{(1-x^2)^2}w_1-w_1^2}}\frac{Cw_1^{p-2}\left[2\!-\!\left(1\!-\!w_1\!-\!\frac{1\!-\!x^2}{4x^2}w_1^2\!-\!\frac{1\!-\!x^2}{4x^2}\rho^2\right)^s\!-\!\left(1\!+\!w_1\!-\!\frac{1\!-\!x^2}{4x^2}w_1^2\!-\!\frac{1\!-\!x^2}{4x^2}\rho^2\right)^s\right]}{\left(w_1^2\!+\!\rho^2\right)^{\frac{n+sp}{2}}}\rho^{n-2}\mathrm{d}\rho\\
&\quad+2\int_{0}^{\frac{2x\epsilon}{1-x^2}}\mathrm{d}w_1\int_{\sqrt{\frac{4x^2}{(1-x^2)^2}w_1-w_1^2} }^{\sqrt{\frac{2x^2}{1-x^2}}}\frac{C\left[\frac{1-x^2}{4x^2}\rho^2\right]^{p-1}}{\left(w_1^2+\rho^2\right)^{\frac{n+sp}{2}}}\rho^{n-2}\mathrm{d}\rho\\
&\leq \int_{0}^{\frac{2x\epsilon}{1\!-\!x^2}}\mathrm{d}w_1\int_{\sqrt{\frac{4x^2\epsilon^2}{(1\!-\!x^2)^2}-\!w_1^2} }^{\sqrt{\frac{4x^2}{(1-\!x^2)^2}w_1\!-w_1^2}}\frac{Cw_1^{p-\!2}\left[s\frac{1\!-\!x^2}{2x^2}w_1^2\!+\!O(w_1^2)\right]}{\left(w_1^2\!+\!\rho^2\right)^{\frac{n+sp}{2}}}\rho^{n-2}\mathrm{d}\rho\!\\
&\quad+\!2C\int_{0}^{\frac{2x\epsilon}{1\!-\!x^2}}\mathrm{d}w_1\int_{\sqrt{\frac{4x^2}{(1\!-\!x^2)^2}w_1\!-\!w_1^2} }^{\sqrt{\frac{2x^2}{1\!-\!x^2}}}\frac{\left[\frac{1\!-\!x^2}{4x^2}\rho^2\right]^{p-\!1}}{\left(w_1^2\!+\!\rho^2\right)^{\frac{n+\!sp}{2}}}\rho^{n-\!2}\mathrm{d}\rho.\\
\end{aligned}
\end{equation*}
Moreover, we have 
\begin{equation*}
\begin{aligned}
&\quad\int_{0}^{\frac{2x\epsilon}{1-x^2}}\mathrm{d}w_1\int_{\sqrt{\frac{4x^2\epsilon^2}{(1-x^2)^2}-w_1^2} }^{\sqrt{\frac{4x^2}{(1-x^2)^2}w_1-w_1^2}}\frac{Cw_1^{p-2}\left[s\frac{1-x^2}{2x^2}w_1^2+O(w_1^2)\right]}{\left(w_1^2+\rho^2\right)^{\frac{n+sp}{2}}}\rho^{n-2}\mathrm{d}\rho\\
&\leq \int_{0}^{\frac{2x\epsilon}{1-x^2}}\mathrm{d}w_1\int_{\sqrt{\frac{4x^2\epsilon^2}{(1-x^2)^2}-w_1^2} }^{\sqrt{\frac{4x^2}{(1-x^2)^2}w_1-w_1^2}}\frac{Cw_1^{p}}{\left(w_1^2+\rho^2\right)^{\frac{n+sp}{2}}}\rho^{n-2}\mathrm{d}\rho\\
&\leq \int_{0}^{\frac{2x\epsilon}{1-x^2}}\mathrm{d}w_1\int_{\sqrt{\frac{4x^2\epsilon^2}{(1-x^2)^2}-w_1^2} }^{\sqrt{\frac{4x^2}{(1-x^2)^2}w_1-w_1^2}}\frac{Cw_1^{p}}{\left(w_1^2+\rho^2\right)w_1^{sp}}\mathrm{d}\rho\\
&=C\int_{0}^{\frac{2x\epsilon}{1-x^2}}w_1^{p-sp-1}\mathrm{d}w_1\int_{\sqrt{\frac{4x^2\epsilon^2}{(1-x^2)^2}-w_1^2} }^{\sqrt{\frac{4x^2}{(1-x^2)^2}w_1-w_1^2}}\frac{w_1}{\left(w_1^2+\rho^2\right)}\mathrm{d}\rho\\
&=C\int_{0}^{\frac{2x\epsilon}{1-x^2}}w_1^{p-sp-1}\mathrm{d}w_1 \Big\{\arctan\frac{\rho}{w_1}\Big\}_{\sqrt{\frac{4x^2\epsilon^2}{(1-x^2)^2}-w_1^2}}^{\sqrt{\frac{4x^2}{(1-x^2)^2}w_1-w_1^2}}\\
&\leq C \int_{0}^{\frac{2x\epsilon}{1-x^2}}w_1^{p-sp-1}\mathrm{d}w_1\rightarrow 0 \quad \text{as} \quad \epsilon\rightarrow 0.
\end{aligned}
\end{equation*}
And 
\begin{equation*}
\begin{aligned}
&\quad\int_{0}^{\frac{2x\epsilon}{1-x^2}}\mathrm{d}w_1\int_{\sqrt{\frac{4x^2}{(1-x^2)^2}w_1-w_1^2} }^{\sqrt{\frac{2x^2}{1-x^2}}}\frac{\left[\frac{1-x^2}{4x^2}\rho^2\right]^{p-1}}{\left(w_1^2+\rho^2\right)^{\frac{n+sp}{2}}}\rho^{n-2}\mathrm{d}\rho\\
&\leq C\int_{0}^{\frac{2x\epsilon}{1-x^2}}\mathrm{d}w_1\int_{\sqrt{\frac{4x^2}{(1-x^2)^2}w_1-w_1^2} }^{\sqrt{\frac{2x^2}{1-x^2}}}\frac{\rho^{2p+n-4}}{\left(w_1^2+\rho^2\right)^{\frac{n+sp}{2}}}\mathrm{d}\rho\\
&\leq 
\begin{cases}
C\int_{0}^{\frac{2x\epsilon}{1-x^2}}\mathrm{d}w_1\int_{\sqrt{\frac{4x^2}{(1-x^2)^2}w_1-w_1^2} }^{\sqrt{\frac{2x^2}{1-x^2}}}\rho^{2p-sp-4}\mathrm{d}\rho\leq C\int_{0}^{\frac{2x\epsilon}{1-x^2}}\mathrm{d}w_1\rightarrow 0 \quad \text{as} \quad \epsilon\rightarrow 0, & \text{if}\quad 2p-sp-3>0;\\
C\int_{0}^{\frac{2x\epsilon}{1-x^2}}\mathrm{d}w_1\int_{\sqrt{\frac{4x^2}{(1-x^2)^2}w_1-w_1^2} }^{\sqrt{\frac{2x^2}{1-x^2}}}\frac{1}{(w_1^2+\rho^2)^{\frac{4-(2p-sp)}{2}}}\mathrm{d}\rho\\
\leq C \int_{0}^{\frac{2x\epsilon}{1-x^2}}\mathrm{d}w_1\frac{\sqrt{\frac{2x^2}{1-x^2}}}{\left[\frac{4x^2}{(1-x^2)^2}w_1\right]^{\frac{4-(2p-sp)}{2}}}\leq C\int_{0}^{\frac{2x\epsilon}{1-x^2}}w_1^{\frac{2p-sp-4}{2}}\mathrm{d}w_1\rightarrow 0 \quad \text{as} \quad \epsilon\rightarrow 0, & \text{if}\quad 2p-sp-3\leq0.\\
\end{cases}
\end{aligned}
\end{equation*}
Hence $$J_1\rightarrow 0 \quad \text{as} \quad \epsilon\rightarrow 0.$$

Step 3. We work on $J_2$ partly,
\begin{equation*}
\begin{aligned}
J_2&=\int\limits_{|w_1|\geq\frac{2x\epsilon}{1-x^2}}\mathrm{d}w_1\int_{0 }^{\infty}\frac{\left|1-\left(1+w_1-\frac{1-x^2}{4x^2}w_1^2-\frac{1-x^2}{4x^2}\rho^2\right)^s_+\right|^{p-2}\left[1-\left(1+w_1-\frac{1-x^2}{4x^2}w_1^2-\frac{1-x^2}{4x^2}\rho^2\right)^s_+\right]}{\left(w_1^2+\rho^2\right)^{\frac{n+sp}{2}}}\rho^{n-2}\mathrm{d}\rho\\
&=\int_{\frac{2x\epsilon}{1-x^2}}^{\infty}\mathrm{d}w_1\int_{0 }^{\infty}\frac{\left|1-\left(1+w_1-\frac{1-x^2}{4x^2}w_1^2-\frac{1-x^2}{4x^2}\rho^2\right)^s_+\right|^{p-2}\left[1-\left(1+w_1-\frac{1-x^2}{4x^2}w_1^2-\frac{1-x^2}{4x^2}\rho^2\right)^s_+\right]}{\left(w_1^2+\rho^2\right)^{\frac{n+sp}{2}}}\rho^{n-2}\mathrm{d}\rho\\
&\quad+\int_{\frac{2x\epsilon}{1-x^2}}^{\infty}\mathrm{d}w_1\int_{0 }^{\infty}\frac{\left[1-\left(1-w_1-\frac{1-x^2}{4x^2}w_1^2-\frac{1-x^2}{4x^2}\rho^2\right)^s_+\right]^{p-1}}{\left(w_1^2+\rho^2\right)^{\frac{n+sp}{2}}}\rho^{n-2}\mathrm{d}\rho.\\
\end{aligned}
\end{equation*}
Set $\displaystyle y=\frac{\rho}{w_1},$ then 
\begin{equation*}
\begin{aligned}
J_2
&=\int_{\frac{2x\epsilon}{1-x^2}}^{\infty}\frac{1}{w_1^{1+sp}}\mathrm{d}w_1\int_{0 }^{\infty}\frac{\left|1-\left(1+w_1-\frac{1-x^2}{4x^2}(1+y^2)w_1^2\right)^s_+\right|^{p-2}\left[1-\left(1+w_1-\frac{1-x^2}{4x^2}(1+y^2)w_1^2\right)^s_+\right]}{\left(1+y^2\right)^{\frac{n+sp}{2}}}y^{n-2}\mathrm{d}y\\
&\quad+\int_{\frac{2x\epsilon}{1-x^2}}^{\infty}\frac{1}{w_1^{1+sp}}\mathrm{d}w_1\int_{0 }^{\infty}\frac{\left[1-\left(1-w_1-\frac{1-x^2}{4x^2}(1+y^2)w_1^2\right)^s_+\right]^{p-1}}{\left(1+y^2\right)^{\frac{n+sp}{2}}}y^{n-2}\mathrm{d}y\\
&=:J_{21}+J_{22},
\end{aligned}
\end{equation*}
where 
\begin{equation*}
\begin{aligned}
J_{22}
&=\int_{\frac{2x\epsilon}{1-x^2}}^{\infty}\frac{1}{w_1^{1+sp}}\mathrm{d}w_1\int_{0 }^{\infty}\frac{\left[1-\left(1-w_1-\frac{1-x^2}{4x^2}(1+y^2)w_1^2\right)^s_+\right]^{p-1}}{\left(1+y^2\right)^{\frac{n+sp}{2}}}y^{n-2}\mathrm{d}y\\
&=\int_{\frac{2x\epsilon}{1-x^2}}^{\frac{2x}{1+x}}\frac{1}{w_1^{1+sp}}\mathrm{d}w_1\int_{0 }^{\sqrt{\frac{4x^2}{1-x^2}\frac{1-w_1}{w_1^2}}-1}\frac{\left[1-\left(1-w_1-\frac{1-x^2}{4x^2}(1+y^2)w_1^2\right)^s\right]^{p-1}}{\left(1+y^2\right)^{\frac{n+sp}{2}}}y^{n-2}\mathrm{d}y\\
&\quad+\int_{\frac{2x\epsilon}{1-x^2}}^{\frac{2x}{1+x}}\frac{1}{w_1^{1+sp}}\mathrm{d}w_1\int_{\sqrt{\frac{4x^2}{1-x^2}\frac{1-w_1}{w_1^2}}-1}^{\infty}\frac{y^{n-2}}{\left(1+y^2\right)^{\frac{n+sp}{2}}}\mathrm{d}y+\int_{\frac{2x}{1+x}}^{\infty}\frac{1}{w_1^{1+sp}}\mathrm{d}w_1\int_{0}^{\infty}\frac{y^{n-2}}{\left(1+y^2\right)^{\frac{n+sp}{2}}}\mathrm{d}y\\
&=:\textcircled{1}+\textcircled{2}+\textcircled{3}.
\end{aligned}
\end{equation*}
And
\begin{equation*}
\begin{aligned}
J_{21}
&=\int_{\frac{2x\epsilon}{1-x^2}}^{\infty}\frac{1}{w_1^{1+sp}}\mathrm{d}w_1\int_{0 }^{\infty}\frac{\left|1-\left(1+w_1-\frac{1-x^2}{4x^2}(1+y^2)w_1^2\right)^s_+\right|^{p-2}\left[1-\left(1+w_1-\frac{1-x^2}{4x^2}(1+y^2)w_1^2\right)^s_+\right]}{\left(1+y^2\right)^{\frac{n+sp}{2}}}y^{n-2}\mathrm{d}y\\
&=-\int_{\frac{2x\epsilon}{1-x^2}}^{\frac{4x^2}{1-x^2}}\frac{1}{w_1^{1+sp}}\int_{0 }^{\sqrt{\frac{4x^2}{1-x^2}\frac{1}{w_1}-1}}\frac{\left[\left(1+w_1-\frac{1-x^2}{4x^2}(1+y^2)w_1^2\right)^s-1\right]^{p-1}}{\left(1+y^2\right)^{\frac{n+sp}{2}}}y^{n-2}\mathrm{d}y\mathrm{d}w_1\\
&\quad+\int_{\frac{2x\epsilon}{1-x^2}}^{\frac{4x^2}{1-x^2}}\frac{1}{w_1^{1+sp}}\int_{\sqrt{\frac{4x^2}{1-x^2}\frac{1}{w_1}-1}}^{\sqrt{\frac{4x^2}{1-x^2}\frac{1+w_1}{w_1^2}-1}}\frac{\left[1-\left(1+w_1-\frac{1-x^2}{4x^2}(1+y^2)w_1^2\right)^s\right]^{p-1}}{\left(1+y^2\right)^{\frac{n+sp}{2}}}y^{n-2}\mathrm{d}y\mathrm{d}w_1\\
&\quad+\int_{\frac{2x\epsilon}{1-x^2}}^{\frac{4x^2}{1-x^2}}\frac{1}{w_1^{1+sp}}\int_{\sqrt{\frac{4x^2}{1-x^2}\frac{1+w_1}{w_1^2}-1}}^{\infty}\frac{y^{n-2}}{\left(1+y^2\right)^{\frac{n+sp}{2}}}\mathrm{d}y\mathrm{d}w_1\\
&\quad+\int_{\frac{4x^2}{1-x^2}}^{\frac{2x}{1-x}}\frac{1}{w_1^{1+sp}}\int_{0}^{\sqrt{\frac{4x^2}{1-x^2}\frac{1+w_1}{w_1^2}-1}}\frac{\left[1-\left(1+w_1-\frac{1-x^2}{4x^2}(1+y^2)w_1^2\right)^s\right]^{p-1}}{\left(1+y^2\right)^{\frac{n+sp}{2}}}y^{n-2}\mathrm{d}y\mathrm{d}w_1\\
&\quad+\int_{\frac{4x^2}{1-x^2}}^{\frac{2x}{1-x}}\frac{1}{w_1^{1+sp}}\int_{\sqrt{\frac{4x^2}{1-x^2}\frac{1+w_1}{w_1^2}-1}}^{\infty}\frac{y^{n-2}}{\left(1+y^2\right)^{\frac{n+sp}{2}}}\mathrm{d}y\mathrm{d}w_1\\
&\quad+\int_{\frac{2x}{1-x}}^{\infty}\frac{1}{w_1^{1+sp}}\int_{0}^{\infty}\frac{y^{n-2}}{\left(1+y^2\right)^{\frac{n+sp}{2}}}\mathrm{d}y\mathrm{d}w_1\\
&=:\textcircled{1}'+\textcircled{2}'+\textcircled{3}'+\textcircled{4}'+\textcircled{5}'+\textcircled{6}'.
\end{aligned}
\end{equation*}

Step 4. In this step, we will figure out $\textcircled{1}+\textcircled{1}'$,
\begin{equation*}
\begin{aligned}
&\textcircled{1}+\textcircled{1}'\\
= \ &\int_{\frac{2x\epsilon}{1-x^2}}^{\frac{1}{2}}\frac{1}{w_1^{1+sp}}\int_{0 }^{\sqrt{\frac{4x^2}{1-x^2}\frac{1}{w_1}-1}}\frac{\left|1-\left(1+w_1-\frac{1-x^2}{4x^2}(1+y^2)w_1^2\right)^s\right|^{p-2}\left[1-\left(1+w_1-\frac{1-x^2}{4x^2}(1+y^2)w_1^2\right)^s\right]}{\left(1+y^2\right)^{\frac{n+sp}{2}}}y^{n-2}\mathrm{d}y\mathrm{d}w_1\\
&+\int_{\frac{2x\epsilon}{1-x^2}}^{\frac{1}{2}}\frac{1}{w_1^{1+sp}}\int_{\sqrt{\frac{4x^2}{1-x^2}\frac{1}{w_1}-1} }^{\sqrt{\frac{4x^2}{1-x^2}\frac{1-w_1}{w_1^2}-1}}\frac{\left[1-\left(1-w_1-\frac{1-x^2}{4x^2}(1+y^2)w_1^2\right)^s\right]^{p-1}}{\left(1+y^2\right)^{\frac{n+sp}{2}}}y^{n-2}\mathrm{d}y\mathrm{d}w_1\\
&+\int_{\frac{1}{2}}^{\frac{2x}{1+x}}\frac{1}{w_1^{1+sp}}\int_{0 }^{\sqrt{\frac{4x^2}{1-x^2}\frac{1-w_1}{w_1^2}-1}}\frac{\left[1-\left(1-w_1-\frac{1-x^2}{4x^2}(1+y^2)w_1^2\right)^s\right]^{p-1}}{\left(1+y^2\right)^{\frac{n+sp}{2}}}y^{n-2}\mathrm{d}y\mathrm{d}w_1\\
&-\int_{\frac{1}{2}}^{\frac{4x^2}{1-x^2}}\frac{1}{w_1^{1+sp}}\int_{0 }^{\sqrt{\frac{4x^2}{1-x^2}\frac{1}{w_1}-1}}\frac{\left[\left(1+w_1-\frac{1-x^2}{4x^2}(1+y^2)w_1^2\right)^s-1\right]^{p-1}}{\left(1+y^2\right)^{\frac{n+sp}{2}}}y^{n-2}\mathrm{d}y\mathrm{d}w_1\\
=: \ &\left(\uppercase\expandafter{\romannumeral1}\right)+\left( \uppercase\expandafter{\romannumeral2}\right) +\left( \uppercase\expandafter{\romannumeral3}\right) -\left( \uppercase\expandafter{\romannumeral4}\right).
\end{aligned}
\end{equation*}
At the moment, there are 11 terms: $\textcircled{2},$ $\textcircled{3},$ $\textcircled{2}',$ $\textcircled{3}',$ $\textcircled{4}',$ $\textcircled{5}',$ $\textcircled{6}',$ $\left(\uppercase\expandafter{\romannumeral1}\right),$ $\left( \uppercase\expandafter{\romannumeral2}\right),$ $\left( \uppercase\expandafter{\romannumeral3}\right),$ $\left( \uppercase\expandafter{\romannumeral4}\right).$

Step 5. From now on, we will estimate item by item. Given \eqref{zuichuguji}, in this step, we claim that $\textcircled{4}', \textcircled{5}', \textcircled{6}'$ are uniformly bounded when multiplied by $(1-x)^{-s}$ when $x$ is close to $1$.
\begin{equation*}
\begin{aligned}
(1-x)^{-s}\left(\textcircled{4}'+\textcircled{5}'\right)&\leq C(1-x)^{-s} \int_{\frac{4x^2}{1-x^2}}^{\frac{2x}{1-x}}\frac{1}{w_1^{1+sp}}\int_{0}^{\infty}\frac{y^{n-2}}{\left(1+y^2\right)^{\frac{n+sp}{2}}}\mathrm{d}y\mathrm{d}w_1\\
&\leq C(1-x)^{-s} \int_{\frac{4x^2}{1-x^2}}^{\frac{2x}{1-x}}\frac{1}{w_1^{1+sp}}\mathrm{d}w_1\\
&\leq C(1-x)^{-s}\left(\frac{1-x^2}{4x^2}\right)^{sp}\\
&\leq C(1-x)^{s(p-1)}.
\end{aligned}
\end{equation*}
And
\begin{equation*}
\begin{aligned}
(1-x)^{-s}\textcircled{6}'&\leq C(1-x)^{-s} \int_{\frac{2x}{1-x}}^{\infty}\frac{1}{w_1^{1+sp}}\int_{0}^{\infty}\frac{y^{n-2}}{\left(1+y^2\right)^{\frac{n+sp}{2}}}\mathrm{d}y\mathrm{d}w_1\\
&\leq C(1-x)^{-s} \int_{\frac{4x^2}{1-x^2}}^{\frac{2x}{1-x}}\frac{1}{w_1^{1+sp}}\mathrm{d}w_1\\
&\leq C(1-x)^{-s}\left(\frac{1-x}{2x}\right)^{sp}\\
&\leq C(1-x)^{s(p-1)}.
\end{aligned}
\end{equation*}

Step 6. We assert that $\textcircled{3}$ can be replaced by 
\begin{equation}
\int_{1}^{\infty}\frac{1}{w_1^{1+sp}}\mathrm{d}w_1\int_{0}^{\infty}\frac{y^{n-2}}{\left(1+y^2\right)^{\frac{n+sp}{2}}}\mathrm{d}y=\frac{1}{sp}\int_{0}^{\infty}\frac{y^{n-2}}{\left(1+y^2\right)^{\frac{n+sp}{2}}}\mathrm{d}y.
\end{equation}
This is because 
\begin{equation*}
\begin{aligned}
&\quad(1-x)^{-s}\int_{\frac{2x}{1+x}}^{1}\frac{1}{w_1^{1+sp}}\mathrm{d}w_1\int_{0}^{\infty}\frac{y^{n-2}}{\left(1+y^2\right)^{\frac{n+sp}{2}}}\mathrm{d}y\\
&\leq C(1-x)^{-s}\int_{\frac{2x}{1+x}}^{1}\frac{1}{w_1^{1+sp}}\mathrm{d}w_1\\
&=C(1-x)^{-s}\left(\frac{1+x}{2x}\right)^{sp}\left[1-\left(1-\frac{1-x}{1+x}\right)^{sp}\right]\\
&\leq C(1-x)^{-s}\left(\frac{1+x}{2x}\right)^{sp}\left[C\cdot sp\frac{1-x}{1+x}\right]\\
&\leq C(1-x)^{1-s}.
\end{aligned}
\end{equation*}

Step 7. We will reformulate  $\left( \uppercase\expandafter{\romannumeral3}\right)$ as 
\begin{equation*}
\int_{\frac{1}{2}}^{1}\frac{\left[1-\left(1-w_1\right)^s\right]^{p-1}}{w_1^{1+sp}}\mathrm{d}w_1\int_{0}^{\infty}\frac{y^{n-2}}{\left(1+y^2\right)^{\frac{n+sp}{2}}}\mathrm{d}y.
\end{equation*}
Firstly, we will substitute $\left( \uppercase\expandafter{\romannumeral3}\right)$  by
\begin{equation*}
\int_{\frac{1}{2}}^{\frac{2x}{1+x}}\frac{1}{w_1^{1+sp}}\int_{0}^{\sqrt{\frac{4x^2}{1-x^2}\frac{1-w_1}{w_1^2}-1}}\frac{\left[1-\left(1-w_1\right)^s\right]^{p-1}}{\left(1+y^2\right)^{\frac{n+sp}{2}}}y^{n-2}\mathrm{d}y\mathrm{d}w_1.
\end{equation*}
Since 
\begin{equation*}
\begin{aligned}
&\quad(1-x)^{-s}\int_{\frac{1}{2}}^{\frac{2x}{1+x}}\frac{1}{w_1^{1+sp}}\int_{0}^{\sqrt{\frac{4x^2}{1-x^2}\frac{1-w_1}{w_1^2}-1}}\frac{\left[1-\left(1-w_1-\frac{1-x^2}{4x^2}(1+y^2)w_1^2\right)^s\right]^{p-1}\!-\!\left[1-\left(1-w_1\right)^s\right]^{p-1}}{\left(1+y^2\right)^{\frac{n+sp}{2}}}y^{n-2}\mathrm{d}y\mathrm{d}w_1\\
&\leq C(1-x)^{-s}\int_{\frac{1}{2}}^{\frac{2x}{1+x}}\frac{1}{w_1^{1+sp}}\int_{0}^{\sqrt{\frac{4x^2}{1-x^2}\frac{1-w_1}{w_1^2}-1}}\frac{w_1^{s(p-2)}\left[\left(1-w_1\right)^s-\left(1-w_1-\frac{1-x^2}{4x^2}(1+y^2)w_1^2\right)^s\right]}{\left(1+y^2\right)^{\frac{n+sp}{2}}}y^{n-2}\mathrm{d}y\mathrm{d}w_1\\
&\leq C(1-x)^{-s}\int_{\frac{1}{2}}^{\frac{2x}{1+x}}\frac{1}{w_1^{1+2s}}\int_{0}^{\sqrt{\frac{4x^2}{1-x^2}\frac{1-w_1}{w_1^2}-1}}\frac{\left( \frac{1-x^2}{4x^2}\right) ^s(1+y^2)^sw_1^{2s}}{\left(1+y^2\right)^{\frac{n+sp}{2}}}y^{n-2}\mathrm{d}y\mathrm{d}w_1\\
&\leq C \int_{\frac{1}{2}}^{\frac{2x}{1+x}}\frac{1}{w_1}\int_{0}^{\sqrt{\frac{4x^2}{1-x^2}\frac{1-w_1}{w_1^2}-1}}\frac{1}{\left(1+y^2\right)^{\frac{2+s(p-2)}{2}}}\mathrm{d}y\mathrm{d}w_1\leq C\int_{\frac{1}{2}}^{\frac{2x}{1+x}}\frac{1}{w_1}\mathrm{d}w_1\leq C.\\
\end{aligned}
\end{equation*}
Moreover, $\left( \uppercase\expandafter{\romannumeral3}\right)$ can be replaced by
\begin{equation*}
\int_{\frac{1}{2}}^{\frac{2x}{1+x}}\frac{1}{w_1^{1+sp}}\int_{0}^{\infty}\frac{\left[1-\left(1-w_1\right)^s\right]^{p-1}}{\left(1+y^2\right)^{\frac{n+sp}{2}}}y^{n-2}\mathrm{d}y\mathrm{d}w_1.
\end{equation*}
Due to the fact that 
\begin{equation*}
\begin{aligned}
&\quad(1-x)^{-s}\int_{\frac{1}{2}}^{\frac{2x}{1+x}}\frac{1}{w_1^{1+sp}}\int_{\sqrt{\frac{4x^2}{1-x^2}\frac{1-w_1}{w_1^2}-1}}^{\infty}\frac{\left[1-\left(1-w_1\right)^s\right]^{p-1}}{\left(1+y^2\right)^{\frac{n+sp}{2}}}y^{n-2}\mathrm{d}y\mathrm{d}w_1\\
&\leq C(1-x)^{-s}\int_{\frac{1}{2}}^{\frac{2x}{1+x}}\frac{1}{w_1^{1+sp}}\int_{\sqrt{\frac{4x^2}{1-x^2}\frac{1-w_1}{w_1^2}-1}}^{\infty}\frac{y^{n-2}}{\left(1+y^2\right)^{\frac{n+sp}{2}}}\mathrm{d}y\mathrm{d}w_1\\
&=C(1-x)^{-s}\int_{\frac{1}{2}}^{\frac{2x}{1+x}}\frac{1}{w_1^{1+sp}}\int_{\sqrt{\frac{\left(\frac{2x}{1+x}-w_1\right)\left(w_1+\frac{2x}{1-x}\right)}{w_1^2}}}^{\infty}\frac{y^{n-2}}{\left(1+y^2\right)^{\frac{n+sp}{2}}}\mathrm{d}y\mathrm{d}w_1\\
&\leq C(1-x)^{-s}\int_{\frac{1}{2}}^{\frac{2x}{1+x}}\frac{1}{w_1^{1+sp}}\int_{\sqrt{\frac{\left(\frac{2x}{1+x}-w_1\right)\left(w_1+\frac{2x}{1-x}\right)}{w_1^2}}}^{\infty}\frac{1}{y^{1+2s}\left(1+y^2\right)^{\frac{1+s(p-2)}{2}}}\mathrm{d}y\mathrm{d}w_1\\
&\leq C(1-x)^{-s}\int_{\frac{1}{2}}^{\frac{2x}{1+x}}\frac{1}{w_1^{1+sp}}\int_{\sqrt{\frac{\left(\frac{2x}{1+x}-w_1\right)\left(w_1+\frac{2x}{1-x}\right)}{w_1^2}}}^{\infty}\frac{1}{y^{1+2s}}\mathrm{d}y\mathrm{d}w_1\\
&\leq C(1-x)^{-s}\int_{\frac{1}{2}}^{\frac{2x}{1+x}}\frac{1}{w_1^{1+sp}}{\frac{w_1^{2s}}{\left(\frac{2x}{1+x}-w_1\right)^{s}\left(w_1+\frac{2x}{1-x}\right)^s}}\mathrm{d}w_1\\
&\leq C\int_{\frac{1}{2}}^{\frac{2x}{1+x}}\left( \frac{2x}{1+x}-w_1\right)^{-s}\mathrm{d}w_1\leq C.
\end{aligned}
\end{equation*}
Furthermore,   we transform $\left( \uppercase\expandafter{\romannumeral3}\right)$ into
\begin{equation*}
\int_{\frac{1}{2}}^{1}\frac{1}{w_1^{1+sp}}\int_{0}^{\infty}\frac{\left[1-\left(1-w_1\right)^s\right]^{p-1}}{\left(1+y^2\right)^{\frac{n+sp}{2}}}y^{n-2}\mathrm{d}y\mathrm{d}w_1.
\end{equation*}
This is because
\begin{equation*}
\begin{aligned}
&\quad(1-x)^{-s}\int_{\frac{2x}{1+x}}^1\frac{1}{w_1^{1+sp}}\int_{0}^{\infty}\frac{\left[1-\left(1-w_1\right)^s\right]^{p-1}}{\left(1+y^2\right)^{\frac{n+sp}{2}}}y^{n-2}\mathrm{d}y\mathrm{d}w_1\\
&\leq C(1-x)^{-s}\int_{\frac{2x}{1+x}}^1\frac{1}{w_1^{1+sp}}\mathrm{d}w_1\\
&\leq C(1-x)^{-s}\left(\frac{1+x}{2x}\right)^{sp}\left[1-\left(1-\frac{1-x}{1+x}\right)^{sp}\right]\\
&\leq C(1-x)^{-s}\left(\frac{1+x}{2x}\right)^{sp}\left[C\cdot sp\frac{1-x}{1+x}\right]\leq C(1-x)^{1-s}.
\end{aligned}
\end{equation*}

Step 8. In this part,  we are going to replace $\left( \uppercase\expandafter{\romannumeral1}\right)$
by 
\begin{equation}
\int_{0}^{\frac{1}{2}}\frac{\left[1-\left(1-w_1\right)^s\right]^{p-1}-\left[\left(1+w_1\right)^s-1\right]^{p-1}}{w_1^{1+sp}}\mathrm{d}w_1\int_{0}^{\infty}\frac{y^{n-2}}{\left(1+y^2\right)^{\frac{n+sp}{2}}}\mathrm{d}y.
\end{equation}
Firstly, we will reduce 
$\left( \uppercase\expandafter{\romannumeral1}\right)$
to 
\begin{equation*}
\int_{\frac{2x\epsilon}{1-x}}^{\frac{1}{2}}\frac{1}{w_1^{1+sp}}\int_{0}^{\sqrt{\frac{4x^2}{1-x^2}\frac{1}{w_1}-1}}\frac{\left[1-\left(1-w_1\right)^s\right]^{p-1}-\left[\left(1+w_1\right)^s-1\right]^{p-1}}{\left(1+y^2\right)^{\frac{n+sp}{2}}}y^{n-2}\mathrm{d}y\mathrm{d}w_1.
\end{equation*}
On the one hand, 
\begin{equation*}
\begin{aligned}
&\quad(1-x)^{-s}\int_{\frac{2x\epsilon}{1-x}}^{\frac{1}{2}}\frac{1}{w_1^{1+sp}}\int_{0}^{\sqrt{\frac{4x^2}{1-x^2}\frac{1}{w_1}-1}}\frac{\left[1-\left(1-w_1-\frac{1-x^2}{4x^2}(1+y^2)w_1^2\right)^s\right]^{p-1}\!-\!\left[1-\left(1-w_1\right)^s\right]^{p-1}}{\left(1+y^2\right)^{\frac{n+sp}{2}}}y^{n-2}\mathrm{d}y\mathrm{d}w_1\\
&= (1-x)^{-s}\int_{\frac{2x\epsilon}{1-x}}^{\frac{1}{4}}\frac{1}{w_1^{1+sp}}\int_{0}^{\sqrt{\frac{4x^2}{1-x^2}\frac{1}{w_1}-1}}\frac{\left[1-\left(1-w_1-\frac{1-x^2}{4x^2}(1+y^2)w_1^2\right)^s\right]^{p-1}\!-\!\left[1-\left(1-w_1\right)^s\right]^{p-1}}{\left(1+y^2\right)^{\frac{n+sp}{2}}}y^{n-2}\mathrm{d}y\mathrm{d}w_1\\
&\quad+(1-x)^{-s}\int_{\frac{1}{4}}^{\frac{1}{2}}\frac{1}{w_1^{1+sp}}\int_{0}^{\sqrt{\frac{4x^2}{1-x^2}\frac{1}{w_1}-1}}\frac{\left[1-\left(1-w_1-\frac{1-x^2}{4x^2}(1+y^2)w_1^2\right)^s\right]^{p-1}\!-\!\left[1-\left(1-w_1\right)^s\right]^{p-1}}{\left(1+y^2\right)^{\frac{n+sp}{2}}}y^{n-2}\mathrm{d}y\mathrm{d}w_1\\
&\leq C(1-x)^{-s}\int_{\frac{2x\epsilon}{1-x}}^{\frac{1}{4}}\frac{1}{w_1^{1+sp}}\int_{0}^{\sqrt{\frac{4x^2}{1-x^2}\frac{1}{w_1}-1}}\frac{w_1^{p-2}\left[\left(1-w_1\right)^s-\left(1-w_1-\frac{1-x^2}{4x^2}(1+y^2)w_1^2\right)^s\right]}{\left(1+y^2\right)^{\frac{n+sp}{2}}}y^{n-2}\mathrm{d}y\mathrm{d}w_1\\
&\quad+C(1-x)^{-s}\int_{\frac{1}{4}}^{\frac{1}{2}}\frac{1}{w_1^{1+sp}}\int_{0}^{\sqrt{\frac{4x^2}{1-x^2}\frac{1}{w_1}-1}}\frac{w_1^{s(p-2)}\left[\left(1-w_1\right)^s-\left(1-w_1-\frac{1-x^2}{4x^2}(1+y^2)w_1^2\right)^s\right]}{\left(1+y^2\right)^{\frac{n+sp}{2}}}y^{n-2}\mathrm{d}y\mathrm{d}w_1\\
&\leq C(1-x)^{-s}\int_{\frac{2x\epsilon}{1-x}}^{\frac{1}{4}}\frac{1}{w_1^{1+sp}}\int_{0}^{\sqrt{\frac{4x^2}{1-x^2}\frac{1}{w_1}-1}}\frac{w_1^{p-2}\left(1-w_1\right)^s\left[1-\left(1-\frac{1-x^2}{4x^2}(1+y^2)\frac{w_1^2}{1-w_1}\right)^s\right]}{\left(1+y^2\right)^{\frac{n+sp}{2}}}y^{n-2}\mathrm{d}y\mathrm{d}w_1\\
&\quad+C(1-x)^{-s}\int_{\frac{1}{4}}^{\frac{1}{2}}\frac{1}{w_1^{1+sp}}\int_{0}^{\sqrt{\frac{4x^2}{1-x^2}\frac{1}{w_1}-1}}\frac{w_1^{s(p-2)}\left( \frac{1-x^2}{4x^2}\right) ^s(1+y^2)^sw_1^{2s}}{\left(1+y^2\right)^{\frac{n+sp}{2}}}y^{n-2}\mathrm{d}y\mathrm{d}w_1.
\end{aligned}
\end{equation*}
Further,
\begin{equation*}
\begin{aligned}
&\quad(1-x)^{-s}\int_{\frac{2x\epsilon}{1-x}}^{\frac{1}{4}}\frac{1}{w_1^{1+sp}}\int_{0}^{\sqrt{\frac{4x^2}{1-x^2}\frac{1}{w_1}-1}}\frac{w_1^{p-2}\left(1-w_1\right)^s\left[1-\left(1-\frac{1-x^2}{4x^2}(1+y^2)\frac{w_1^2}{1-w_1}\right)^s\right]}{\left(1+y^2\right)^{\frac{n+sp}{2}}}y^{n-2}\mathrm{d}y\mathrm{d}w_1\\
&\leq C(1-x)^{-s}\int_{\frac{2x\epsilon}{1-x}}^{\frac{1}{4}}\frac{1}{w_1^{1+sp}}\int_{0}^{\sqrt{\frac{4x^2}{1-x^2}\frac{1}{w_1}-1}}\frac{w_1^{p-2}\left(1-w_1\right)^s\left[s\frac{1-x^2}{4x^2}(1+y^2)\frac{w_1^2}{1-w_1}\right]}{\left(1+y^2\right)^{\frac{n+sp}{2}}}y^{n-2}\mathrm{d}y\mathrm{d}w_1\\
&\leq C(1-x)^{1-s}\int_{\frac{2x\epsilon}{1-x}}^{\frac{1}{4}}w_1^{p-sp-1}\int_{0}^{\sqrt{\frac{4x^2}{1-x^2}\frac{1}{w_1}-1}}\frac{1}{\left(1+y^2\right)^{\frac{sp}{2}}}\mathrm{d}y\mathrm{d}w_1\\
&\leq\begin{cases}
C(1-x)^{1-s}\int_{\frac{2x\epsilon}{1-x}}^{\frac{1}{4}}w_1^{p-sp-1}\mathrm{d}w_1\leq C(1-x)^{1-s}, & \text{if} \quad sp>2;\\
\\
C(1-x)^{1-s}\int_{\frac{2x\epsilon}{1-x}}^{\frac{1}{4}}w_1^{p-sp-1}\left(\frac{4x^2}{1-x^2}\frac{1}{w_1}\right)^{1-\frac{sp}{2}}\mathrm{d}w_1
\\\quad \leq C(1-x)^{\frac{s(p-2)}{2}}\int_{\frac{2x\epsilon}{1-x}}^{\frac{1}{4}}w_1^{p-sp/2-2}\mathrm{d}w_1\leq C(1-x)^{\frac{s(p-2)}{2}},&  \text{if} \quad sp\leq 2.
\end{cases}
\end{aligned}
\end{equation*}
And
\begin{equation*}
\begin{aligned}
&\quad(1-x)^{-s}\int_{\frac{1}{4}}^{\frac{1}{2}}\frac{1}{w_1^{1+sp}}\int_{0}^{\sqrt{\frac{4x^2}{1-x^2}\frac{1}{w_1}-1}}\frac{w_1^{s(p-2)}\left( \frac{1-x^2}{4x^2}\right) ^s(1+y^2)^sw_1^{2s}}{\left(1+y^2\right)^{\frac{n+sp}{2}}}y^{n-2}\mathrm{d}y\mathrm{d}w_1\\
&\leq C(1-x)^{-s}\int_{\frac{1}{4}}^{\frac{1}{2}}\frac{1}{w_1}\int_{0}^{\sqrt{\frac{4x^2}{1-x^2}\frac{1}{w_1}-1}}\frac{\left( \frac{1-x^2}{4x^2}\right) ^s(1+y^2)^s}{\left(1+y^2\right)^{\frac{n+sp}{2}}}y^{n-2}\mathrm{d}y\mathrm{d}w_1\\
&\leq C \int_{\frac{1}{4}}^{\frac{1}{2}}\frac{1}{w_1}\int_{0}^{\sqrt{\frac{4x^2}{1-x^2}\frac{1}{w_1}-1}}\frac{1}{\left(1+y^2\right)^{\frac{2+s(p-2)}{2}}}\mathrm{d}y\mathrm{d}w_1\leq C.\\
\end{aligned}
\end{equation*}
On the other hand, we have 
\begin{equation*}
\begin{aligned}
&\quad(1-x)^{-s}\int_{\frac{2x\epsilon}{1-x}}^{\frac{1}{2}}\frac{1}{w_1^{1+sp}}\int_{0}^{\sqrt{\frac{4x^2}{1-x^2}\frac{1}{w_1}-1}}\frac{\left[\left(1+w_1\right)^s-1\right]^{p-1}\!-\!\left[\left(1+w_1-\frac{1-x^2}{4x^2}(1+y^2)w_1^2\right)^s-1\right]^{p-1}}{\left(1+y^2\right)^{\frac{n+sp}{2}}}y^{n-2}\mathrm{d}y\mathrm{d}w_1\\
&\leq C(1-x)^{-s}\int_{\frac{2x\epsilon}{1-x}}^{\frac{1}{2}}\frac{1}{w_1^{1+sp}}\int_{0}^{\sqrt{\frac{4x^2}{1-x^2}\frac{1}{w_1}-1}}\frac{w_1^{p-2}\left[\left(1+w_1\right)^s-\left(1+w_1-\frac{1-x^2}{4x^2}(1+y^2)w_1^2\right)^s\right]}{\left(1+y^2\right)^{\frac{n+sp}{2}}}y^{n-2}\mathrm{d}y\mathrm{d}w_1\\
&=C(1-x)^{-s}\int_{\frac{2x\epsilon}{1-x}}^{\frac{1}{2}}\frac{1}{w_1^{1+sp}}\int_{0}^{\sqrt{\frac{4x^2}{1-x^2}\frac{1}{w_1}-1}}\frac{w_1^{p-2}\left(1+w_1\right)^s\left[1-\left(1-\frac{1-x^2}{4x^2}(1+y^2)\frac{w_1^2}{1+w_1}\right)^s\right]}{\left(1+y^2\right)^{\frac{n+sp}{2}}}y^{n-2}\mathrm{d}y\mathrm{d}w_1\\
&\leq C(1-x)^{-s}\int_{\frac{2x\epsilon}{1-x}}^{\frac{1}{2}}\frac{1}{w_1^{1+sp}}\int_{0}^{\sqrt{\frac{4x^2}{1-x^2}\frac{1}{w_1}-1}}\frac{w_1^{p-2}\left(1+w_1\right)^s\frac{1-x^2}{4x^2}(1+y^2)\frac{w_1^2}{1+w_1}}{\left(1+y^2\right)^{\frac{n+sp}{2}}}y^{n-2}\mathrm{d}y\mathrm{d}w_1\\
&\leq C(1-x)^{1-s}\int_{\frac{2x\epsilon}{1-x}}^{\frac{1}{2}}w_1^{p-sp-1}\int_{0}^{\sqrt{\frac{4x^2}{1-x^2}\frac{1}{w_1}-1}}\frac{1}{\left(1+y^2\right)^{\frac{sp}{2}}}\mathrm{d}y\mathrm{d}w_1\\
&\leq\begin{cases}
C(1-x)^{1-s}, & \text{if} \quad sp>2;\\
C(1-x)^{\frac{s(p-2)}{2}}, & \text{if} \quad sp\leq 2.
\end{cases}
\end{aligned}
\end{equation*}
Moreover, we derive from the above arguments that $\left( \uppercase\expandafter{\romannumeral1}\right)$
can be replaced by 
\begin{equation*}
\int_{0}^{\frac{1}{2}}\frac{1}{w_1^{1+sp}}\int_{0}^{\sqrt{\frac{4x^2}{1-x^2}\frac{1}{w_1}-1}}\frac{\left[1-\left(1-w_1\right)^s\right]^{p-1}-\left[\left(1+w_1\right)^s-1\right]^{p-1}}{\left(1+y^2\right)^{\frac{n+sp}{2}}}y^{n-2}\mathrm{d}y\mathrm{d}w_1.
\end{equation*} 
Furthermore, we conclude this step by the following estimate.
\begin{equation*}
\begin{aligned}
&\quad(1-x)^{-s}\int_{0}^{\frac{1}{2}}\frac{1}{w_1^{1+sp}}\int_{\sqrt{\frac{4x^2}{1-x^2}\frac{1}{w_1}-1}}^{\infty}\frac{\left[1-\left(1-w_1\right)^s\right]^{p-1}-\left[\left(1+w_1\right)^s-1\right]^{p-1}}{\left(1+y^2\right)^{\frac{n+sp}{2}}}y^{n-2}\mathrm{d}y\mathrm{d}w_1\\
&=(1-x)^{-s}\int_{0}^{\frac{1}{2}}\frac{\left[1-\left(1-w_1\right)^s\right]^{p-1}-\left[\left(1+w_1\right)^s-1\right]^{p-1}}{w_1^{1+sp}}\int_{\sqrt{\frac{4x^2}{1-x^2}\frac{1}{w_1}-1}}^{\infty}\frac{1}{\left(1+y^2\right)^{\frac{n+sp}{2}}}y^{n-2}\mathrm{d}y\mathrm{d}w_1\\
&\leq C(1-x)^{-s}\int_{0}^{\frac{1}{2}}\frac{\left[1-\left(1-w_1\right)^s\right]^{p-1}-\left[\left(1+w_1\right)^s-1\right]^{p-1}}{w_1^{1+sp}}\int_{\sqrt{\frac{4x^2}{1-x^2}\frac{1}{w_1}-1}}^{\infty}\frac{1}{y^{2+sp}}\mathrm{d}y\mathrm{d}w_1\\
&\leq C(1-x)^{-s}\int_{0}^{\frac{1}{2}}\frac{w_1^{p-2}\left[2-\left(1-w_1\right)^s-\left(1+w_1\right)^s\right]}{w_1^{1+sp}}\frac{1}{\left[\frac{4x^2}{1-x^2}\frac{1}{w_1}\right]^{\frac{1+sp}{2}}}\mathrm{d}w_1\\
&\leq C(1-x)^{\frac{1+s(p-2)}{2}}\int_{0}^{\frac{1}{2}}w_1^{\frac{2p-sp-1}{2}}\mathrm{d}w_1\leq C(1-x)^{\frac{1+s(p-2)}{2}}.\\
\end{aligned}
\end{equation*}

Step 9. We demonstrate that  $(1-x)^{-s}\textcircled{2}$ is uniformly  bounded when $x$ is close to $1$.
\begin{equation*}
\begin{aligned}
&\quad(1-x)^{-s}\int_{\frac{2x\epsilon}{1-x^2}}^{\frac{2x}{1+x}}\frac{1}{w_1^{1+sp}}\int_{\sqrt{\frac{4x^2}{1-x^2}\frac{1-w_1}{w_1^2}}-1}^{\infty}\frac{y^{n-2}}{\left(1+y^2\right)^{\frac{n+sp}{2}}}\mathrm{d}y\mathrm{d}w_1\\
&=(1-x)^{-s}\int_{\frac{2x\epsilon}{1-x^2}}^{\frac{2x}{1+x}}\frac{1}{w_1^{1+sp}}\int_{\sqrt{\frac{\left(\frac{2x}{1+x}-w_1\right)\left(w_1+\frac{2x}{1-x}\right)}{w_1^2}}}^{\infty}\frac{y^{n-2}}{\left(1+y^2\right)^{\frac{n+sp}{2}}}\mathrm{d}y\mathrm{d}w_1\\
&=(1-x)^{-s}\int_{\frac{2x\epsilon}{1-x^2}}^{\frac{x}{1+x}}\frac{1}{w_1^{1+sp}}\int_{\sqrt{\frac{\left(\frac{2x}{1+x}-w_1\right)\left(w_1+\frac{2x}{1-x}\right)}{w_1^2}}}^{\infty}\frac{y^{n-2}}{\left(1+y^2\right)^{\frac{n+sp}{2}}}\mathrm{d}y\mathrm{d}w_1\\
&\quad+(1-x)^{-s}\int_{\frac{x}{1+x}}^{\frac{2x}{1+x}}\frac{1}{w_1^{1+sp}}\int_{\sqrt{\frac{\left(\frac{2x}{1+x}-w_1\right)\left(w_1+\frac{2x}{1-x}\right)}{w_1^2}}}^{\infty}\frac{y^{n-2}}{\left(1+y^2\right)^{\frac{n+sp}{2}}}\mathrm{d}y\mathrm{d}w_1.\\
\end{aligned}
\end{equation*}
For one thing, 
\begin{equation*}
\begin{aligned}
&\quad(1-x)^{-s}\int_{\frac{2x\epsilon}{1-x^2}}^{\frac{x}{1+x}}\frac{1}{w_1^{1+sp}}\int_{\sqrt{\frac{\left(\frac{2x}{1+x}-w_1\right)\left(w_1+\frac{2x}{1-x}\right)}{w_1^2}}}^{\infty}\frac{y^{n-2}}{\left(1+y^2\right)^{\frac{n+sp}{2}}}\mathrm{d}y\mathrm{d}w_1\\
&\leq C (1-x)^{-s}\int_{\frac{2x\epsilon}{1-x^2}}^{\frac{x}{1+x}}\frac{1}{w_1^{1+sp}}\frac{1}{\left[\frac{\left(\frac{2x}{1+x}-w_1\right)\left(w_1+\frac{2x}{1-x}\right)}{w_1^2}\right]^{\frac{1+sp}{2}}}\mathrm{d}w_1\\
&\leq C(1-x)^{\frac{1+s(p-2)}{2}}.
\end{aligned}
\end{equation*}
For another thing, 
\begin{equation*}
\begin{aligned}
&\quad(1-x)^{-s}\int_{\frac{x}{1+x}}^{\frac{2x}{1+x}}\frac{1}{w_1^{1+sp}}\int_{\sqrt{\frac{\left(\frac{2x}{1+x}-w_1\right)\left(w_1+\frac{2x}{1-x}\right)}{w_1^2}}}^{\infty}\frac{y^{n-2}}{\left(1+y^2\right)^{\frac{n+sp}{2}}}\mathrm{d}y\mathrm{d}w_1\\
&\leq C (1-x)^{-s}\int_{\frac{x}{1+x}}^{\frac{2x}{1+x}}\frac{1}{w_1^{1+sp}}\int_{\sqrt{\frac{\left(\frac{2x}{1+x}-w_1\right)\left(w_1+\frac{2x}{1-x}\right)}{w_1^2}}}^{\infty}\frac{1}{y^{1+2s}\left(1+y^2\right)^{\frac{1+s(p-2)}{2}}}\mathrm{d}y\mathrm{d}w_1\\
&\leq C (1-x)^{-s}\int_{\frac{x}{1+x}}^{\frac{2x}{1+x}}\frac{1}{w_1^{1+sp}}\int_{\sqrt{\frac{\left(\frac{2x}{1+x}-w_1\right)\left(w_1+\frac{2x}{1-x}\right)}{w_1^2}}}^{\infty}\frac{1}{y^{1+2s}}\mathrm{d}y\mathrm{d}w_1\\
&\leq C (1-x)^{-s}\int_{\frac{x}{1+x}}^{\frac{2x}{1+x}}\frac{1}{w_1^{1+sp}}\frac{1}{\left[\frac{\left(\frac{2x}{1+x}-w_1\right)\left(w_1+\frac{2x}{1-x}\right)}{w_1^2}\right]^s}\\
&\leq C\int_{\frac{x}{1+x}}^{\frac{2x}{1+x}}\left(\frac{2x}{1+x}-w_1\right)^{-s}\leq C.
\end{aligned}
\end{equation*}

Step 10. We will prove that   $(1-x)^{-s}\textcircled{3}'$ is uniformly  bounded when $x$ is close to $1$.
\begin{equation*}
\begin{aligned}
&\quad(1-x)^{-s}\int_{\frac{2x\epsilon}{1-x^2}}^{\frac{4x^2}{1-x^2}}\frac{1}{w_1^{1+sp}}\int_{\sqrt{\frac{4x^2}{1-x^2}\frac{1+w_1}{w_1^2}-1}}^{\infty}\frac{y^{n-2}}{\left(1+y^2\right)^{\frac{n+sp}{2}}}\mathrm{d}y\mathrm{d}w_1\\
&=(1-x)^{-s}\int_{\frac{2x\epsilon}{1-x^2}}^{\frac{4x^2}{1-x^2}}\frac{1}{w_1^{1+sp}}\int_{\sqrt{\frac{\left(\frac{2x}{1-x}-w_1 \right) \left( w_1+\frac{2x}{1+x}\right) }{w_1^2}}}^{\infty}\frac{y^{n-2}}{\left(1+y^2\right)^{\frac{n+sp}{2}}}\mathrm{d}y\mathrm{d}w_1\\
&=(1-x)^{-s}\int_{\frac{2x\epsilon}{1-x^2}}^{\frac{2x^2}{1-x^2}}\frac{1}{w_1^{1+sp}}\int_{\sqrt{\frac{\left(\frac{2x}{1-x}-w_1 \right) \left( w_1+\frac{2x}{1+x}\right) }{w_1^2}}}^{\infty}\frac{y^{n-2}}{\left(1+y^2\right)^{\frac{n+sp}{2}}}\mathrm{d}y\mathrm{d}w_1\\
&\quad+(1-x)^{-s}\int_{\frac{2x^2}{1-x^2}}^{\frac{4x^2}{1-x^2}}\frac{1}{w_1^{1+sp}}\int_{\sqrt{\frac{\left(\frac{2x}{1-x}-w_1 \right) \left( w_1+\frac{2x}{1+x}\right) }{w_1^2}}}^{\infty}\frac{y^{n-2}}{\left(1+y^2\right)^{\frac{n+sp}{2}}}\mathrm{d}y\mathrm{d}w_1.\\
\end{aligned}
\end{equation*}
On the one hand, 
\begin{equation*}
\begin{aligned}
&\quad(1-x)^{-s}\int_{\frac{2x\epsilon}{1-x^2}}^{\frac{2x^2}{1-x^2}}\frac{1}{w_1^{1+sp}}\int_{\sqrt{\frac{\left(\frac{2x}{1-x}-w_1 \right) \left( w_1+\frac{2x}{1+x}\right) }{w_1^2}}}^{\infty}\frac{y^{n-2}}{\left(1+y^2\right)^{\frac{n+sp}{2}}}\mathrm{d}y\mathrm{d}w_1\\
&\leq C(1-x)^{-s}\int_{\frac{2x\epsilon}{1-x^2}}^{\frac{2x^2}{1-x^2}}\frac{1}{w_1^{1+sp}}\frac{1}{\left[\frac{\left(\frac{2x}{1-x}-w_1\right)\left(w_1+\frac{2x}{1+x}\right)}{w_1^2}\right]^{\frac{1+sp}{2}}}\mathrm{d}w_1\\
&\leq C(1-x)^{\frac{1+s(p-2)}{2}} \int_{\frac{2x\epsilon}{1-x^2}}^{\frac{2x^2}{1-x^2}}\left(w_1+\frac{2x}{1+x}\right)^{-\frac{1+sp}{2}}\mathrm{d}w_1\\
&\leq\begin{cases}
C(1-x)^{\frac{1+s(p-2)}{2}},& \text{if} \quad sp>1;\\
C(1-x)^{\frac{1+s(p-2)}{2}}\log\frac{1}{1-x},& \text{if} \quad sp=1;\\
C(1-x)^{\frac{1+s(p-2)}{2}}\frac{1}{(1-x)^{\frac{1-sp}{2}}}=C(1-x)^{s(p-1)},& \text{if} \quad sp<1.\\
\end{cases}
\end{aligned}
\end{equation*}
On the other hand,
\begin{equation*}
\begin{aligned}
&\quad(1-x)^{-s}\int_{\frac{2x^2}{1-x^2}}^{\frac{4x^2}{1-x^2}}\frac{1}{w_1^{1+sp}}\int_{\sqrt{\frac{\left(\frac{2x}{1-x}-w_1 \right) \left( w_1+\frac{2x}{1+x}\right) }{w_1^2}}}^{\infty}\frac{y^{n-2}}{\left(1+y^2\right)^{\frac{n+sp}{2}}}\mathrm{d}y\mathrm{d}w_1\\
&\leq C(1-x)^{-s}\int_{\frac{2x^2}{1-x^2}}^{\frac{4x^2}{1-x^2}}\frac{1}{w_1^{1+sp}}\mathrm{d}w_1\leq C(1-x)^{s(p-1)}.
\end{aligned}
\end{equation*}

Step 11. This part is intended to prove $(1-x)^{-s}\textcircled{2}'$ is uniformly bounded when $x$ is close to $1$.
\begin{equation*}
\begin{aligned}
&\quad(1-x)^{-s}\int_{\frac{2x\epsilon}{1-x^2}}^{\frac{4x^2}{1-x^2}}\frac{1}{w_1^{1+sp}}\int_{\sqrt{\frac{4x^2}{1-x^2}\frac{1}{w_1}-1}}^{\sqrt{\frac{4x^2}{1-x^2}\frac{1+w_1}{w_1^2}-1}}\frac{\left[1-\left(1+w_1-\frac{1-x^2}{4x^2}(1+y^2)w_1^2\right)^s\right]^{p-1}}{\left(1+y^2\right)^{\frac{n+sp}{2}}}y^{n-2}\mathrm{d}y\mathrm{d}w_1\\
&=(1-x)^{-s}\int_{\frac{2x\epsilon}{1-x^2}}^{\frac{4x^2}{1-x^2}}\frac{1}{w_1^{1+sp}}\int_{\sqrt{\frac{4x^2}{1-x^2}\frac{1}{w_1}-1}}^{\sqrt{\frac{4x^2}{1-x^2}\frac{\frac{1}{2}+w_1}{w_1^2}-1}}\frac{\left[1-\left(1+w_1-\frac{1-x^2}{4x^2}(1+y^2)w_1^2\right)^s\right]^{p-1}}{\left(1+y^2\right)^{\frac{n+sp}{2}}}y^{n-2}\mathrm{d}y\mathrm{d}w_1\\
&\quad+(1-x)^{-s}\int_{\frac{2x\epsilon}{1-x^2}}^{\frac{4x^2}{1-x^2}}\frac{1}{w_1^{1+sp}}\int_{\sqrt{\frac{4x^2}{1-x^2}\frac{\frac{1}{2}+w_1}{w_1^2}-1}}^{\sqrt{\frac{4x^2}{1-x^2}\frac{1+w_1}{w_1^2}-1}}\frac{\left[1-\left(1+w_1-\frac{1-x^2}{4x^2}(1+y^2)w_1^2\right)^s\right]^{p-1}}{\left(1+y^2\right)^{\frac{n+sp}{2}}}y^{n-2}\mathrm{d}y\mathrm{d}w_1.\\
\end{aligned}
\end{equation*}
We will estimate the two terms separately.
\begin{equation*}
\begin{aligned}
&\quad(1-x)^{-s}\int_{\frac{2x\epsilon}{1-x^2}}^{\frac{4x^2}{1-x^2}}\frac{1}{w_1^{1+sp}}\int_{\sqrt{\frac{4x^2}{1-x^2}\frac{1}{w_1}-1}}^{\sqrt{\frac{4x^2}{1-x^2}\frac{\frac{1}{2}+w_1}{w_1^2}-1}}\frac{\left[1-\left(1+w_1-\frac{1-x^2}{4x^2}(1+y^2)w_1^2\right)^s\right]^{p-1}}{\left(1+y^2\right)^{\frac{n+sp}{2}}}y^{n-2}\mathrm{d}y\mathrm{d}w_1\\
&=(1-x)^{-s}\int_{\frac{2x\epsilon}{1-x^2}}^{1}\frac{1}{w_1^{1+sp}}\int_{\sqrt{\frac{4x^2}{1-x^2}\frac{1}{w_1}-1}}^{\sqrt{\frac{4x^2}{1-x^2}\frac{\frac{1}{2}+w_1}{w_1^2}-1}}\frac{\left[1-\left(1+w_1-\frac{1-x^2}{4x^2}(1+y^2)w_1^2\right)^s\right]^{p-1}}{\left(1+y^2\right)^{\frac{n+sp}{2}}}y^{n-2}\mathrm{d}y\mathrm{d}w_1\\
&\quad+(1-x)^{-s}\int_{1}^{\frac{2x^2}{1-x^2}}\frac{1}{w_1^{1+sp}}\int_{\sqrt{\frac{4x^2}{1-x^2}\frac{1}{w_1}-1}}^{\sqrt{\frac{4x^2}{1-x^2}\frac{\frac{1}{2}+w_1}{w_1^2}-1}}\frac{\left[1-\left(1+w_1-\frac{1-x^2}{4x^2}(1+y^2)w_1^2\right)^s\right]^{p-1}}{\left(1+y^2\right)^{\frac{n+sp}{2}}}y^{n-2}\mathrm{d}y\mathrm{d}w_1\\
&\quad+(1-x)^{-s}\int_{\frac{2x^2}{1-x^2}}^{\frac{4x^2}{1-x^2}}\frac{1}{w_1^{1+sp}}\int_{\sqrt{\frac{4x^2}{1-x^2}\frac{1}{w_1}-1}}^{\sqrt{\frac{4x^2}{1-x^2}\frac{\frac{1}{2}+w_1}{w_1^2}-1}}\frac{\left[1-\left(1+w_1-\frac{1-x^2}{4x^2}(1+y^2)w_1^2\right)^s\right]^{p-1}}{\left(1+y^2\right)^{\frac{n+sp}{2}}}y^{n-2}\mathrm{d}y\mathrm{d}w_1\\
&\leq C(1-x)^{-s}\int_{\frac{2x\epsilon}{1-x^2}}^{1}\frac{1}{w_1^{1+sp}}\int_{\sqrt{\frac{4x^2}{1-x^2}\frac{1}{w_1}-1}}^{\sqrt{\frac{4x^2}{1-x^2}\frac{\frac{1}{2}+w_1}{w_1^2}-1}}\frac{\left[\frac{1-x^2}{4x^2}(1+y^2)w_1^2\right]^{p-1}}{\left(1+y^2\right)^{\frac{n+sp}{2}}}y^{n-2}\mathrm{d}y\mathrm{d}w_1\\
&\quad+C(1-x)^{-s}\int_{1}^{\frac{2x^2}{1-x^2}}\frac{1}{w_1^{1+sp}}\frac{1}{\left[\frac{4x^2}{1-x^2}\frac{1}{w_1}\right]^{\frac{1+sp}{2}}}\mathrm{d}w_1+C(1-x)^{s(p-1)}.
\end{aligned}
\end{equation*}
Moreover,
\begin{equation*}
\begin{aligned}
&\quad(1-x)^{-s}\int_{\frac{2x\epsilon}{1-x^2}}^{1}\frac{1}{w_1^{1+sp}}\int_{\sqrt{\frac{4x^2}{1-x^2}\frac{1}{w_1}-1}}^{\sqrt{\frac{4x^2}{1-x^2}\frac{\frac{1}{2}+w_1}{w_1^2}-1}}\frac{\left[\frac{1-x^2}{4x^2}(1+y^2)w_1^2\right]^{p-1}}{\left(1+y^2\right)^{\frac{n+sp}{2}}}y^{n-2}\mathrm{d}y\mathrm{d}w_1\\
&\leq C(1-x)^{p-s-1}\int_{\frac{2x\epsilon}{1-x^2}}^{1}w_1^{2p-sp-3}\int_{\sqrt{\frac{4x^2}{1-x^2}\frac{1}{w_1}-1}}^{\sqrt{\frac{4x^2}{1-x^2}\frac{\frac{1}{2}+w_1}{w_1^2}-1}}(1+y^2)^{p-\frac{sp}{2}-2}\mathrm{d}y\mathrm{d}w_1\\
&\leq \begin{cases}
C(1-x)^{p-s-1}\int_{\frac{2x\epsilon}{1-x^2}}^{1}w_1^{2p-sp-3}\left[\frac{4x^2}{1-x^2}\frac{\frac{1}{2}+w_1}{w_1^2}\right]^{p-\frac{sp}{2}-3/2}\mathrm{d}w_1\\\quad
\leq C(1-x)^{\frac{1+s(p-2)}{2}}\int_{\frac{2x\epsilon}{1-x^2}}^{1}\mathrm{d}w_1\leq C(1-x)^{\frac{1+s(p-2)}{2}},&\text{if}\quad p-\frac{sp}{2}-2\geq 0;\\
\\
C(1-x)^{p-s-1}\int_{\frac{2x\epsilon}{1-x^2}}^{1}w_1^{2p-sp-3}\left[\frac{4x^2}{1-x^2}\frac{1}{w_1}\right]^{p-\frac{sp}{2}-2}\sqrt{\frac{4x^2}{1-x^2}\frac{\frac{1}{2}+w_1}{w_1^2}}\mathrm{d}w_1\\\quad
\leq C(1-x)^{\frac{1+s(p-2)}{2}}\int_{\frac{2x\epsilon}{1-x^2}}^{1}w_1^{p-\frac{sp}{2}-2}\mathrm{d}w_1\leq C(1-x)^{\frac{1+s(p-2)}{2}},&\text{if}\quad p-\frac{sp}{2}-2< 0.
\end{cases}
\end{aligned}
\end{equation*}
And
\begin{equation*}
\begin{aligned}
&\quad(1-x)^{-s}\int_{1}^{\frac{2x^2}{1-x^2}}\frac{1}{w_1^{1+sp}}\frac{1}{\left[\frac{4x^2}{1-x^2}\frac{1}{w_1}\right]^{\frac{1+sp}{2}}}\mathrm{d}w_1\\
&\leq C(1-x)^{\frac{1+s(p-2)}{2}}\int_{1}^{\frac{2x^2}{1-x^2}}\frac{1}{w_1^{\frac{1+sp}{2}}}\mathrm{d}w_1\\
&\leq\begin{cases}
C(1-x)^{\frac{1+s(p-2)}{2}},&\text{if}\quad sp>1;\\
C(1-x)^{\frac{1+s(p-2)}{2}}\log\frac{1}{1-x},&\text{if}\quad sp=1;\\
C(1-x)^{s(p-1)},&\text{if}\quad sp<1.
\end{cases}
\end{aligned}
\end{equation*}
In addition,
\begin{equation*}
\begin{aligned}
&\quad(1-x)^{-s}\int_{\frac{2x\epsilon}{1-x^2}}^{\frac{4x^2}{1-x^2}}\frac{1}{w_1^{1+sp}}\int_{\sqrt{\frac{4x^2}{1-x^2}\frac{\frac{1}{2}+w_1}{w_1^2}-1}}^{\sqrt{\frac{4x^2}{1-x^2}\frac{1+w_1}{w_1^2}-1}}\frac{\left[1-\left(1+w_1-\frac{1-x^2}{4x^2}(1+y^2)w_1^2\right)^s\right]^{p-1}}{\left(1+y^2\right)^{\frac{n+sp}{2}}}y^{n-2}\mathrm{d}y\mathrm{d}w_1\\
&=(1-x)^{-s}\int_{\frac{2x\epsilon}{1-x^2}}^{\frac{2x^2}{1-x^2}}\frac{1}{w_1^{1+sp}}\int_{\sqrt{\frac{4x^2}{1-x^2}\frac{\frac{1}{2}+w_1}{w_1^2}-1}}^{\sqrt{\frac{4x^2}{1-x^2}\frac{1+w_1}{w_1^2}-1}}\frac{\left[1-\left(1+w_1-\frac{1-x^2}{4x^2}(1+y^2)w_1^2\right)^s\right]^{p-1}}{\left(1+y^2\right)^{\frac{n+sp}{2}}}y^{n-2}\mathrm{d}y\mathrm{d}w_1\\
&\quad+(1-x)^{-s}\int_{\frac{2x^2}{1-x^2}}^{\frac{4x^2}{1-x^2}}\frac{1}{w_1^{1+sp}}\int_{\sqrt{\frac{4x^2}{1-x^2}\frac{\frac{1}{2}+w_1}{w_1^2}-1}}^{\sqrt{\frac{4x^2}{1-x^2}\frac{1+w_1}{w_1^2}-1}}\frac{\left[1-\left(1+w_1-\frac{1-x^2}{4x^2}(1+y^2)w_1^2\right)^s\right]^{p-1}}{\left(1+y^2\right)^{\frac{n+sp}{2}}}y^{n-2}\mathrm{d}y\mathrm{d}w_1\\
&\leq C(1-x)^{-s}\int_{\frac{2x\epsilon}{1-x^2}}^{\frac{2x^2}{1-x^2}}\frac{1}{w_1^{1+sp}}\frac{1}{\left[\frac{4x^2}{1-x^2}\frac{\frac{1}{2}+w_1}{w_1^2}\right]^{\frac{1+sp}{2}}}\mathrm{d}w_1+C(1-x)^{s(p-1)}.
\end{aligned}
\end{equation*}
Further,
\begin{equation*}
\begin{aligned}
&\quad (1-x)^{-s}\int_{\frac{2x\epsilon}{1-x^2}}^{\frac{2x^2}{1-x^2}}\frac{1}{w_1^{1+sp}}\frac{1}{\left[\frac{4x^2}{1-x^2}\frac{\frac{1}{2}+w_1}{w_1^2}\right]^{\frac{1+sp}{2}}}\mathrm{d}w_1\\
&\leq C(1-x)^{\frac{1+s(p-2)}{2}}\int_{\frac{2x\epsilon}{1-x^2}}^{\frac{2x^2}{1-x^2}}\left(\frac{1}{2}+w_1\right)^{-\frac{1+sp}{2}}\mathrm{d}w_1\\
&\leq\begin{cases}
C(1-x)^{\frac{1+s(p-2)}{2}},&\text{if}\quad sp>1;\\
C(1-x)^{\frac{1+s(p-2)}{2}}\log\frac{1}{1-x},&\text{if}\quad sp=1;\\
C(1-x)^{s(p-1)},&\text{if}\quad sp<1.
\end{cases}
\end{aligned}
\end{equation*}

Step 12. We will prove  $(1-x)^{-s}\left( \uppercase\expandafter{\romannumeral2}\right) $ is uniformly bounded when $x$ is close to $1$.
\begin{equation*}
\begin{aligned}
&\quad(1-x)^{-s}\int_{\frac{2x\epsilon}{1-x^2}}^{\frac{1}{2}}\frac{1}{w_1^{1+sp}}\int_{\sqrt{\frac{4x^2}{1-x^2}\frac{1}{w_1}-1} }^{\sqrt{\frac{4x^2}{1-x^2}\frac{1-w_1}{w_1^2}-1}}\frac{\left[1-\left(1-w_1-\frac{1-x^2}{4x^2}(1+y^2)w_1^2\right)^s\right]^{p-1}}{\left(1+y^2\right)^{\frac{n+sp}{2}}}y^{n-2}\mathrm{d}y\mathrm{d}w_1\\
&=(1-x)^{-s}\int_{\frac{2x\epsilon}{1-x^2}}^{\frac{1}{4}}\frac{1}{w_1^{1+sp}}\int_{\sqrt{\frac{4x^2}{1-x^2}\frac{1}{w_1}-1} }^{\sqrt{\frac{4x^2}{1-x^2}\frac{1-w_1}{w_1^2}-1}}\frac{\left[1-\left(1-w_1-\frac{1-x^2}{4x^2}(1+y^2)w_1^2\right)^s\right]^{p-1}}{\left(1+y^2\right)^{\frac{n+sp}{2}}}y^{n-2}\mathrm{d}y\mathrm{d}w_1\\
&\quad+(1-x)^{-s}\int_{\frac{1}{4}}^{\frac{1}{2}}\frac{1}{w_1^{1+sp}}\int_{\sqrt{\frac{4x^2}{1-x^2}\frac{1}{w_1}-1} }^{\sqrt{\frac{4x^2}{1-x^2}\frac{1-w_1}{w_1^2}-1}}\frac{\left[1-\left(1-w_1-\frac{1-x^2}{4x^2}(1+y^2)w_1^2\right)^s\right]^{p-1}}{\left(1+y^2\right)^{\frac{n+sp}{2}}}y^{n-2}\mathrm{d}y\mathrm{d}w_1\\
&=(1-x)^{-s}\int_{\frac{2x\epsilon}{1-x^2}}^{\frac{1}{4}}\frac{1}{w_1^{1+sp}}\int_{\sqrt{\frac{4x^2}{1-x^2}\frac{1}{w_1}-1} }^{\sqrt{\frac{4x^2}{1-x^2}\frac{\frac{1}{2}-w_1}{w_1^2}-1}}\frac{\left[1-\left(1-w_1-\frac{1-x^2}{4x^2}(1+y^2)w_1^2\right)^s\right]^{p-1}}{\left(1+y^2\right)^{\frac{n+sp}{2}}}y^{n-2}\mathrm{d}y\mathrm{d}w_1\\
&\quad+(1-x)^{-s}\int_{\frac{2x\epsilon}{1-x^2}}^{\frac{1}{4}}\frac{1}{w_1^{1+sp}}\int_{\sqrt{\frac{4x^2}{1-x^2}\frac{\frac{1}{2}-w_1}{w_1^2}-1} }^{\sqrt{\frac{4x^2}{1-x^2}\frac{1-w_1}{w_1^2}-1}}\frac{\left[1-\left(1-w_1-\frac{1-x^2}{4x^2}(1+y^2)w_1^2\right)^s\right]^{p-1}}{\left(1+y^2\right)^{\frac{n+sp}{2}}}y^{n-2}\mathrm{d}y\mathrm{d}w_1\\
&\quad+(1-x)^{-s}\int_{\frac{1}{4}}^{\frac{1}{2}}\frac{1}{w_1^{1+sp}}\int_{\sqrt{\frac{4x^2}{1-x^2}\frac{1}{w_1}-1} }^{\sqrt{\frac{4x^2}{1-x^2}\frac{1-w_1}{w_1^2}-1}}\frac{\left[1-\left(1-w_1-\frac{1-x^2}{4x^2}(1+y^2)w_1^2\right)^s\right]^{p-1}}{\left(1+y^2\right)^{\frac{n+sp}{2}}}y^{n-2}\mathrm{d}y\mathrm{d}w_1\\
&\leq C(1-x)^{-s}\int_{\frac{2x\epsilon}{1-x^2}}^{\frac{1}{4}}\frac{1}{w_1^{1+sp}}\int_{\sqrt{\frac{4x^2}{1-x^2}\frac{1}{w_1}-1} }^{\sqrt{\frac{4x^2}{1-x^2}\frac{\frac{1}{2}-w_1}{w_1^2}-1}}\frac{\left[\frac{1-x^2}{4x^2}(1+y^2)w_1^2\right]^{p-1}}{\left(1+y^2\right)^{\frac{n+sp}{2}}}y^{n-2}\mathrm{d}y\mathrm{d}w_1\\
&\quad+C(1-x)^{-s}\int_{\frac{2x\epsilon}{1-x^2}}^{\frac{1}{4}}\frac{1}{w_1^{1+sp}}\frac{1}{\left[\frac{4x^2}{1-x^2}\frac{\frac{1}{2}-w_1}{w_1^2}\right]^{\frac{1+sp}{2}}}\mathrm{d}w_1+C(1-x)^{-s}\int_{\frac{1}{4}}^{\frac{1}{2}}\frac{1}{w_1^{1+sp}}\frac{1}{\left[\frac{4x^2}{1-x^2}\frac{1}{w_1}\right]^{\frac{1+sp}{2}}}\\
&\leq C(1-x)^{p-s-1}\int_{\frac{2x\epsilon}{1-x^2}}^{\frac{1}{4}}w_1^{2p-sp-3}\int_{\sqrt{\frac{4x^2}{1-x^2}\frac{1}{w_1}-1} }^{\sqrt{\frac{4x^2}{1-x^2}\frac{\frac{1}{2}-w_1}{w_1^2}-1}}(1+y^2)^{p-\frac{sp}{2}-2}\mathrm{d}y\mathrm{d}w_1+C(1-x)^{\frac{1+s(p-2)}{2}}.
\end{aligned}
\end{equation*}
Furthermore,
\begin{equation*}
\begin{aligned}
&\quad(1-x)^{p-s-1}\int_{\frac{2x\epsilon}{1-x^2}}^{\frac{1}{4}}w_1^{2p-sp-3}\int_{\sqrt{\frac{4x^2}{1-x^2}\frac{1}{w_1}-1} }^{\sqrt{\frac{4x^2}{1-x^2}\frac{\frac{1}{2}-w_1}{w_1^2}-1}}(1+y^2)^{p-\frac{sp}{2}-2}\mathrm{d}y\mathrm{d}w_1\\
&\leq \begin{cases}
C(1-x)^{p-s-1}\int_{\frac{2x\epsilon}{1-x^2}}^{\frac{1}{4}}w_1^{2p-sp-3}\left( \frac{4x^2}{1-x^2}\frac{\frac{1}{2}-w_1}{w_1^2}\right)^{p-\frac{sp}{2}-3/2}\mathrm{d}w_1\leq C(1-x)^{\frac{1+s(p-2)}{2}},&\text{if}\quad p-\frac{sp}{2}-2\geq 0;\\
\\
C(1-x)^{p-s-1}\int_{\frac{2x\epsilon}{1-x^2}}^{\frac{1}{4}}w_1^{2p-sp-3}\left( \frac{4x^2}{1-x^2}\frac{1}{w_1}\right)^{p-\frac{sp}{2}-2}\sqrt{\frac{4x^2}{1-x^2}\frac{\frac{1}{2}-w_1}{w_1^2}}\mathrm{d}w_1\leq C(1-x)^{\frac{1+s(p-2)}{2}},&\text{if}\quad p-\frac{sp}{2}-2<0.\\
\end{cases}
\end{aligned}
\end{equation*}

Step 13. We finally will prove that $\left( \uppercase\expandafter{\romannumeral4}\right) $ can be reduced to 
\begin{equation}
\int_{\frac{1}{2}}^{\infty}\frac{\left[\left(1+w_1\right)^s-1\right]^{p-1}}{w_1^{1+sp}}\int_{0 }^{\infty}\frac{y^{n-2}}{\left(1+y^2\right)^{\frac{n+sp}{2}}}\mathrm{d}y\mathrm{d}w_1.
\end{equation}
Firstly, we prove that  $\left( \uppercase\expandafter{\romannumeral4}\right) $ can be replaced by 
\begin{equation*}
\int_{\frac{1}{2}}^{\frac{4x^2}{1-x^2}}\frac{1}{w_1^{1+sp}}\int_{0 }^{\sqrt{\frac{4x^2}{1-x^2}\frac{1}{w_1}-1}}\frac{\left[\left(1+w_1\right)^s-1\right]^{p-1}}{\left(1+y^2\right)^{\frac{n+sp}{2}}}y^{n-2}\mathrm{d}y\mathrm{d}w_1.
\end{equation*}
Considering the difference,
\begin{equation*}
\begin{aligned}
&\quad(1-x)^{-s}\int_{\frac{1}{2}}^{\frac{4x^2}{1-x^2}}\frac{1}{w_1^{1+sp}}\int_{0 }^{\sqrt{\frac{4x^2}{1-x^2}\frac{1}{w_1}-1}}\frac{\left[\left(1+w_1\right)^s-1\right]^{p-1}-\left[\left(1+w_1-\frac{1-x^2}{4x^2}(1+y^2)w_1^2\right)^s-1\right]^{p-1}}{\left(1+y^2\right)^{\frac{n+sp}{2}}}y^{n-2}\mathrm{d}y\mathrm{d}w_1\\
&=(1-x)^{-s}\int_{\frac{1}{2}}^{\frac{3}{4}}\frac{1}{w_1^{1+sp}}\int_{0 }^{\sqrt{\frac{4x^2}{1-x^2}\frac{1}{w_1}-1}}\frac{\left[\left(1+w_1\right)^s-1\right]^{p-1}-\left[\left(1+w_1-\frac{1-x^2}{4x^2}(1+y^2)w_1^2\right)^s-1\right]^{p-1}}{\left(1+y^2\right)^{\frac{n+sp}{2}}}y^{n-2}\mathrm{d}y\mathrm{d}w_1\\
&\quad+(1-x)^{-s}\int_{\frac{3}{4}}^{\frac{4x^2}{1-x^2}}\frac{1}{w_1^{1+sp}}\int_{0 }^{\sqrt{\frac{4x^2}{1-x^2}\frac{1}{w_1}-1}}\frac{\left[\left(1+w_1\right)^s-1\right]^{p-1}-\left[\left(1+w_1-\frac{1-x^2}{4x^2}(1+y^2)w_1^2\right)^s-1\right]^{p-1}}{\left(1+y^2\right)^{\frac{n+sp}{2}}}y^{n-2}\mathrm{d}y\mathrm{d}w_1\\
&\leq C (1-x)^{-s}\int_{\frac{1}{2}}^{\frac{3}{4}}\frac{1}{w_1^{1+sp}}\int_{0 }^{\sqrt{\frac{4x^2}{1-x^2}\frac{1}{w_1}-1}}\frac{w_1^{p-2}\left[\left(1+w_1\right)^s-\left(1+w_1-\frac{1-x^2}{4x^2}(1+y^2)w_1^2\right)^s\right]}{\left(1+y^2\right)^{\frac{n+sp}{2}}}y^{n-2}\mathrm{d}y\mathrm{d}w_1\\
&\quad+(1-x)^{-s}\int_{\frac{3}{4}}^{\frac{2x^2}{1-x^2}}\frac{1}{w_1^{1+sp}}\int_{0 }^{\sqrt{\frac{4x^2}{1-x^2}\frac{1}{w_1}-1}}\frac{\left[\left(1+w_1\right)^s-1\right]^{p-1}-\left[\left(1+w_1-\frac{1-x^2}{4x^2}(1+y^2)w_1^2\right)^s-1\right]^{p-1}}{\left(1+y^2\right)^{\frac{n+sp}{2}}}y^{n-2}\mathrm{d}y\mathrm{d}w_1\\
&\quad+C(1-x)^{s(p-1)}.
\end{aligned}
\end{equation*}
We begin to evaluate the first two terms separately,
\begin{equation*}
\begin{aligned}
&\quad(1-x)^{-s}\int_{\frac{1}{2}}^{\frac{3}{4}}\frac{1}{w_1^{1+sp}}\int_{0 }^{\sqrt{\frac{4x^2}{1-x^2}\frac{1}{w_1}-1}}\frac{w_1^{p-2}\left[\left(1+w_1\right)^s-\left(1+w_1-\frac{1-x^2}{4x^2}(1+y^2)w_1^2\right)^s\right]}{\left(1+y^2\right)^{\frac{n+sp}{2}}}y^{n-2}\mathrm{d}y\mathrm{d}w_1\\
&\leq C (1-x)^{-s}\int_{\frac{1}{2}}^{\frac{3}{4}}\frac{1}{w_1^{1+sp}}\int_{0 }^{\sqrt{\frac{4x^2}{1-x^2}\frac{1}{w_1}-1}}\frac{w_1^{p-2}\left(1+w_1\right)^s\left[1-\left(1-\frac{1-x^2}{4x^2}(1+y^2)\frac{w_1^2}{1+w_1}\right)^s\right]}{\left(1+y^2\right)^{\frac{n+sp}{2}}}y^{n-2}\mathrm{d}y\mathrm{d}w_1\\
&\leq C (1-x)^{-s}\int_{\frac{1}{2}}^{\frac{3}{4}}\frac{1}{w_1^{1+sp}}\int_{0 }^{\sqrt{\frac{4x^2}{1-x^2}\frac{1}{w_1}-1}}\frac{w_1^{p-2}\left(1+w_1\right)^s\frac{1-x^2}{4x^2}(1+y^2)\frac{w_1^2}{1+w_1}}{\left(1+y^2\right)^{\frac{n+sp}{2}}}y^{n-2}\mathrm{d}y\mathrm{d}w_1\\
&= C(1-x)^{1-s}\int_{\frac{1}{2}}^{\frac{3}{4}}\int_{0 }^{\sqrt{\frac{4x^2}{1-x^2}\frac{1}{w_1}-1}}\frac{1}{(1+y^2)^{\frac{sp}{2}}}\mathrm{d}y\mathrm{d}w_1\\
&\leq \begin{cases}
C(1-x)^{1-s},&\text{if}\quad sp\geq 2;\\
C(1-x)^{1-s}\int_{\frac{1}{2}}^{\frac{3}{4}}\left(\frac{4x^2}{1-x^2}\frac{1}{w_1}\right)^{1-\frac{sp}{2}}\mathrm{d}w_1\leq C(1-x)^{\frac{s(p-2)}{2}},&\text{if}\quad sp< 2.\\
\end{cases}
\end{aligned}
\end{equation*}
Moreover, 
\begin{equation*}
\begin{aligned}
&\quad(1-x)^{-s}\int_{\frac{3}{4}}^{\frac{2x^2}{1-x^2}}\frac{1}{w_1^{1+sp}}\int_{0 }^{\sqrt{\frac{4x^2}{1-x^2}\frac{1}{w_1}-1}}\frac{\left[\left(1+w_1\right)^s-1\right]^{p-1}-\left[\left(1+w_1-\frac{1-x^2}{4x^2}(1+y^2)w_1^2\right)^s-1\right]^{p-1}}{\left(1+y^2\right)^{\frac{n+sp}{2}}}y^{n-2}\mathrm{d}y\mathrm{d}w_1\\
&=(1-x)^{-s}\int_{\frac{3}{4}}^{\frac{2x^2}{1-x^2}}\frac{1}{w_1^{1+sp}}\int_{0 }^{\sqrt{\frac{2x^2}{1-x^2}\frac{1}{w_1}-1}}\frac{\left[\left(1+w_1\right)^s-1\right]^{p-1}-\left[\left(1+w_1-\frac{1-x^2}{4x^2}(1+y^2)w_1^2\right)^s-1\right]^{p-1}}{\left(1+y^2\right)^{\frac{n+sp}{2}}}y^{n-2}\mathrm{d}y\mathrm{d}w_1\\
&\quad+(1-x)^{-s}\int_{\frac{3}{4}}^{\frac{2x^2}{1-x^2}}\frac{1}{w_1^{1+sp}}\int_{\sqrt{\frac{2x^2}{1-x^2}\frac{1}{w_1}-1} }^{\sqrt{\frac{4x^2}{1-x^2}\frac{1}{w_1}-1}}\frac{\left[\left(1+w_1\right)^s-1\right]^{p-1}-\left[\left(1+w_1-\frac{1-x^2}{4x^2}(1+y^2)w_1^2\right)^s-1\right]^{p-1}}{\left(1+y^2\right)^{\frac{n+sp}{2}}}y^{n-2}\mathrm{d}y\mathrm{d}w_1\\
&\leq C(1-x)^{-s}\int_{\frac{3}{4}}^{\frac{2x^2}{1-x^2}}\frac{1}{w_1^{1+sp}}\int_{0 }^{\sqrt{\frac{2x^2}{1-x^2}\frac{1}{w_1}-1}}\frac{w_1^{s(p-2)}\left(1+w_1\right)^s\left[1-\left(1-\frac{1-x^2}{4x^2}(1+y^2)\frac{w_1^2}{1+w_1}\right)^s\right]}{\left(1+y^2\right)^{\frac{n+sp}{2}}}y^{n-2}\mathrm{d}y\mathrm{d}w_1\\
&\quad+C(1-x)^{-s}\int_{\frac{3}{4}}^{\frac{2x^2}{1-x^2}}\frac{1}{w_1^{1+sp}}\frac{1}{\left(\frac{2x^2}{1-x^2}\frac{1}{w_1}\right)^{\frac{sp}{2}}}\mathrm{d}w_1\\
&\leq C(1\!-\!x)^{-s}\int_{\frac{3}{4}}^{\frac{2x^2}{1-\!x^2}}\frac{1}{w_1^{1+sp}}\int_{0 }^{\sqrt{\frac{2x^2}{1-x^2}\frac{1}{w_1}\!-\!1}}\frac{w_1^{s(p-2)}\left(1\!+\!w_1\right)^s\frac{1-\!x^2}{4x^2}(1+\!y^2)\frac{w_1^2}{1+w_1}}{\left(1+y^2\right)^{\frac{n+sp}{2}}}y^{n-2}\mathrm{d}y\mathrm{d}w_1
\\
&\quad+C(1-x)^{\frac{s(p-2)}{2}}\int_{\frac{3}{4}}^{\frac{2x^2}{1-x^2}}\frac{1}{w_1^{1+\frac{sp}{2}}}\mathrm{d}w_1\\
&\leq C(1-x)^{1-s}\int_{\frac{3}{4}}^{\frac{2x^2}{1-x^2}}\frac{w_1^{1-2s}}{(1+w_1)^{1-s}}\int_{0 }^{\sqrt{\frac{2x^2}{1-x^2}\frac{1}{w_1}-1}}\frac{1}{\left(1+y^2\right)^{\frac{sp}{2}}}\mathrm{d}y\mathrm{d}w_1+C(1-x)^{\frac{s(p-2)}{2}}.\\
\end{aligned}
\end{equation*}
Besides,
\begin{equation*}
\begin{aligned}
&\quad (1-x)^{1-s}\int_{\frac{3}{4}}^{\frac{2x^2}{1-x^2}}\frac{w_1^{1-2s}}{(1+w_1)^{1-s}}\int_{0 }^{\sqrt{\frac{2x^2}{1-x^2}\frac{1}{w_1}-1}}\frac{1}{\left(1+y^2\right)^{\frac{sp}{2}}}\mathrm{d}y\mathrm{d}w_1\\
&\leq C(1-x)^{1-s}\int_{\frac{3}{4}}^{\frac{2x^2}{1-x^2}}w_1^{-s}\int_{0 }^{\sqrt{\frac{2x^2}{1-x^2}\frac{1}{w_1}-1}}\frac{1}{\left(1+y^2\right)^{\frac{sp}{2}}}\mathrm{d}y\mathrm{d}w_1\\
&\leq\begin{cases}
(1-x)^{1-s}\int_{\frac{3}{4}}^{\frac{2x^2}{1-x^2}}w_1^{-s}\mathrm{d}w_1\leq C,&\text{if}\quad sp\geq 2;\\
\\
C(1-x)^{1-s}\int_{\frac{3}{4}}^{\frac{2x^2}{1-x^2}}w_1^{-s}\left(\frac{2x^2}{1-x^2}\frac{1}{w_1}\right)^{1-\frac{sp}{2}}\mathrm{d}w_1\leq C(1-x)^{\frac{s(p-2)}{2}}\int_{\frac{3}{4}}^{\frac{2x^2}{1-x^2}}w_1^{\frac{s(p-2)}{2}-1}\mathrm{d}w_1\leq C,&\text{if}\quad sp< 2.
\end{cases}
\end{aligned}
\end{equation*}
Secondly, we claim that  $\left( \uppercase\expandafter{\romannumeral4}\right) $ can be substituted by 
\begin{equation*}
\int_{\frac{1}{2}}^{\frac{4x^2}{1-x^2}}\frac{1}{w_1^{1+sp}}\int_{0 }^{\infty}\frac{\left[\left(1+w_1\right)^s-1\right]^{p-1}}{\left(1+y^2\right)^{\frac{n+sp}{2}}}y^{n-2}\mathrm{d}y\mathrm{d}w_1.
\end{equation*}
Since 
\begin{equation*}
\begin{aligned}
&\quad (1-x)^{-s}\int_{\frac{1}{2}}^{\frac{4x^2}{1-x^2}}\frac{1}{w_1^{1+sp}}\int_{\sqrt{\frac{4x^2}{1-x^2}\frac{1}{w_1}-1} }^{\infty}\frac{\left[\left(1+w_1\right)^s-1\right]^{p-1}}{\left(1+y^2\right)^{\frac{n+sp}{2}}}y^{n-2}\mathrm{d}y\mathrm{d}w_1\\
&\leq C (1-x)^{-s}\int_{\frac{1}{2}}^{\frac{4x^2}{1-x^2}}\frac{\left[\left(1+w_1\right)^s-1\right]^{p-1}}{w_1^{1+sp}}\frac{1}{\left(\frac{4x^2}{1-x^2}\frac{1}{w_1}\right)^{\frac{sp}{2}}}\mathrm{d}y\mathrm{d}w_1\\
&\leq C(1-x)^{\frac{s(p-2)}{2}}\int_{\frac{1}{2}}^{\frac{4x^2}{1-x^2}}\frac{\left[\left(1+w_1\right)^s-1\right]^{p-1}}{w_1^{1+\frac{sp}{2}}}\mathrm{d}w_1\\
&\leq C(1-x)^{\frac{s(p-2)}{2}}\int_{\frac{1}{2}}^{\frac{4x^2}{1-x^2}}\frac{w_1^{s(p-1)}}{w_1^{1+\frac{sp}{2}}}\mathrm{d}w_1\\
&\leq C(1-x)^{\frac{s(p-2)}{2}}\int_{\frac{1}{2}}^{\frac{4x^2}{1-x^2}}w_1^{\frac{s(p-2)}{2}-1}\mathrm{d}w_1\leq C.\\
\end{aligned}
\end{equation*}
Thirdly,  $\left( \uppercase\expandafter{\romannumeral4}\right) $ can be replaced by 
\begin{equation*}
\int_{\frac{1}{2}}^{\infty}\frac{1}{w_1^{1+sp}}\int_{0 }^{\infty}\frac{\left[\left(1+w_1\right)^s-1\right]^{p-1}}{\left(1+y^2\right)^{\frac{n+sp}{2}}}y^{n-2}\mathrm{d}y\mathrm{d}w_1.
\end{equation*}
Due to the fact that 
\begin{equation*}
\begin{aligned}
&\quad(1-x)^{-s}\int_{\frac{4x^2}{1-x^2}}^{\infty}\frac{1}{w_1^{1+sp}}\int_{0 }^{\infty}\frac{\left[\left(1+w_1\right)^s-1\right]^{p-1}}{\left(1+y^2\right)^{\frac{n+sp}{2}}}y^{n-2}\mathrm{d}y\mathrm{d}w_1\leq C(1-x)^{-s}\int_{\frac{4x^2}{1-x^2}}^{\infty}\frac{w_1^{s(p-1)}}{w_1^{1+sp}}\mathrm{d}w_1\leq C.
\end{aligned}
\end{equation*}

Step 14. Overall,  the singular term is 
\begin{equation*}
(1-x)^{-s}\int_{0}^{\infty}\frac{y^{n-2}}{(1+y^2)^\frac{n+sp}{2}}\Bigg\{\frac{1}{sp}+\int_{0}^{1}\frac{\left[1-(1-w_1)^s\right]^{p-1}-\left[(1+w_1)^s-1\right]^{p-1}}{w_1^{1+sp}}\mathrm{d}w_1-\int_{1}^{\infty}\frac{\left[(1+w_1)^s-1\right]^{p-1}}{w_1^{1+sp}}\mathrm{d}w_1\Bigg\}.
\end{equation*}
Hence we only need to prove the following identity,
\begin{equation*}
\frac{1}{sp}+\int_{0}^{1}\frac{\left[1-(1-w_1)^s\right]^{p-1}-\left[(1+w_1)^s-1\right]^{p-1}}{w_1^{1+sp}}\mathrm{d}w_1-\int_{1}^{\infty}\frac{\left[(1+w_1)^s-1\right]^{p-1}}{w_1^{1+sp}}\mathrm{d}w_1=0.
\end{equation*}
Which is exactly the identity (\ref{identity}). 
Hence we have completed the proof for higher dimensions.
\section{Hopf's lemma}
In order to elaborate on the difference between $C^{1,1}_{loc}(\Omega)\cap \mathcal{L}_{sp}$ and $\mathcal{W}^{s,p}(\Omega)$, we give a function in $\big\{C^{1,1}_{loc}(\Omega)\cap \mathcal{L}_{sp}\big\}\setminus\mathcal{W}^{s,p}(\Omega)$ for a bounded domain $\Omega$. 
\begin{exmp}\label{lizi}
	Set 
	\begin{equation}
	u(x)=\begin{cases}
	\frac{1}{|x|^t},& \text{if} \quad x\in B^+_1(0):=\{x\in B_1(0)\rvert x_n>0\};\\
	0, & \text{if}\quad x\in \mathbb{R}^n\setminus B^+_1(0).
	\end{cases}
	\end{equation}
	Then $u\in C^{1,1}_{loc}\left(B^+_1(0)\right)$ and $u$ is lower semicontinuous,
	\begin{equation}
	\begin{aligned}
	\int_{\mathbb{R}^n} \frac{|1+u(x)|^{p-1}}{1+|x|^{n+sp}}\mathrm{d}x&=\int_{B^+_1(0)} \frac{|1+\frac{1}{|x|^t}|^{p-1}}{1+|x|^{n+sp}}\mathrm{d}x+\int_{\mathbb{R}^n\setminus B^+_1(0)} \frac{1}{1+|x|^{n+sp}}\mathrm{d}x\\
	&\leq C(p) \int_{B^+_1(0)} \frac{\frac{1}{|x|^{t(p-1)}}}{1+|x|^{n+sp}}\mathrm{d}x+C(p)\int_{\mathbb{R}^n} \frac{1}{1+|x|^{n+sp}}\mathrm{d}x\\
	&\leq C(p)\int_{B^+_1(0)} \frac{1}{|x|^{t(p-1)}}\mathrm{d}x+C(p)\\
	&\leq C(p,n)\int_{0}^{1}\rho^{n-t(p-1)-1}\mathrm{d}\rho+C(p,n)\\
	&< +\infty \quad\text{if} \quad t<\frac{n}{p-1}.
	\end{aligned}
	\end{equation}
	So $u(x)\in C^{1,1}_{loc}\left(B^+_1(0)\right)\cap \mathcal{L}_{sp}\left(\mathbb{R}^n\right)$ when $t<\frac{n}{p-1}.$
	However, when $t\geq\frac{n}{p},$ $u(x)\notin L^p(B^+_1(0))$.
	So for $p\in(\frac{n}{p},\frac{n}{p-1})$,  $u(x)\in\big\{C^{1,1}_{loc}\left(B^+_1(0)\right)\cap \mathcal{L}_{sp}\big\}\setminus\mathcal{W}^{s,p}\left(B^+_1(0)\right).$ 
\end{exmp}
From the example above, it is meaningful to investigate the Hopf's lemma for $u(x)\in C^{1,1}_{loc}(\Omega)\cap \mathcal{L}_{sp}$  in the point-wise sense.
\begin{proof}[Proof of Theorem \ref{thm 2}]
	Since u satisfies the uniform interior ball condition, we assume the uniform radius is $10\varrho.$ Then for every $z\in\partial\Omega$, there is a ball $B_{\varrho}(y)\subset\Omega$ centered at $y\in\Omega$ with $\{z\}=\partial B_{\varrho}(y)\cap\partial\Omega.$ And $\delta(x):=dist(x,\partial\Omega)=|x-z|$ for all $x\in[y,z].$ 
	Without loss of generality, we relocate the origin to $y$ so that $\delta(x)=\varrho-|x|.$ 
	
	Set $$\psi(x)=(1-|x|^2)^s_{+}, \quad \psi_{\varrho}(x)=\psi(\frac{x}{\varrho})=\frac{(\varrho^2-|x|)^s_{+}}{\varrho^{2s}},\quad x\in B_{\varrho}.$$ Then by Theorem \ref{thm 1}, there exists a constant $C_0$>0, such that  $$(-\Delta)^s_p \psi(x)\leq C_0,\quad(-\Delta)^s_p \psi_{\varrho}(x)=\frac{(-\Delta)^s_p \psi(\frac{x}{\varrho})}{\varrho^{sp}}\leq \frac{C_0}{\varrho^{sp}}.$$
	
	Since $u>0$ in $\Omega$, we consider a region $D:=\{x\in\Omega: dist(x,\partial\Omega)\geq 3\varrho\}$ which has a positive distance with $B_{\varrho}.$  So $\displaystyle C_{D}:=\inf\limits_D u(x) >0.$ 
	Then we set $\displaystyle u_{-}(x)=u\chi_{D}+\epsilon\psi_{\varrho},$ where $\epsilon<C_{D} $ is to be specified later. Denote $[t]^{p-1}:=|t|^{p-2}t.$ For any $x\in B_{\varrho} ,$
	\begin{equation*}
	\begin{aligned}
	(-\Delta)^s_p u_{-}(x)&=C_{n,s,p}P.V.\int_{\mathbb{R}^n}\frac{\left[\epsilon\psi_{\varrho}(x)-u_{-}(y)\right]^{p-1}}{|x-y|^{n+sp}}\mathrm{d}y\\
	&=C_{n,s,p}P.V.\Big\{\int_{B_{\varrho}(y)}\frac{\left[\epsilon\psi_{\varrho}(x)-\epsilon\psi_{\varrho}(y)\right]^{p-1}}{|x-y|^{n+sp}}\mathrm{d}y+\int_{D}\frac{\left[\epsilon\psi_{\varrho}(x)-u(y)\right]^{p-1}}{|x-y|^{n+sp}}\mathrm{d}y\\
	&\quad\quad\quad\quad\quad+\int_{\mathbb{R}^n\setminus\left(D\cup B_{\varrho}(y)\right)}\frac{\left[\epsilon\psi_{\varrho}(x)\right]^{p-1}}{|x-y|^{n+sp}}\mathrm{d}y\Big\}\\
	&=\epsilon^{p-1}(-\Delta)^s_p \psi_{\varrho}(x)+C_{n,s,p}P.V.\Big\{\int_{D}\frac{\left[\epsilon\psi_{\varrho}(x)-u(y)\right]^{p-1}}{|x-y|^{n+sp}}\mathrm{d}y-\int_{D}\frac{\left[\epsilon\psi_{\varrho}(x)\right]^{p-1}}{|x-y|^{n+sp}}\mathrm{d}y\Big\}\\
	&\leq\frac{C_0\epsilon^{p-1}}{\varrho^{sp}}+C_{n,s,p}P.V.\Big\{\int_{D}\frac{\left[\epsilon\psi_{\varrho}(x)-u(y)\right]^{p-1}-\left[\epsilon\psi_{\varrho}(x)\right]^{p-1}}{|x-y|^{n+sp}}\mathrm{d}y\Big\}\\
	&\leq \frac{C_0\epsilon^{p-1}}{\varrho^{sp}}+C_{n,s,p}2^{2-p}P.V.\Big\{\int_{D}\frac{\left[-u(y)\right]^{p-1}}{|x-y|^{n+sp}}\mathrm{d}y\Big\}\\
	&\leq \frac{C_0\epsilon^{p-1}}{\varrho^{sp}}-C_{n,s,p}2^{2-p}C_D^{p-1}\int_D\frac{1}{|x-y|^{n+sp}}\mathrm{d}y\\
	&\leq \frac{C_0\epsilon^{p-1}}{\varrho^{sp}}-C_{n,s,p}2^{2-p}C_D^{p-1}C(\varrho),
	\end{aligned}
	\end{equation*} 
	where we use the inequality in Lemma \ref{dandiao}.
	So we can choose $\epsilon$ small such that $(-\Delta)^s_p u_{-}(x)+c(x)u_{-}\leq 0$ for any $x\in B_{\varrho} . $ Then by  Propositon \ref{bijiaoyuanli}, $$u(x)\geq u_{-}(x)=\epsilon\psi_{\varrho}(x)=\frac{\epsilon(\varrho^2-|x|)^s}{\varrho^{2s}}=\frac{\epsilon(\varrho+|x|)^s(\varrho-|x|)^s}{\varrho^{2s}}=\frac{\epsilon(\varrho+|x|)^s}{\varrho^{2s}}\delta(x)^s,\quad\forall x\in [y,z].$$
\end{proof}
\section{Global H\"older regularity of bounded positive solutions} 
\begin{proof}[Proof of Theorem \ref{thm3}]
	First we repeat the last part of the proof in \cite[Theorem 2]{jin2019}, there exist $\epsilon_0, \nu_0$ such that $$|u(x)|\leq c\left[dist(x,\partial\Omega)\right]^{\nu_0},\quad\forall x\in V:=\{x\in\Omega\ |\  dist\{x,\partial\Omega\}<\epsilon_0\}.$$
	By the boundness of $u$, we have \[|u(x)|\leq c\left[dist(x,\partial\Omega)\right]^{\nu_0},\quad\forall x\in\Omega.\]
	
	Now we consider $x\in W:=\{x\in\Omega\  |\  dist\{x,\partial\Omega\}\geq\epsilon_0\},$ then $\displaystyle B_{\frac{\epsilon_0}{2}}(x)\subset\subset \Omega, \ \forall x\in W.$	
	We claim  $u\in\mathcal{W}^{s,p}_{loc}(\Omega)$ if $u\in C^{1,1}_{loc}(\Omega)\cap \mathcal{L}_{sp}.$  
	In fact, for any domain $\Omega'\subset\subset\Omega,$
	\begin{equation*}
	\begin{aligned}
	\iint\limits_{\Omega'\times\Omega'}\frac{|u(x)-u(y)|^p}{|x-y|^{n+sp}}\mathrm{d}x\mathrm{d}y&=\int\limits_{\Omega'}\left(\int\limits_{\Omega'}\frac{\lvert\nabla u(y)\cdot(x-y)+o(|x-y|)\rvert^p}{|x-y|^{n+sp}}\mathrm{d}x\right)\mathrm{d}y\\
	&\leq C(\Omega') \int\limits_{\Omega'}\left(\int\limits_{\Omega'}\frac{\lvert x-y\rvert^p}{|x-y|^{n+sp}}\mathrm{d}x\right)\mathrm{d}y\leq C(\Omega').
	\end{aligned}
	\end{equation*}
	By \cite[Proposition 2.12, Lemma 2.5]{iannizzotto2014global}, $u$ is a weak solution of $(-\Delta)^s_p u=f$ in $ \displaystyle B_{\frac{\epsilon_0}{2}}(x),\ \forall x\in W.$ Then by \cite[Theorem 1.4]{brasco2018higher}, for this $\nu_0$, we have 
	\begin{equation*}
	\begin{aligned}
	\left[u\right]_{C^{\nu_0}\left(B_{\frac{\epsilon_0}{64}}(x)\right)}&\leq\frac{C({\nu_0})}{{\epsilon_0}^{\nu_0}}\left[\|u\|_{L^\infty\left(B_{\frac{\epsilon_0}{8}}(x)\right)}+\left(\epsilon_0^{sp}\int_{\mathbb{R}^n\setminus\left(B_{\frac{\epsilon_0}{8}}(x)\right)}\frac{|u|^{p-1}}{|x-y|^{n+sp}}\mathrm{d}y\right)^{\frac{1}{p-1}}+\left(\epsilon_0^{sp}\|f\|_{L^\infty\left(B_{\frac{\epsilon_0}{8}}(x)\right)}\right)^{\frac{1}{p-1}}\right]\\
	&\leq \frac{C({\nu_0})}{{\epsilon_0}^{\nu_0}}\left[\|u\|_{L^\infty\left(\Omega\right)}+\epsilon_0^{\frac{sp}{p-1}}\|f\|^{\frac{1}{p-1}}_{L^\infty(\Omega)}\right]\leq C(\nu_0,\epsilon_0)\left[\|u\|_{L^\infty\left(\Omega\right)}+\|f\|^{\frac{1}{p-1}}_{L^\infty(\Omega)}\right].
	\end{aligned}
	\end{equation*}
	So for $\forall x, y \in W,$ we have $$\frac{|u(x)-u(y)|}{|x-y|^{\nu_0}}\leq C(\nu_0,\epsilon_0)\left[\|u\|_{L^\infty\left(\Omega\right)}+\|f\|^{\frac{1}{p-1}}_{L^\infty(\Omega)}\right].$$
	
	The next part is similar to the \cite[Section 5.2]{iannizzotto2014global}.
	Now we consider that $x,y\in V.$ Again by \cite[Theorem 1.4]{brasco2018higher}, we denote $\delta_x:= dist(x,\partial\Omega),$ 
	\begin{equation}\label{holder}
	\begin{aligned}
	\left[u\right]_{C^{\nu_0}\left(B_{\frac{\delta_x}{64}}(x)\right)}&\leq\frac{C({\nu_0})}{{\delta_x}^{\nu_0}}\left[\|u\|_{L^\infty\left(B_{\frac{\delta_x}{8}}(x)\right)}+\left(\delta_x^{sp}\int_{\mathbb{R}^n\setminus\left(B_{\frac{\delta_x}{8}}(x)\right)}\frac{|u|^{p-1}}{|x-y|^{n+sp}}\mathrm{d}y\right)^{\frac{1}{p-1}}+\left(\delta_x^{sp}\|f\|_{L^\infty\left(B_{\frac{\delta_x}{8}}(x)\right)}\right)^{\frac{1}{p-1}}\right]\\
	&\leq \frac{C({\nu_0})}{{\delta_x}^{\nu_0}}\left[C\delta_x^{\nu_0}+\left(\delta_x^{sp}\int_{\Omega\setminus\left(B_{\frac{\delta_x}{8}}(x)\right)}\frac{C\delta_y^{\nu_0(p-1)}}{|x-y|^{n+sp}}\mathrm{d}y\right)^{\frac{1}{p-1}}+\left(\delta_x^{sp}\|f\|_{L^\infty\left(B_{\frac{\delta_x}{8}}(x)\right)}\right)^{\frac{1}{p-1}}\right]\\
	&\leq C({\nu_0})\left(1+\|f\|^{\frac{1}{p-1}}_{L^\infty(\Omega)}\right)+\frac{C({\nu_0})}{{\delta_x}^{\nu_0}}\left(\delta_x^{sp}\int_{\Omega\setminus\left(B_{\frac{\delta_x}{8}}(x)\right)}\frac{C\delta_y^{\nu_0(p-1)}}{|x-y|^{n+sp}}\mathrm{d}y\right)^{\frac{1}{p-1}}\\
	&\leq C({\nu_0})\left(1+\|f\|^{\frac{1}{p-1}}_{L^\infty(\Omega)}\right)+\frac{C({\nu_0})}{{\delta_x}^{\nu_0}}\left(\delta_x^{sp}\int_{\Omega\setminus\left(B_{\frac{\delta_x}{8}}(x)\right)}\frac{C(\delta_x+|x-y|)^{\nu_0(p-1)}}{|x-y|^{n+sp}}\mathrm{d}y\right)^{\frac{1}{p-1}}\\
	&\leq C({\nu_0})\left(1+\|f\|^{\frac{1}{p-1}}_{L^\infty(\Omega)}\right)+\frac{C({\nu_0})}{{\delta_x}^{\nu_0}}\left(\delta_x^{sp}\int_{\Omega\setminus\left(B_{\frac{\delta_x}{8}}(x)\right)}\frac{C|x-y|^{\nu_0(p-1)}}{|x-y|^{n+sp}}\mathrm{d}y\right)^{\frac{1}{p-1}}\\
	&\leq C({\nu_0})\left(1+\|f\|^{\frac{1}{p-1}}_{L^\infty(\Omega)}\right)+\frac{C({\nu_0})}{{\delta_x}^{\nu_0}}\left(\delta_x^{sp}\int_{\mathbb{R}^n\setminus\left(B_{\frac{\delta_x}{8}}(x)\right)}\frac{C|x-y|^{\nu_0(p-1)}}{|x-y|^{n+sp}}\mathrm{d}y\right)^{\frac{1}{p-1}}\\
	&\leq C({\nu_0})\left(1+\|f\|^{\frac{1}{p-1}}_{L^\infty(\Omega)}\right).
	\end{aligned}
	\end{equation}
	Now for $x,y\in V$ and without loss of generality, we assume $\delta_x\geq \delta_y.$
	\begin{itemize}
		\item When $\displaystyle |x-y|<\frac{\delta_x}{64},$ i.e. $\displaystyle y\in B_{\frac{\delta_x}{64}}(x)$, by \eqref{holder}, we have
		$$ \frac{|u(x)-u(y)|}{|x-y|^{\nu_0}}\leq C({\nu_0})\left(1+\|f\|^{\frac{1}{p-1}}_{L^\infty(\Omega)}\right).$$
		\item When $\displaystyle |x-y|\geq\frac{\delta_x}{64}\geq \frac{\delta_y}{64},$ we have $$ |u(x)-u(y)|\leq|u(x)|+|u(y)|\leq \displaystyle C(\delta_x^{\nu_0}+\delta_y^{\nu_0})\leq C|x-y|^{\nu_0}.$$
	\end{itemize}
	Hence our conclusion is $\displaystyle u\in C^{\nu_0}(\mathbb{R}^n)$
\end{proof}
\section{Appendix}
We list the numerical calculation results below. We refer the readers who are interested in the details to  \cite{zaizheng2020}, which is the Ph.D. thesis of the first author.
\begin{Proposition}[Numerical calculation]\label{fulu}
	Let $n=1,$ $s=\frac{1}{2},$ $ p=2, 4, 6, 8,$  $u(x)=(1-x^2)^s_{+}$. Then $(-\Delta)^s_p u(x)$ is uniformly bounded in $(-1,1)$. More precisely (omit constant $C_{n,s,p}$), we have
	\begin{equation*}
	\begin{aligned}
	(-\Delta)^{\frac{1}{2}} u(x)&\equiv2\arcsin 1\equiv\pi;\\
	(-\Delta)^{\frac{1}{2}}_4 u(x)&=3\sqrt{1-x^2}\left[\log (4-4x^2)-1\right]+6x\arcsin x;\\
	(-\Delta)^{\frac{1}{2}}_6 u(x)&=20\sqrt{1-x^2}\left[x\log\frac{1-x}{1+x}+2\right]+\frac{5\pi}{2}(8x^2-5);\\
	(-\Delta)^{\frac{1}{2}}_8 u(x)&=7\sqrt{1-x^2}\left[4(5x^2-2)\log(4-4x^2)\!+\!\frac{(-x^2+67)}{6}\right]\!+\!35x(8x^2-7)\arcsin x.
	\end{aligned}
	\end{equation*}
	In addition, $\displaystyle(-\Delta)^{\frac{1}{2}}_4 u(x)$, $\displaystyle(-\Delta)^{\frac{1}{2}}_6 u(x)$, $\displaystyle(-\Delta)^{\frac{1}{2}}_8 u(x)$  are strictly increasing in $(0,1)$ and 
	\bigskip $\displaystyle(-\Delta)^{\frac{1}{2}}_4 u(x)\in\left[6\log 2-3,3\pi\right)$,
	$\displaystyle(-\Delta)^{\frac{1}{2}}_6 u(x)\in\left[40-\frac{25\pi}{2},\frac{15\pi}{2}\right)$, $\displaystyle(-\Delta)^{\frac{1}{2}}_8 u(x)\in\left[\frac{469}{6}-112\log2, \frac{35\pi}{2}\right)$.
\end{Proposition}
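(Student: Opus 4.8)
\textbf{Proof proposal for Proposition \ref{fulu}.}
The plan is to derive the four closed-form expressions by a direct computation and then read off boundedness, monotonicity and the ranges from them. Fix $n=1$, $s=\tfrac12$ and an even integer $p\in\{2,4,6,8\}$, so that $n+sp=1+\tfrac p2$ and, crucially, $|u(x)-u(y)|^{p-2}\big(u(x)-u(y)\big)=\big(u(x)-u(y)\big)^{p-1}$ is a polynomial in $u(x),u(y)$. By Lemma \ref{duichen} it suffices to treat $x\in(0,1)$. I would split, using $u\equiv 0$ outside $(-1,1)$,
\[
(-\Delta)^{1/2}_p u(x)=u(x)^{p-1}\!\!\int_{|y|\ge 1}\!\!\frac{dy}{|x-y|^{1+p/2}}\;+\;\mathrm{P.V.}\!\int_{-1}^{1}\frac{\big(u(x)-u(y)\big)^{p-1}}{|x-y|^{1+p/2}}\,dy=:A(x)+B(x).
\]
The outer term is elementary: $A(x)=\tfrac2p\,u(x)^{p-1}\big[(1-x)^{-p/2}+(1+x)^{-p/2}\big]$.

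For $B(x)$ I would exploit the algebraic identity valid for $x,y\in(-1,1)$,
\[
u(x)-u(y)=\frac{u(x)^2-u(y)^2}{u(x)+u(y)}=\frac{(y-x)(x+y)}{u(x)+u(y)},
\]
which pulls out the factor $(y-x)$ and turns the integrand (for $p$ even) into $\operatorname{sgn}(y-x)\,|x-y|^{p/2-2}\,(x+y)^{p-1}\big(u(x)+u(y)\big)^{-(p-1)}$; in particular for $p\ge4$ there is no singularity at $y=x$, while for $p=2$ the principal value just kills the odd $1/(y-x)$ part. Expanding $\big(u(x)-u(y)\big)^{p-1}$ by the binomial theorem (or substituting $y=\sin\theta$, $x=\sin\varphi$) reduces $B(x)$ to a finite sum of elementary integrals — rational functions of $y$ against powers of $\sqrt{1-y^2}$, plus purely rational integrands over $|y|>1$ already contained in $A$ — whose primitives produce exactly the functions $\sqrt{1-x^2}$, $\arcsin x$, $\log(1\pm x)$; the apparently divergent pieces of $A$ and of the low-order binomial terms cancel, yielding the stated formulas. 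As a consistency check, each $\log$-term in the formulas is multiplied by $\sqrt{1-x^2}$, so the expressions extend continuously to $[-1,1]$; this re-proves Theorem \ref{thm 1} in this special case and recovers Getoor's constant $\pi$ for $p=2$.

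For the monotonicity and the ranges I would simply differentiate each closed form on $(0,1)$ and verify positivity of the derivative — for $p=4$ using $\arcsin x>x$ and $\sqrt{1-x^2}<1$, and for $p=6,8$ handling the negative term $\log\frac{1-x}{1+x}$ together with the polynomial prefactors $8x^2-5$, $8x^2-7$, $5x^2-2$. The endpoints follow by evaluation: at $x=0$ one gets $6\log2-3$, $40-\tfrac{25\pi}{2}$, $\tfrac{469}{6}-112\log2$ respectively, and as $x\to1^-$ all $\sqrt{1-x^2}$-terms vanish while $\arcsin x\to\tfrac\pi2$, giving the open right endpoints $3\pi$, $\tfrac{15\pi}{2}$, $\tfrac{35\pi}{2}$. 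The main obstacle is entirely computational: carrying out the principal-value bookkeeping correctly near $y=x$ and near $y=\pm1$, and then grinding through the elementary antiderivatives without arithmetic slips; a secondary nuisance is proving strict positivity of the derivatives \emph{uniformly} on $(0,1)$, not merely near the endpoints.
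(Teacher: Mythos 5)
The paper contains no in-text proof of this proposition: it is recorded as the outcome of a (symbolic/numerical) calculation, with all details deferred to the first author's thesis \cite{zaizheng2020}. So there is nothing in the paper to match your argument against step by step; what you propose is an explicit symbolic derivation, which is a genuinely different (and more self-contained) route. Your plan is sound: for even $p$ the kernel numerator is the polynomial $(u(x)-u(y))^{p-1}$, the exterior contribution $A(x)=\tfrac2p u(x)^{p-1}\left[(1-x)^{-p/2}+(1+x)^{-p/2}\right]$ is elementary, and either the factorization $u(x)-u(y)=(y-x)(x+y)/(u(x)+u(y))$ or the substitution $x=\sin\varphi$, $y=\sin\theta$ reduces the interior integral to elementary antiderivatives producing $\sqrt{1-x^2}$, $\arcsin x$ and $\log(1\pm x)$. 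The formulas you target are internally consistent with this: for instance, for $p=4$ a direct evaluation at $x=0$ gives $1+(6\log2-4)=6\log2-3$, matching the stated left endpoint, and as $x\to1^-$ the $\sqrt{1-x^2}$-terms vanish and the $\arcsin$-terms give $3\pi$, $\tfrac{15\pi}{2}$, $\tfrac{35\pi}{2}$, matching the stated (open) right endpoints. The caveat is that, as written, your text is a program rather than a proof: the actual principal-value bookkeeping and antiderivative computations for $p=4,6,8$, and in particular the \emph{uniform} strict positivity of the derivatives on $(0,1)$, are only announced. The monotonicity step is not free — e.g.\ for $p=6$ one finds $g'(x)=\tfrac{20(1-2x^2)}{\sqrt{1-x^2}}\log\tfrac{1-x}{1+x}-\tfrac{80x}{\sqrt{1-x^2}}+40\pi x$, whose positivity near $x=0$ rests on the thin margin $g'(x)\sim(40\pi-120)x$ — so this verification (and the analogous one for $p=8$) must actually be carried out before the range assertions $[6\log2-3,3\pi)$, $[40-\tfrac{25\pi}{2},\tfrac{15\pi}{2})$, $[\tfrac{469}{6}-112\log2,\tfrac{35\pi}{2})$ follow from the closed forms. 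With those computations executed, your approach would give a complete, purely analytic proof of a statement the paper itself only cites.
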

\section*{Conflict of interest statement}
The authors report no conflicts of interest. The authors are responsible for the content and writing of this article.
\section*{Acknowledgement}
The first author was supported by the CHINA SCHOLARSHIP COUNCIL. 
\bibliographystyle{plain}
\bibliography{wenxian}

\begin{thebibliography}{10}

\bibitem{alberti1998}
Giovanni {Alberti} and Giovanni {Bellettini}.
\newblock A nonlocal anisotropic model for phase transitions.
\newblock {\em Mathematische Annalen}, 310(3):527--560, 1998.

\bibitem{barb2009topics}
Simona Barb.
\newblock {\em Topics in geometric analysis with applications to partial
  differential equations}.
\newblock PhD thesis, University of Missouri--Columbia, 2009.

\bibitem{bjorland2012}
C.~{Bjorland}, Luis {Caffarelli}, and Alessio {Figalli}.
\newblock Non-local gradient dependent operators.
\newblock {\em Advances in Mathematics}, 230:1859–1894, 07 2012.

\bibitem{Bogdan_2010}
Krzysztof {Bogdan}, Tomasz {Grzywny}, and Micha{\l} {Ryznar}.
\newblock Heat kernel estimates for the fractional laplacian with dirichlet
  conditions.
\newblock {\em The Annals of Probability}, 38(5):1901--1923, sep 2010.

\bibitem{brasco2018higher}
Lorenzo Brasco, Erik Lindgren, and Armin Schikorra.
\newblock Higher h{\"o}lder regularity for the fractional p-laplacian in the
  superquadratic case.
\newblock {\em Advances in Mathematics}, 338:782--846, 2018.

\bibitem{caffarelli2007extension}
Luis Caffarelli and Luis Silvestre.
\newblock An extension problem related to the fractional laplacian.
\newblock {\em Communications in partial differential equations},
  32(8):1245--1260, 2007.

\bibitem{Chen2018Maximum}
Wenxiong Chen and Congming Li.
\newblock Maximum principles for the fractional p-laplacian and symmetry of
  solutions.
\newblock {\em Advances in Mathematics}, 335:735--758, 09 2018.

\bibitem{chen2017direct}
Wenxiong Chen, Congming Li, and Yan Li.
\newblock A direct method of moving planes for the fractional laplacian.
\newblock {\em Advances in Mathematics}, 308:404--437, 2017.

\bibitem{chen2018hopf}
Wenxiong Chen, Congming Li, and Shijie Qi.
\newblock A hopf lemma and regularity for fractional p-laplacians.
\newblock {\em Discrete and Continuous Dynamical Systems - A}, 2020.

\bibitem{chen2020book}
Wenxiong {Chen}, Yan {Li}, and Pei {Ma}.
\newblock {\em The fractional Laplacian}.
\newblock World Scientific Publishing Co., 2020.

\bibitem{Chen2017direct2}
Wenxiong Chen, Yan Li, and Ruobing Zhang.
\newblock A direct method of moving spheres on fractional order equations.
\newblock {\em Journal of Functional Analysis}, 272(10):4131--4157, 2017.

\bibitem{Chen2018symmetry}
Yonggang Chen and Baiyu Liu.
\newblock Symmetry and non-existence of solutions for fractional p-laplacian
  systems.
\newblock {\em Nonlinear Analysis}, 2018.

\bibitem{del2017hopf}
Leandro~M Del~Pezzo and Alexander Quaas.
\newblock A hopf's lemma and a strong minimum principle for the fractional
  p-laplacian.
\newblock {\em Journal of Differential Equations}, 263(1):765--778, 2017.

\bibitem{di2011hitchhiker}
Eleonora {Di Nezza}, Giampiero {Palatucci}, and Enrico {Valdinoci}.
\newblock Hitchhiker's guide to the fractional sobolev spaces.
\newblock {\em arXiv preprint arXiv:1104.4345}, 2011.

\bibitem{getoor1961first}
R.~K. Getoor.
\newblock First passage times for symmetric stable processes in space.
\newblock {\em Transactions of the American Mathematical Society},
  101(1):75--90, 1961.

\bibitem{iannizzotto2014global}
Antonio Iannizzotto, Sunra Mosconi, and Marco Squassina.
\newblock Global h{\"o}lder regularity for the fractional $ p $-laplacian.
\newblock {\em arXiv preprint arXiv:1411.2956}, 2014.

\bibitem{hitoshi2010}
Hitoshi {Ishii} and Gou {Nakamura}.
\newblock A class of integral equations and approximation of p-laplace
  equations.
\newblock {\em Calculus of Variations and Partial Differential Equations},
  37(3-4):485--522, 2010.

\bibitem{jin2019}
Lingyu Jin and Yan Li.
\newblock A hopf's lemma and the boundary regularity for the fractional
  p-laplacian.
\newblock {\em Discrete and Continuous Dynamical Systems - A}, 39:1477--1495,
  03 2019.

\bibitem{zaizheng2020}
Zaizheng {Li}.
\newblock {\em On some problems for fractional p-Laplacian operator and
  nonlinear elliptic systems}.
\newblock PhD thesis, AMSS, University of Chinese Academy of Sciences, 2020.

\bibitem{ros2014dirichlet}
Xavier Ros-Oton and Joaquim Serra.
\newblock The dirichlet problem for the fractional laplacian: regularity up to
  the boundary.
\newblock {\em Journal de Math{\'e}matiques Pures et Appliqu{\'e}es},
  101(3):275--302, 2014.

\bibitem{silvestre2007regularity}
Luis Silvestre.
\newblock Regularity of the obstacle problem for a fractional power of the
  laplace operator.
\newblock {\em Communications on Pure and Applied Mathematics: A Journal Issued
  by the Courant Institute of Mathematical Sciences}, 60(1):67--112, 2007.

\bibitem{Sire2008}
Yannick {Sire} and Enrico {Valdinoci}.
\newblock Fractional laplacian phase transitions and boundary reactions: A
  geometric inequality and a symmetry result.
\newblock {\em Journal of Functional Analysis}, 256:1842--1864, 02 2008.

\bibitem{chen2020}
Leyun {Wu} and Wenxiong {Chen}.
\newblock The sliding methods for the fractional p-laplacian.
\newblock {\em Advances in Mathematics}, 361:106933, 2020.

\end{thebibliography}
\nocite{*}




\end{document}